\documentclass{article}

\usepackage{amsmath,amssymb,amsfonts}
\usepackage{amsthm}
\usepackage{xspace,enumerate}
\usepackage{burdges}

\newcommand\oo{^\circ}

\def\margin#1{}

\begin{document}

\title{Weyl groups of small groups\\ of finite Morley rank} 
\author{%
Jeffrey Burdges\thanks{Supported by NSF postdoctoral fellowship DMS-0503036.}
\ and
Adrien Deloro}
\maketitle

\begin{abstract}
We examine Weyl groups of minimal connected simple groups of finite Morley rank of degenerate type. We show that they are cyclic, and lift isomorphically to subgroups of the ambient group.
\end{abstract}

\section{Introduction}

Infinite groups of finite Morley rank have little truly geometric
structure; however, their algebraic properties are remarkably
reminiscent of algebraic groups.  The strongest conjecture to this
effect is the {\em Cherlin-Zilber algebraicity conjecture} which
posits that an infinite simple group of finite Morley rank is
a linear algebraic group over an algebraically closed field.

There are a number of partial results towards this conjecture and
a complete proof in the {\em even \& mixed type} cases \cite{ABC_EvenType},
 read potentially characteristic two.
In other cases, much recent work has followed three themes :
 various identification theorems \cite{Bu_balance,Deloro_PSL2},  
 an analysis of the minimal simple groups
  where Bender's method is well understood \cite{Bu_Bender}, and
 the analysis of torsion using genericity arguments \cite{BBC,BC_semisimple},
 as well as work involving several techniques \cite{CJ01,BCJ,Deloro_NotPSL2,AB_CT}.

A major part of the combined thread is the analysis of
 the Weyl group $W = N(T)/C(T)$ of $G$ 
 associated to some maximal decent torus $T$.
Here a {\em decent torus} is merely the smallest definable subgroup
 containing some divisible abelian torsion subgroup,
 such as a $p$-torus $\Z(p^\infty)^n$.
One may speak of the Weyl group of $G$ because
 maximal decent tori are conjugate in a group of finite Morley rank \cite{Ch05}.

The main result of the present article is that the Weyl group of
 a minimal connected simple group of finite Morley rank is cyclic.
We also show that, for $p > 2$ dividing $|W|$,
 the group $G$ has no divisible $p$-torsion.

\begin{namedtheorem}{Theorem \ref{thm:Weyl_cyclic}}
Let $G$ be a minimal connected simple group of finite Morley rank,
 and let $T$ be a decent torus of $G$.
Then the Weyl group $W := N(T)/C(T)$ is cyclic,
 and has an isomorphic lifting to $G$;
 moreover no primes dividing $|W|$ appear in $T$ except possibly 2.
\end{namedtheorem}

\noindent In spirit, the proof proceeds by proving the three conclusions
 in the reverse order.

Our primary concern here is the case of groups of {\em degenerate type},
 where Sylow 2-subgroups are finite, or equivalently trivial \cite{BBC}.
The second author's thesis work covers the {\em odd type} case
 (see Fact \ref{DJ_odd_cyclic} below),
 where the Sylow 2-subgroup is divisible-abelian-by-finite,
and the afore mentioned classification in even \& mixed types \cite{ABC_EvenType}
 covers those cases.

In degenerate type, one may also extract some unlikely number theoretic
 consequences from this argument.

\begin{namedtheorem}{Corollary \ref{BW}*}
Let $G$ be a minimal connected simple group of finite Morley rank
 and degenerate type, and let $T$ be a nontrivial decent torus of $G$.
Then $G$ interprets a bad field in characteristic $p$
 for every prime divisor $p$ of $|W|$.
\end{namedtheorem}

A bad field $(k,H,+,\cdot)$ is a field $k$ of finite Morley rank
 with a proper nontrivial definable subgroup $H$ of it's multiplicative subgroup.
Such fields exist in characteristic zero \cite{BadField}
 but are quite unlikely in positive characteristic $p$ because then :
\begin{enumerate}
\item 
 there are only finitely many primes of
 the form $p^n-1 \over p-1$ by \cite{Wag}.
\item
$(p^n-1)_\pi \asymp p^{\alpha n}$ where 
 $\pi$ denotes the set of primes appearing
 in the bad field's multiplicative subgroup $H$,
 $(\cdot)_\pi$ denotes removing all primes but these,
 and $\alpha := \rk(H)/\rk(k)$ \cite{HruWag_Newton}.
\end{enumerate}
\noindent These conditions are regarded as unlikely by specialists.

As an exercise, we point out that Corollary \ref{BW} is quite easy
 when $p$ is the {\em minimal} prime divisor of $|W|$
 and $T$ is a maximal decent torus. 
First, Fact \ref{f:Weyl_p} provides a Borel subgroup $B$
 such that $U_p(B) \neq 1$. 
So one considers a $B$-minimal $A \leq U_p(B)$.
If $C_B(A) < B$ then Zilber's field theorem produces the bad field, as desired.
Otherwise $C_B(A) = B$ implies that $\cup B^G$ is generic in $G$.
So then, by \cite[Theorem 1]{BC_semisimple}, 
 $B$ contains a maximal decent torus of $G$, whose Weyl group normalizes $B$.
Finally the Weyl group element may not live either inside or outside $B$
 by Facts \ref{selfnormsolv} and \ref{unipotentselfnormalization}.

In \S\ref{sec:Literature} we expose minor adjustments of
 various known results, and recall some facts that will be  used.
\S\ref{sec:Cartan} is devoted to analyzing Cartan subgroups $C(T)$
 where $T$ is a maximal decent torus, as well as
 how Weyl cosets lift into elements of the group.
Theorem \ref{thm:Weyl_cyclic} is proved in \S\ref{sec:Cyclicity}. In \S\ref{sec:Counting_Borel} we conclude that under the assumption that maximal decent tori are large enough and there is a non-trivial Weyl group, only finitely many Borel subgroups can contain the Carter subgroup.

\subsection*{Acknowledgments}

The authors thank Tuna \Altinel, Alexandre Borovik, and Gregory Cherlin
for their helpful advice and corrections.  The authors also gratefully
acknowledge the hospitality and support of Universit\'e Paris 7,
University of Manchester, the Camerino Modnet meeting, and the Cherlin family.

\section{Literature}\label{sec:Literature}

This section serves as general background. However, the reader should be aware that later sections freely use standard material from \cite{BN} without any comment.

\subsection{Weyl Groups}

The Weyl group of an algebraic group is the quotient of the normalizer of a maximal algebraic torus by the centralizer of the same torus.
In general, groups of finite Morley rank need not contain any algebraic torus,
 but frequently have so-called decent tori.
A {\em decent torus} is a divisible abelian group which is the
 {\em definable hull} of its torsion,
i.e.\ is the intersection of all definable subgroups containing its torsion subgroup.
The group $W_T := N(T)/C(T)$ is the Weyl group
 associated to a given torus $T$.
Here one may use only the connected component $C^\o (T)$ of the centralizer
 by the following.

\begin{fact}[{\cite[Theorem 1]{AB_CT}}]\label{CTconnected}
Let $T$ be a decent torus of a connected group $H$ of finite Morley rank.
Then $C(T)$ is connected.
\end{fact}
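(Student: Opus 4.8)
The plan is to prove connectivity by reducing to torsion elements and then absorbing them into a connected decent torus. First I would record two reductions. Since $T = d(\operatorname{Tor}(T))$ and centralizing is a definable condition, $C(T) = C(\operatorname{Tor}(T)) = \bigcap_n C(T[n])$; the descending chain condition on definable subgroups then gives $C(T) = C(F)$ for a finite subgroup $F = T[N] \le T$. For connectivity it suffices to show that every torsion element of $C(T)$ lies in $C^\circ(T)$: given $x \in C(T)$, its definable hull $d(x) := d(\langle x\rangle)$ is abelian and centralizes $T$, so $d(x)^\circ \le C^\circ(T)$; writing $x^m \in d(x)^\circ$ for $m = |d(x)/d(x)^\circ|$ and using that the connected abelian group $d(x)^\circ$ is divisible, I can find $y \in d(x)^\circ$ with $y^m = x^m$, whence $xy^{-1}$ is torsion and congruent to $x$ modulo $C^\circ(T)$. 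Decomposing torsion into its primary parts further reduces the problem to a single $p$-element $x \in C(T)$.

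For such an $x$, the goal becomes showing that $x$ lies in a connected decent torus centralizing $T$; since decent tori are connected, membership in such a torus forces $x \in C^\circ(T)$. As $T$ is connected and centralizes $x$, we have $T \le C^\circ(x)$, so I would work inside a centralizer and invoke the conjugacy of maximal decent tori (Cherlin, \cite{Ch05}) together with the rigidity of decent tori, namely that a connected group acting definably on a decent torus acts trivially, so that $N^\circ(T) = C^\circ(T)$. A Frattini argument over the $C^\circ(x)$-conjugacy of its maximal decent tori lets me arrange, after adjusting $x$ by an element of $C^\circ(T)$, that $x$ normalizes a fixed maximal decent torus $\hat T \supseteq T$; rigidity then pins down the action of $x$ on $\hat T$, and an induction on $\rk H$, applied to $C^\circ(x)$ when $x$ is noncentral, closes the argument.

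The hard part will be this last absorption step: a disconnected $p$-element centralizing $T$ could a priori sit in a nonidentity coset of $C^\circ(T)$, and ruling this out is precisely the content of the theorem. Rigidity alone only yields that $N(T)/C(T)$ is finite and $N^\circ(T) = C^\circ(T)$, which is formally circular for connectivity, so the genuine input must be the torality of $p$-torsion relative to $T$, extracted from Cherlin's conjugacy theorem. A subtlety to watch is that the Frattini adjustment need not preserve $x$ being a $p$-element, so I expect to pass to definable hulls or to the relevant Sylow$^\circ$ subgroups, and to treat the central case $C(x) = H$ separately, where one needs that the torsion of the center is toral.
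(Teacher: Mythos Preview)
This statement appears in the paper only as a \emph{Fact}, quoted without proof from \cite[Theorem~1]{AB_CT}; the present paper supplies no argument for it, so there is nothing here to compare your attempt against directly.

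On the substance of your sketch: the preliminary reductions are all correct. Passing from $T$ to a finite truncation $T[N]$ via the descending chain condition, reducing modulo $C^\circ(T)$ to a torsion element using divisibility of $d(x)^\circ$, and then decomposing into $p$-primary parts are standard and valid moves. Your observation that $T\le C^\circ(x)$ is also correct, and in fact stronger than you use it: since $x$ centralises all of $C^\circ(x)$, it already centralises every maximal decent torus $\hat T$ of $C^\circ(x)$, so the Frattini adjustment you propose is unnecessary (and slightly misdirected, as it would adjust $x$ by an element of $C^\circ(x)$, not of $C^\circ(T)$).

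The genuine gap is the inductive closure. Applying the inductive hypothesis to $C^\circ(x)$ tells you that $C_{C^\circ(x)}(T)$ is connected, but this does not place $x$ inside $C_H^\circ(T)$: the element $x$ need not lie in $C^\circ(x)$ at all, so connectedness of a subgroup of $C^\circ(x)$ says nothing about the coset $xC_H^\circ(T)$. The same obstruction bites in the central case: for $x\in Z(H)$ a $p$-element, nothing you have written forces $x$ to be toral, and indeed central $p$-torsion need not lie in any decent torus in general. You correctly identify this absorption step as ``the hard part'', but the mechanism you gesture at (rigidity plus rank induction) is not sufficient on its own. The argument in \cite{AB_CT} instead exploits Cherlin's genericity result from \cite{Ch05}---that a generic element of a connected group carries a maximal decent torus in its definable hull---to run a genericity argument on the cosets of $C^\circ(T)$, rather than an induction on rank.
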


We naturally say ``the Weyl group \emph{of $G$}'' when $T$ is a maximal decent torus;
this is well-defined because
 maximal decent tori are always conjugate \cite[Last lines]{Ch05}.

In the present article, we use repeatedly the fact that
 connected solvable groups of finite Morley rank have trivial Weyl groups.
This fact is originally due to Oliver \Frecon,
 but we employ the following formulation.

\begin{fact}[{\cite[Lemma 6.6]{AB_CT}}]\label{selfnormsolv}
Let $H$ be a connected solvable group of finite Morley rank and
let $K$ be a definable connected subgroup of $H$
 such that $[N_H(K):K]<\infty$.
Then $N_H(K) = K$.
\end{fact}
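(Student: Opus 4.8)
The plan is to argue by induction on $\rk(H)$, after reducing to the case where the finite group $N_H(K)/K$ has prime order. If $N_H(K) \neq K$, choose a prime $p$ dividing $[N_H(K):K]$ and let $N$ be the preimage in $N_H(K)$ of a subgroup of order $p$ of $N_H(K)/K$; then $K = N^{\circ}$, $[N:K]=p$, and it suffices to contradict $N > K$. Fix a nontrivial $g \in N$ with $g^p \in K$. Since $H$ is connected solvable, it contains a minimal infinite definable connected $H$-invariant subgroup $A$, and such a subgroup is necessarily abelian.

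First I would dispose of the case $A \leq K$ by passing to the quotient. Here $A \trianglelefteq H$ and $A \le K$ give $N_{H/A}(K/A) = N_H(K)/A$, so $[\,N_{H/A}(K/A) : K/A\,] = p$ while $\rk(H/A) < \rk(H)$; the induction hypothesis yields $N_{H/A}(K/A) = K/A$, hence $N_H(K) = K$, a contradiction. So I may assume $A \not\leq K$. Applying the same quotient argument to $\bar H = H/A$ and $\bar K = KA/A$ (which is connected and satisfies $[\,N_{\bar H}(\bar K):\bar K\,]$ dividing $p$), induction forces $NA = KA$; thus $g \in KA$, and after multiplying by an element of $K$ I may take the nontrivial coset of $N/K$ to be represented by an element $a \in (A \cap N) \setminus K$ with $a^p \in A \cap K$.

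The heart of the matter is then the action of $K$ on the minimal subgroup $A$. The normalization relation gives the key constraint $[a,K] \le A \cap K$, that is, $a$ is fixed by $K$ modulo $A \cap K$. If $K$ centralizes $A$, then $A \le C_H(K) \le N_H(K)$; since $A \cap K < A$ with $A$ connected, the index $[A : A\cap K]$ is infinite, so $N_H(K)$ contains infinitely many cosets of $K$, contradicting $[N_H(K):K] = p$. Otherwise $K$ acts nontrivially on $A$, and Zilber's field theorem provides an interpretable algebraically closed field $k$ realizing $A$ as a $k$-line on which $K$ acts by scalar multiplication through a nontrivial character. On such a module $C_A(K) = 0$, which is incompatible with $a$ being a nonzero $K$-fixed point; this is the desired contradiction.

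The main obstacle is the last step when $A$ is minimal as an $H$-invariant subgroup but not as a $K$-invariant one, so that $A \cap K$ is a nontrivial proper $K$-submodule and $a$ is only fixed \emph{modulo} $A\cap K$. The clean field-theoretic contradiction then applies only to a suitable $K$-composition factor of $A$, and I expect to close the gap by an inner induction on $\rk(A)$ --- replacing $A$ by a $K$-minimal section and tracking how $a$ descends --- rather than by a single application of the field theorem. Organising this bookkeeping so that the two inductions, on $\rk(H)$ and on $\rk(A)$, interact correctly is the delicate part of the proof.
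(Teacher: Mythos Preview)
The paper does not prove this statement; it is quoted as a known fact from \cite{AB_CT}, originally due to Fr\'econ, so there is no proof here to compare against.

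Your proposal has an unacknowledged gap in the second reduction. When $A \not\leq K$ you pass to $\bar H = H/A$ with $\bar K = KA/A$ and assert that $[N_{\bar H}(\bar K):\bar K]$ divides $p$. But the preimage of $N_{\bar H}(\bar K)$ is $N_H(KA)$, not $N_H(K)\cdot A$, and knowing $[N_H(K):K]<\infty$ gives no direct bound on $[N_H(KA):KA]$: the normalizer of the larger group $KA$ may well be larger too. Showing that $KA$ is again almost self-normalizing is essentially the statement that an almost self-normalizing connected subgroup of a connected solvable group contains a Carter subgroup of the ambient group --- a fact in the same circle of ideas as the one you are proving, and whose usual proof runs through Fr\'econ's Carter theory rather than elementary induction. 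Without it you cannot arrange for the coset representative to lie in $A$, and the module-theoretic endgame never starts.

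The obstacle you do flag --- that $A$ is $H$-minimal but need not be $K$-minimal, so $a$ is only $K$-fixed modulo $A\cap K$ --- is also genuine, and the ``inner induction on $\rk(A)$'' you propose does not obviously close it. Once you pass to a $K$-composition factor $V$ of $A/(A\cap K)$ in which the image of $a$ is a nonzero fixed vector, the branch ``$K$ centralizes $V$'' no longer produces infinitely many cosets of $K$ in $N_H(K)$, because elements of the \emph{section} $V$ need not lift to elements of $A$ normalizing $K$. So even granting the first reduction, the dichotomy at the heart of your argument breaks down.
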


A priori, if $T$ is a decent subtorus of another torus $S$, then its Weyl group $W_T$ is just a section of $W_S$. But in the special case of minimal connected simple groups, one can say more thanks to the preceding fact.

\begin{lemma}\label{lem:Weyl_increase}
Let $G$ be a minimal connected simple group of finite Morley rank,
 and let $T$ be a nontrivial decent torus of $G$.
Then the Weyl group $W_T = N(T)/C(T)$ associated to $T$
 is naturally isomorphic to
 a subgroup of the Weyl group $W$ of $G$.
\end{lemma}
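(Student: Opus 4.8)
The plan is to fix a maximal decent torus $S\ge T$ and embed $W_T$ into $W=N(S)/C(S)$ by pushing each element of $N(T)$ into $N(S)$, the key point being that the ambiguity of this push is governed by the Weyl group of the connected solvable group $C(T)$, which is trivial. For the setup: since $T$ is nontrivial and $Z(G)=1$, the centralizer $C(T)$ is proper, hence solvable (as $G$ is minimal connected simple), and connected by Fact~\ref{CTconnected}. As $S$ is abelian with $T\le S$ we have $S\le C(T)$, and any decent torus of $C(T)$ containing $S$ is a decent torus of $G$ above the maximal $S$, so $S$ is a maximal decent torus of $C(T)$. Moreover $C(S)\le C(T)$, so $C_{C(T)}(S)=C(S)$, which is connected again by Fact~\ref{CTconnected}.

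Each $n\in N(T)$ normalizes $C(T)$, since $nC(T)n^{-1}=C(nTn^{-1})=C(T)$, so $S^{n}$ is again a maximal decent torus of $C(T)$; by conjugacy of maximal decent tori inside the connected group $C(T)$ \cite{Ch05} there is $c\in C(T)$ with $nc\in N(S)$. I would then define a map $W_T\to W$ sending the class $nC(T)$ to the class $(nc)C(S)$. That this respects multiplication is a direct conjugation check: for $n_1c_1,n_2c_2\in N(S)$ with $c_i\in C(T)$ one has $n_1c_1n_2c_2=n_1n_2\bigl((n_2^{-1}c_1n_2)c_2\bigr)$, whose bracketed factor lies in $C(T)$, so $n_1c_1n_2c_2$ is a legitimate push of $n_1n_2$ into $N(S)$.

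The crux is well-definedness. If $nc,nc'\in N(S)$ with $c,c'\in C(T)$, then $c^{-1}c'\in N(S)\cap C(T)=N_{C(T)}(S)$, so both independence of the map from the choice of $c$ and from the representative $n$ reduce to the single identity $N_{C(T)}(S)=C(S)$, that is, to the triviality of the Weyl group of the connected solvable group $C(T)$. This is the step I expect to be the real obstacle, and precisely where I would invoke Fact~\ref{selfnormsolv}. Since $S$ is central and maximal in $C(S)=C_{C(T)}(S)$, it is the unique, hence characteristic, maximal decent torus of $C(S)$; this forces $N_{C(T)}(C(S))=N_{C(T)}(S)$, which has finite index over $C(S)$ because Weyl groups are finite. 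Applying Fact~\ref{selfnormsolv} to $K=C(S)$ inside $C(T)$ then gives $N_{C(T)}(C(S))=C(S)$, whence $N_{C(T)}(S)=C(S)$ as needed. Injectivity follows immediately: if $(nc)C(S)$ is trivial then $nc\in C(S)\le C(T)$, so $n\in C(T)$ and the class $nC(T)$ was already trivial in $W_T$.
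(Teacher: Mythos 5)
Your proof is correct and is essentially the paper's own argument: the paper packages your explicit push of $N(T)$ into $N(S)$ as a Frattini argument ($N(T) = C(T)\cdot N_{N(T)}(S)$, via conjugacy of maximal decent tori inside $C(T)$) and then reads off $W_T \cong N_{N(T)}(S)/N_{C(T)}(S)$, with the same crux $N_{C(T)}(S) = C_{C(T)}(S) = C(S)$ supplied by Fact~\ref{selfnormsolv}. Your spelled-out application of that fact --- passing to $K = C(S)$ via the characteristic maximal decent torus $S$ and finiteness of Weyl groups --- is precisely the detail the paper leaves implicit, so the two arguments coincide.
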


\begin{proof}
Let $S$ be a maximal decent torus containing $T$, so that $W = W_S = N(S)/C(S)$.
Then $S \leq C(T) \normal N(T)$.
A Frattini argument using conjugacy of maximal decent tori
 says that $N(T) = C(T) \cdot N_{N(T)} (S)$ and
$$ W_T = N(T)/C(T) = N_{N(T)} (S)/ N_{C(T)} (S) \mathperiod $$
Of course $C(T)$ is solvable by minimal simplicity of $G$
 and connected by Fact \ref{CTconnected}.
So Fact \ref{selfnormsolv} says that $N_{C(T)} (S) = C_{C(T)} (S) = C(S)$, 
which implies that
\[ W_T = N_{N(T)}(S) / C(S) \leq N(S) / C(S) = W_S = W. \qedhere \]
\end{proof}

A similar argument shows that, if $G$ is a minimal connected simple group
 containing a non-trivial decent torus,
 the Weyl group of $G$ is isomorphic to $W_Q := N(Q)/Q$
 where $Q$ is a Carter subgroup containing the maximal decent torus $T$.
Indeed one need not specify the Carter subgroup $Q$
 as they are all conjugate \cite{Frecon_Carter_conjugacy}.

\subsection{Connected torsion}

We shall employ numerous results about connected torsion subgroups.
A connected solvable $p$-subgroup of a group of finite Morley rank
is always a central product of a $p$-torus,
 i.e.\ a power of the \Prufer $p$-group $\Z(p^\infty)$,
 and a $p$-unipotent subgroup,
 i.e.\ a definable connected nilpotent $p$-group of bounded exponent.
A number of recent results provide criteria either
 forcing unipotence to appear or preventing it from occurring.

\begin{fact}[{\cite[Theorem 3]{BC_semisimple}}]\label{f:torality}
Let $G$ be a connected group of finite Morley rank, $\pi$ a set of primes,
and $a$ any $\pi$-element of $G$ such that $C_G\oo(a)$ has $\pi^\perp$ type.
Then $a$ belongs to a $\pi$-torus.
\end{fact}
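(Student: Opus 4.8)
The plan is to induct on $\rk G$, at each stage enlarging a $\pi$-torus that $a$ centralizes and using the connectedness of centralizers of decent tori (Fact \ref{CTconnected}) to drag $a$ itself into a connected component. Among the $\pi$-tori $T$ of $G$ centralized by $a$, I would pick a maximal one, $T_0$. Since a $\pi$-torus is a decent torus, Fact \ref{CTconnected} says $C_G(T_0)$ is connected; as $a$ centralizes $T_0$ this produces the key inclusion $a \in C_G(T_0) = C_G\oo(T_0) =: M$, inside which $T_0$ is central. A priori $a$ need not lie in $C_G\oo(a)$, so this move is exactly what makes a torsion element accessible to connected-group arguments.

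If $T_0 \neq 1$ I would descend. The quotient $\bar M := M/T_0$ is connected of rank $\rk M - \rk T_0 < \rk G$, and carries the $\pi$-element $\bar a$. The hypothesis is inherited along the way: $C_M\oo(a) = \bigl(C_G(T_0) \cap C_G(a)\bigr)\oo \le C_G\oo(a)$ has $\pi^\perp$ type, and one checks the same for $C_{\bar M}\oo(\bar a)$, so induction toralizes $\bar a$ in $\bar M$. Lifting the resulting $\pi$-torus back through the central extension by $T_0$ and absorbing the finite part of $a$ — here again the inherited $\pi^\perp$ hypothesis, forbidding stray $\pi$-unipotence, is what lets $a$ itself rather than merely a power of $a$ land in the torus — returns a $\pi$-torus of $M$, hence of $G$, containing $a$.

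The whole difficulty concentrates in the base case $T_0 = 1$, where $a$ centralizes no nontrivial $\pi$-torus and the reduction is vacuous ($M = G$); here I must show directly that a nontrivial $\pi$-element with $\pi^\perp$-type connected centralizer and no toral room cannot exist. This is where a genericity argument enters, in the spirit of Cherlin's conjugacy of maximal decent tori \cite{Ch05}: fixing a maximal decent torus $S$ with its connected, generous centralizer $C_G(S)$ (whose conjugates cover a generic subset of $G$), one would compare the rank of the conjugates of $a$ meeting the centralizers of conjugates of $S$ against the rank of $a^G$, the $\pi^\perp$ hypothesis forcing every such conjugate to be toral and thereby producing a rank deficiency incompatible with $a$ being non-toral. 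Converting the mere absence of $\pi$-unipotence in $C_G\oo(a)$ into this strict rank inequality is, I expect, the main obstacle; by contrast the torus-growing reduction and the central-extension lifting are essentially formal once Fact \ref{CTconnected} is in hand.
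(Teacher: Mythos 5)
First, a point of order: the paper does not prove this statement at all --- it is imported as a black box, Fact \ref{f:torality}, citing \cite[Theorem 3]{BC_semisimple} --- so there is no internal proof to compare yours against; your attempt has to be measured against the actual content of that theorem. Your torus-growing reduction is, in itself, sound architecture. Taking $T_0$ maximal among $\pi$-tori centralized by $a$, Fact \ref{CTconnected} does put $a$ inside the connected group $M = C_G(T_0)$ with $T_0$ central; the $\pi^\perp$-type hypothesis does pass to $M/T_0$ (if $\bar U$ were $p$-unipotent in $C_{M/T_0}\oo(\bar a)$ for $p \in \pi$, a pullback argument yields a $p$-unipotent $B \leq M$ with $[a,B] \leq T_0$, and since $T_0$ is central the map $x \mapsto [a,x]$ is a homomorphism from $B$ into $T_0$ whose image is connected of bounded exponent, hence trivial, so $B \leq C_G\oo(a)$, a contradiction); and the lifting works because the full preimage in $M$ of a $\pi$-torus of $M/T_0$ is connected, $\pi$-torsion, divisible and nilpotent of class at most $2$, hence abelian (torsion in a divisible nilpotent group of finite Morley rank is central), i.e.\ is itself a $\pi$-torus containing $a$, not merely a power of $a$.

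The genuine gap is that the case $T_0 = 1$, which you defer, is not a residual base case: by your own reduction it is \emph{equivalent} to the theorem, and it already contains the statement that a connected group of finite Morley rank with no $\pi$-unipotence and no nontrivial $\pi$-torus has no $\pi$-torsion --- for $\pi = \{2\}$ this is the main theorem of \cite{BBC}, an entire paper of hard genericity analysis, and for odd primes it is precisely one of the principal results of \cite{BC_semisimple}. Your sketch for this case does not survive scrutiny on two counts. First, it is circular: you propose that ``the $\pi^\perp$ hypothesis forces every conjugate of $a$ meeting $C(S^g)$ to be toral,'' but elements of $C(S)$, $S$ a maximal decent torus, need not be toral a priori (e.g.\ $C(S)$ may well contain $\pi$-unipotent subgroups in general connected groups); torality of such elements is exactly what Theorem 3 asserts. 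Second, the rank comparison has no teeth: $a^G$ is a single conjugacy class, in general far from generic, so nothing forces it to meet the generous set $\bigcup_g C\oo(S)^g$, and no rank deficiency arises from its possible disjointness from that set. The actual proof in \cite{BC_semisimple} rests on their domination theorem for centralizers of maximal decent tori together with the machinery of \cite{BBC}, and cannot be replaced by a paragraph of counting; so while your reduction is correct and even elegant, the entire weight of the theorem remains unproved.
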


\begin{fact}[{\cite[Corollary 5.3]{BC_semisimple}}]\label{f:Weyl_p}
Let $G$ be a minimal connected simple group of finite Morley rank.
Suppose the Weyl group is nontrivial and has odd order,
 with $r$ the smallest prime divisor of its order.
Then $G$ contains a unipotent $r$-subgroup in the
 centralizer of any $r$-element representing an $r$-element of $W$.
\end{fact}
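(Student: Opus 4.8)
The plan is to argue by contraposition. I fix an $r$-element $a$ of $G$ projecting to an element $w$ of order $r$ in $W=N(T)/C(T)$, so that $a\in N(T)\setminus C(T)$, and I assume that $C\oo(a)$ contains no nontrivial unipotent $r$-subgroup, i.e.\ that $C\oo(a)$ has $\{r\}^\perp$ type; the goal is to contradict $w\neq 1$. The basic structural observation is that $C\oo(a)$ is a \emph{proper} subgroup of $G$ — otherwise $a$ would be central, hence trivial by simplicity — so by minimal simplicity $C\oo(a)$ is connected solvable. This solvability is the feature the argument exploits, through Facts \ref{CTconnected} and \ref{selfnormsolv}.

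First I would feed the assumption into the torality principle. Applying Fact \ref{f:torality} with $\pi=\{r\}$ to the $r$-element $a$, the hypothesis that $C\oo(a)$ has $\{r\}^\perp$ type yields that $a$ lies in an $r$-torus $S$. Since $S$ is connected abelian and contains $a$, we get $S\le C\oo(a)$ and $a\in Z(C\oo(a))$; choosing a maximal decent torus $D$ of $C\oo(a)$ with $S\le D$, I would then check that $D$ is in fact maximal decent already in $G$: any maximal decent torus $D^{*}\ge D$ of $G$ centralises $a\in D$ and so lands in $C\oo(a)$, forcing $D^{*}=D$. Thus the toral element $a$ is a semisimple representative of $w$ sitting inside a full maximal decent torus $D$, conjugate to $T$ by conjugacy of maximal decent tori.

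The contradiction is then to be extracted from genericity, and this is where I expect the real difficulty to lie. A toral representative of the nontrivial coset $w$ should make the conjugates of the Cartan subgroup $C\oo(T)$ sweep out a generic subset of $G$, that is, force $T$ to be generous; establishing this rests on the rank/counting bookkeeping of \cite{BC_semisimple}, and it is exactly here that the minimality of $r$ among the prime divisors of $|W|$ enters, ensuring the generic contribution of the coset $w$ is not absorbed by smaller-order torsion. Granting generosity, I would conclude as in the Introduction's exercise: by \cite[Theorem 1]{BC_semisimple} a Borel subgroup $B\supseteq C\oo(T)$ would then have $\bigcup B^{G}$ generic and contain a maximal decent torus, whose nontrivial Weyl element normalises $B$; but such an element can lie neither inside $B$ nor outside it by Fact \ref{selfnormsolv} together with the companion self-normalisation of unipotent subgroups, so $W$ would be trivial, contradicting $w\neq 1$. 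Hence $C\oo(a)$ must after all contain a nontrivial unipotent $r$-subgroup.
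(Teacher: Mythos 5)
A preliminary remark: the paper does not prove this statement at all — it is imported as a Fact from \cite[Corollary 5.3]{BC_semisimple} — so there is no internal proof to compare yours against; your attempt can only be judged on its own terms, and on those terms it has genuine gaps.

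The decisive gap is that the mathematical core of the result is deferred rather than proved. After the (correct) reduction via Fact \ref{f:torality} to the case where the representative $a$ is toral, you write that the contradiction ``rests on the rank/counting bookkeeping of \cite{BC_semisimple}'' and then proceed ``granting generosity''. But the assertion that an $r$-element of $W$, for $r$ the smallest prime divisor of the odd number $|W|$, admits no toral representative \emph{is} the substance of \cite[Corollary 5.3]{BC_semisimple}; appealing to that paper's bookkeeping here is circular, and tellingly neither the oddness of $|W|$ nor the minimality of $r$ is actually used anywhere in your text. The endgame you sketch also fails on its own terms. To conclude ``as in the Introduction's exercise'' you need a nontrivial Weyl representative to \emph{normalize} your Borel subgroup $B \supseteq C\oo(T)$, and you need $U_p(B) \neq 1$ in order to invoke Fact \ref{unipotentselfnormalization}; in the exercise both of these come from Fact \ref{f:Weyl_p} itself, which supplies the unipotent radical and hence, via Fact \ref{jaligot}, the disjointness of $B$ from its conjugates that forces the normalization. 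Under your contrapositive hypothesis neither ingredient is available, and the normalization assumption is not merely unjustified but typically false: the paper's final corollary in \S\ref{sec:Counting_Borel} shows, via Proposition \ref{BTnilp}, that in the degenerate-type setting a nontrivial lifted Weyl element fixes a Borel subgroup containing the Cartan subgroup only when that Borel \emph{equals} the Cartan subgroup, the action being free otherwise.

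A second, smaller gap is the ``i.e.''\ in your first paragraph: ``$C\oo(a)$ contains no nontrivial unipotent $r$-subgroup'' is strictly weaker than ``$C\oo(a)$ has $\{r\}^\perp$ type'', which also excludes $r$-tori and is what Fact \ref{f:torality} requires. The missing case can be patched with tools already in the paper: if $C\oo(a)$ contains a nontrivial $r$-torus $S_0$, then $a$ lies in $C(S_0)$, which is connected (Fact \ref{CTconnected}) and solvable by minimal simplicity, so its Sylow $r$-subgroups are connected \cite{BN}; writing the one containing $a$ as a central product $T_r * U_r$, either $U_r \neq 1$, in which case $C_{U_r}(a)$ is nontrivial (a nontrivial normal subgroup of a nilpotent group meets its center) and infinite by Lemma \ref{CUpBt_inf}, so that $C\oo(a)$ contains a unipotent $r$-subgroup after all and the desired conclusion holds; or $U_r = 1$ and $a$ is toral, rejoining your main line. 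As written, however, your reduction to torality is incomplete, and the argument after that reduction is absent.
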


\begin{fact}[{\cite[Lemma 4.3]{AB_CT}}]\label{unipotentselfnormalization}
Let $G$ be a minimal connected simple group of finite Morley rank.
Let $B$ be a Borel subgroup of $G$ such that $U_p(B) \neq 1$
 for some prime $p$. Then $p\not|[N_G(B):B]$.
\end{fact}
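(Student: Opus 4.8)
The plan is to show first that $[N_G(B):B]$ is finite and then that the prime $p$ cannot divide it. For finiteness I would argue that $N_G\oo(B) = B$: the connected definable group $N_G\oo(B)$ is a \emph{proper} subgroup of $G$ (otherwise $B \trianglelefteq G$, contradicting simplicity), hence solvable by minimal simplicity, and it contains the connected group $B$; maximality of the Borel subgroup $B$ then forces $N_G\oo(B) = B$, so that $N_G(B)/B = N_G(B)/N_G\oo(B)$ is finite. Now suppose toward a contradiction that $p$ divides $|N_G(B)/B|$. Since this quotient is finite, Cauchy's theorem gives an element of order $p$ in it, and as torsion lifts through the connected normal subgroup $B$ one obtains an honest $p$-element $x \in N_G(B) \setminus B$.

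Writing $U := U_p(B) \neq 1$, which is characteristic in $B$ and so normalized by $x$, I would next extract fixed points. The group $U\langle x\rangle$ is a nontrivial locally finite $p$-group in which $U$ is normal, so standard fixed-point theory for a finite $p$-group acting on a nontrivial $p$-unipotent group yields $C_U(x) \neq 1$; passing to the connected component, $A := C_U\oo(x)$ is a nontrivial $p$-unipotent subgroup centralized by $x$. In particular $A \leq C_G\oo(x)$, so $C_G\oo(x)$ carries $p$-unipotence; by the contrapositive of Fact \ref{f:torality} the element $x$ is therefore \emph{not} merely toral, and the heart of the matter is to show that such an $x$ is forced to lie inside $B$.

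To force $x \in B$ I would play minimal simplicity against the maximality of $B$. The intended mechanism is to produce either a connected $p$-unipotent subgroup strictly larger than $U_p(B)$ inside a single Borel, or a connected solvable overgroup of $B$. Concretely, I would first use a Frattini argument together with the conjugacy of the Carter subgroups of the connected solvable group $B$ to replace $x$ by a $B$-translate normalizing a fixed Carter subgroup $Q$ of $B$ (still outside $B$); the decent torus of $B$ lies in $Q$, and Fact \ref{selfnormsolv} applied inside $B$ constrains how $x$ may act on $Q$ and on its torus. Combining this with the nontrivial fixed unipotent $A$, I would pass to $C_G\oo(A)$, a proper and hence solvable connected subgroup normalized by $x$, and compare the Borel containing it with $B$.

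The main obstacle is precisely this last step: disentangling the unipotent and the toral behaviour of $x$. The local $p$-group theory of $U\langle x\rangle$ is perfectly consistent in isolation, so the contradiction must come from the global geometry of $G$ --- the conjugacy of maximal decent tori and the maximality of Borel subgroups --- rather than from any purely local computation. I expect the delicate point to be ruling out the case where $x$ carries a nontrivial toral component that happens to centralize $A$: there the naive overgroup constructions stall, and one must instead play the action of $x$ on the full Sylow $p$-structure $U_p(B) \ast D$ of $B$ (with $D$ the $p$-torus of $B$, central in that Sylow) against torality, showing that the toral part of $x$ lands in $D \leq B$ while its unipotent part lands in $U_p(B) \leq B$, whence $x \in B$, the desired contradiction.
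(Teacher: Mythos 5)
You should first be aware that the paper does not prove this statement at all: it is imported from the literature as \cite[Lemma 4.3]{AB_CT}, so there is no internal proof to compare with, and your attempt must stand on its own. Its first half does stand: the reduction $N_G^\circ(B)=B$ (hence finiteness of the index), the lifting of a hypothetical order-$p$ coset to a genuine $p$-element $x\in N_G(B)\setminus B$ via Fact \ref{f:nodeftorsion}, and the nontrivial connected fixed-point group $A=C^\circ_{U_p(B)}(x)$ are correct and standard. One small but real slip along the way: Fact \ref{f:torality} says that if $C^\circ_G(x)$ has $p^\perp$ type then $x$ is toral; from ``$C^\circ_G(x)$ carries $p$-unipotence'' you infer ``$x$ is not toral'', which is the converse of that implication, not its contrapositive, so the inference is invalid. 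It also points you the wrong way: in a correct argument the toral case is the \emph{easy} one to kill, since if $x$ lay in a $p$-torus $S$ then $x\in d(S)\leq C^\circ_G(x)$, and one would only need $C^\circ_G(x)\leq B$ to conclude $x\in B$.

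The genuine gap is the step you yourself flag as ``the heart of the matter'': forcing $x\in B$. Everything you propose for it either re-derives known information or presupposes the conclusion. The route through $C^\circ_G(A)$ together with Fact \ref{jaligot} can indeed be pushed through to show that $B$ is the \emph{unique} Borel subgroup containing $C^\circ_G(x)$ (for any Borel $B'\geq C^\circ_G(x)$ one has $1\neq A\leq U_p(B')\cap F(B)$, whence $F(B)\cap F(B')\neq 1$ and $B'=B$), but that only reproves that $x$ normalizes $B$. The missing implication, ``$C^\circ_G(x)\leq B$ and $x$ torsion imply $x\in B$'', is precisely Fact \ref{BCx}; in the source that is Corollary 4.4, deduced \emph{from} the Lemma 4.3 you are trying to prove, so invoking it would be circular, and you supply no substitute for it. Your closing picture --- the ``toral part'' of $x$ landing in a $p$-torus $D\leq B$ and the ``unipotent part'' in $U_p(B)$ --- presupposes that $x$ lies in a connected $p$-subgroup of the form $U_p(B)\ast D$, i.e.\ essentially that $x\in B$ already: an abstract $p$-element of a group of finite Morley rank carries no Jordan-type decomposition into commuting toral and unipotent factors unless it already sits inside such a connected $p$-group, which is exactly what has to be proved. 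So the proposal assembles the standard frame correctly, but the decisive argument is absent.
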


The preceding facts can be used to prove that
 minimal connected simple groups are covered by their Borel subgroups.

\begin{fact}[{\cite[Corollary 4.4]{AB_CT}}]\label{BCx}
Let $G$ be a minimal connected simple group of finite Morley rank.
Any torsion element $x$ of $G$ lies inside
 any Borel subgroup of $G$ which contains $C^\o(x)$.
\end{fact}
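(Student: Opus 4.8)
The plan is to run everything off the torality criterion (Fact~\ref{f:torality}), with the genuinely toral behaviour of $x$ as the main case and the remaining case pushed onto the self-normalization facts. We may assume $x \neq 1$, so that $C\oo(x)$ is a proper definable connected subgroup and hence solvable by minimal simplicity; fix a Borel $B$ with $C\oo(x) \le B$. Two remarks set the stage. Since $C\oo(x)$ is connected and contained in $C_G(x)$, it is the connected component $C_B\oo(x)$ of the fixed points of $x$ acting on $B$; and since $x$ centralizes $C\oo(x)$, conjugation by $x$ fixes $C\oo(x)$ pointwise, so $x$ permutes the Borel subgroups containing $C\oo(x)$, among them $B$.

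The heart of the argument is the toral case. Let $\pi$ be the set of primes dividing the order of $x$ and suppose $C\oo(x)$ has $\pi^\perp$ type. Then Fact~\ref{f:torality} places $x$ in some $\pi$-torus $T_0$; as $T_0$ is abelian and contains $x$ we get $T_0 \le C_G(x)$, and connectedness of $T_0$ forces $T_0 \le C\oo(x) \le B$, whence $x \in B$. This also disposes of the possibility $C\oo(x)=1$, which would be vacuously $\pi^\perp$ and would give $x \in T_0 \le C\oo(x) = 1$; so from now on $C\oo(x)\neq 1$.

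It remains to treat the case where $C\oo(x)$ carries nontrivial connected $p$-torsion for some $p\in\pi$. If this torsion is a $p$-torus, I would place $x$ in $B$ by a Frattini argument over a maximal decent torus, exactly as in the proof of Lemma~\ref{lem:Weyl_increase} and using Fact~\ref{selfnormsolv}. If instead $U_p(C\oo(x))\neq 1$, then $U_p(B)\neq 1$, and the strategy is to play $x\notin B$ against Fact~\ref{unipotentselfnormalization}: maximality of $B$ in the minimal simple group $G$ gives $N_G(B)\oo=B$ (a connected overgroup of a Borel is solvable, hence a Borel), so $N_G(B)/B$ is finite, while Fact~\ref{unipotentselfnormalization} forbids $p$ from dividing its order. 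Provided $x$ normalizes $B$, a nontrivial $p$-part in the image of $x$ in $N_G(B)/B$ would then contradict Fact~\ref{unipotentselfnormalization}, forcing $x\in B$.

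The main obstacle is exactly the proviso in the last step: that $x$ normalizes $B$, i.e.\ that the finite set of Borels over $C\oo(x)$ which $x$ permutes is controlled enough to pin $B$ down, and that the torality information at the various primes of $\pi$ can be concentrated at a single prime $p$ with $U_p(B)\neq1$. The difficulty is that Fact~\ref{f:torality} is sensitive to the connected centralizer of the \emph{exact} element, and this hypothesis is not inherited by the primary components of $x$, whose centralizers are in general strictly larger than $C\oo(x)$; so a naive primary decomposition does not localize the problem at one prime. I expect the real work to lie in organizing this descent, after which Facts~\ref{unipotentselfnormalization} and~\ref{selfnormsolv} close the argument.
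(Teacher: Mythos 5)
This statement is imported by the paper as a fact, cited from \cite[Corollary~4.4]{AB_CT}, so there is no internal proof to compare against; judged on its own merits, your proposal has genuine gaps, which your final paragraph concedes. What works is the toral case: if $C^\o(x)$ has $\pi^\perp$ type, Fact~\ref{f:torality} gives $x \in T_0$ for some $\pi$-torus $T_0$, and then $T_0 \leq C^\o(x) \leq B$ --- though the reason is divisibility of $T_0$ (its image in the finite group $C(x)/C^\o(x)$ must vanish) rather than ``connectedness'', since $T_0$ need not be definable. The other two branches are not proofs. In the unipotent branch, even granting that $x$ normalizes $B$, Fact~\ref{unipotentselfnormalization} excludes only $p$-torsion from $N_G(B)/B$, for the particular prime $p$ with $U_p(B) \neq 1$; so it places only the $p$-primary part of $x$ in $B$, and your phrase ``forcing $x \in B$'' is valid only when $x$ is a $p$-element. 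For the other primes $q$ dividing $|x|$ there is no leverage: $U_q(B)$ may be trivial, and $C^\o(x_q)$ is in general strictly larger than $C^\o(x)$ and need not lie in $B$, so no recursion on primary parts even starts --- you name this difficulty but do not resolve it. In the torus branch there is no argument at all: the Frattini argument of Lemma~\ref{lem:Weyl_increase} compares two Weyl groups, and transplanted here it only produces some element of the coset $x\,C^\o(x)$ normalizing a maximal decent torus of $C^\o(x)$; that element need not be torsion, and its connected centralizer is not known to lie in $B$, so the original problem reappears unchanged. This branch cannot be dodged: an $x$ of order $pq$ whose connected centralizer contains a $p$-torus but no nontrivial unipotent subgroup falls through both of your other cases.

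Part of this is repairable with tools the paper already quotes. If $U_p(C^\o(x)) \neq 1$, then for \emph{every} Borel $B' \supseteq C^\o(x)$ one has $1 \neq U_p(C^\o(x)) \leq U_p(B') \leq F(B')$, so Fact~\ref{jaligot} forces $B$ to be the \emph{unique} Borel containing $C^\o(x)$; since $x$ centralizes $C^\o(x)$ and hence permutes the Borels containing it, $x \in N_G(B)$, which settles your normalization proviso. The multi-prime issue then yields to induction on $|x|$: write $x = x_p y$ with $y$ the $p'$-part; since $y$ is a power of $x$, $U_p(C^\o(x)) \leq C^\o(y)$, so the same Fact~\ref{jaligot} argument shows $B$ is also the unique Borel containing $C^\o(y)$, and induction gives $y \in B$ while Fact~\ref{unipotentselfnormalization} gives $x_p \in B$. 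What remains, and what your sketch does not reach, is exactly the case where $C^\o(x)$ has no $\pi$-unipotent subgroup but carries a nontrivial $q$-torus for some $q$ dividing $|x|$: there neither Fact~\ref{f:torality} nor Fact~\ref{jaligot} applies, and this is where the cited source's own machinery (connectedness of centralizers of decent tori, as in Fact~\ref{CTconnected}, together with genericity arguments) does the real work. That is the missing idea.
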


We will make crucial use of the following striking consequence.

\begin{lemma}\label{lem:constant_order}
Let $G$ be a minimal connected simple group of finite Morley rank,
 let $T$ be a nontrivial maximal torus, and let $\bar{x} \in N(T)/C(T)$.
Then any lifting of $\bar{x}$ to a torsion element $x \in N(T) \setminus C(T)$
 has the same order as $\bar{x}$.
In other words $\gen{x} \cap C(T) = 1$ whenever $x \in N(T)\setminus C(T)$ is torsion.
\end{lemma}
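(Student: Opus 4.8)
The plan is to argue by contradiction: I assume there is a nontrivial element $g \in \gen{x} \cap C(T)$ and build a single Borel subgroup containing both $x$ and $T$. This is impossible, since $x$ normalizes $T$ without centralizing it and the Weyl group of a connected solvable group is trivial. So fix $1 \neq g \in \gen{x} \cap C(T)$. As $g$ is a power of the torsion element $x$ it is itself torsion, and it is noncentral because $G$, being minimal connected simple, has trivial center; hence $C^\circ(g)$ is a proper, and therefore solvable, connected subgroup of $G$.

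Next I collect two containments inside $M := C^\circ(g)$. Since $g \in C(T)$ the torus $T$ centralizes $g$, so $T \le C(g)$, and as $T$ is connected we get $T \le M$. On the other hand $g \in \gen{x}$ forces $C(x) \le C(g)$, so the connected group $C^\circ(x)$ also lies in $M$. Thus $M$ is a connected solvable subgroup containing both $T$ and $C^\circ(x)$; enlarge it to a Borel subgroup $B \supseteq M$. Now Fact \ref{BCx}, applied to the torsion element $x$ together with the Borel $B \supseteq C^\circ(x)$, places $x \in B$, while at the same time $T \le M \le B$.

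Finally I exploit that $B$ is connected solvable, so its Weyl group is trivial: $N_B(T) = C_B(T)$. This is the fact recalled just before Fact \ref{selfnormsolv}; one recovers it from that fact by observing that $T$ lies in the center of $K := C_B^\circ(T)$ and is therefore the unique maximal decent torus of $K$, whence $N_B(K) \le N_B(T)$, while rigidity of decent tori gives $N_B^\circ(T) = K$ and so $[N_B(K):K] < \infty$, forcing $N_B(K) = K$ by Fact \ref{selfnormsolv}. Since $x \in B$ normalizes $T$, we conclude $x \in N_B(T) = C_B(T) \le C(T)$, contradicting $x \notin C(T)$. This establishes $\gen{x} \cap C(T) = 1$, and the order statement follows because the order of $\bar{x}$ is the least $n$ with $x^n \in C(T)$, which now forces $x^{\operatorname{ord}(\bar{x})} = 1$.

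The step I expect to be most delicate is the last one: converting the triviality of the Weyl group of the connected solvable $B$ into the exact equality $N_B(T) = C_B(T)$ using only Fact \ref{selfnormsolv}. The centrality of $T$ in $K = C_B^\circ(T)$ together with rigidity of decent tori are precisely what make the index $[N_B(K):K]$ finite, so that Fact \ref{selfnormsolv} applies; lining up these bookkeeping facts, rather than any single hard idea, is the crux. The conceptual heart of the argument, by contrast, is the simple observation that a \emph{nontrivial} $g \in \gen{x} \cap C(T)$ is exactly what lets one funnel both $C^\circ(x)$ and $T$ through the solvable group $C^\circ(g)$ into a common Borel.
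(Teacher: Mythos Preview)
Your argument is correct and is essentially the paper's own proof, with the same contradiction via Fact~\ref{BCx} and Fact~\ref{selfnormsolv}; the paper simply writes the nontrivial power as $x^n$ and invokes Fact~\ref{selfnormsolv} in one line where you spell out the $N_B(T)=C_B(T)$ bookkeeping. One tiny remark on that bookkeeping: to close the loop you also need (and implicitly use) the reverse inclusion $N_B(T)\le N_B(K)$, which is immediate since conjugating $T$ to itself sends $C_B(T)$ to itself; with that, $N_B(T)=N_B(K)=K$ and you are done.
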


\begin{proof}
Suppose towards a contradiction that $x^n \in C(T)$ with $x^n \neq 1$.
Fix some Borel subgroup $B$ containing $C^\o(x^n) \geq C^\o (x)$.
Since $x$ is torsion, $x \in B$ by Fact \ref{BCx}.
But $T \leq C^\o(x^n) \leq B$ too,
 contradicting Fact \ref{selfnormsolv}.
\end{proof}


\subsection{Intersections}

In general, $p$-unipotent subgroups are more difficult to handle than tori,
 but their role inside minimal connected simple groups is well understood,
 thanks to the following trivial alteration of \cite[Proposition 3.11]{CJ01}.

\begin{fact}[{\cite[Lemma 2.1]{Bu_Bender}}]\label{jaligot}\label{interpperp}
Let $G$ be a minimal connected simple group.
Let $B_1,B_2$ be two distinct Borel subgroups of $G$
 satisfying $U_{p_i}(B_i) \neq 1$ for some prime $p_i$ ($i=1,2$).
Then $F(B_1) \cap F(B_2) = 1$.
\end{fact}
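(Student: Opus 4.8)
Suppose toward a contradiction that $D := F(B_1) \cap F(B_2) \neq 1$. The plan is to run a maximal-intersection argument in the spirit of Bender's method: I record that each Borel is the connected normalizer of its own Fitting subgroup, then push up normalizers inside the nilpotent Fitting subgroups until the two Borels are forced to coincide.

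First I would establish the self-normalization step. Since $F(B_i) \normal B_i$, we have $B_i \leq N_G(F(B_i))$; as $F(B_i) \neq 1$ is solvable it cannot be normal in the simple nonsolvable group $G$, so $N_G(F(B_i))$ is proper, its connected component is solvable, and hence lies in a Borel. Maximality of $B_i$ among connected solvable subgroups then yields $B_i = N_G(F(B_i))\oo$. The same reasoning shows that $N_G(H)\oo$ is solvable, hence contained in a Borel, for every nontrivial definable connected $H$.

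Next I would choose, among all pairs of distinct Borels whose Fitting subgroups meet nontrivially, one maximizing $\rk(D\oo)$ and then $|D/D\oo|$. Working inside the nilpotent group $F(B_1)$, the normalizer condition gives $N_{F(B_1)}(D) \supsetneq D$ unless $D = F(B_1)$; the unipotence hypothesis $U_{p_1}(B_1) \neq 1$ makes $F(B_1)$ infinite, so in the generic case this strict inclusion enlarges the connected intersection. Passing to $M := N_G(D\oo)\oo$, which is solvable and so lies in some Borel $B_3$, one finds a connected nilpotent subgroup properly containing $D\oo$ sitting inside both $B_3$ and $F(B_1)$. If this subgroup can be shown to lie in $F(B_3)$, then $B_3 \neq B_1$ would violate maximality, so $B_3 = B_1$, and symmetrically $B_3 = B_2$, forcing $B_1 = B_2$ against our hypothesis. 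The degenerate possibility $D = F(B_1)$ means $F(B_1) \leq F(B_2)$; unless the two Fitting subgroups coincide (in which case the self-normalization step immediately gives $B_1 = B_2$), the same normalizer climb inside $F(B_2)$ reintroduces growth and yields the same contradiction.

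\textbf{Main obstacle.} The delicate point is not the normalizer climbing itself but making it land inside the Fitting subgroup of the new Borel $B_3$: the normalizer condition only produces a connected nilpotent subgroup of $B_3$ strictly above $D\oo$, and promoting this to a genuine increase of $F(B_3) \cap F(B_1)$ requires control of Fitting subgroups of intersections. This is exactly where the unipotence hypotheses are indispensable, since $U_{p_i}(B_i) \leq F(B_i)$ forces the relevant subgroups to be connected and infinite, and where Facts \ref{selfnormsolv} and \ref{unipotentselfnormalization} intervene: the former prevents a Borel from properly containing the connected normalizer of another Borel's Fitting subgroup, and the latter removes the residual $p_i$-torsion obstruction in the relevant normalizer quotients, so that the connected growth cannot be absorbed into a finite extension.
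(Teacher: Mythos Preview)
The paper does not prove this statement: it is quoted as a Fact from \cite[Lemma 2.1]{Bu_Bender}, itself labelled a ``trivial alteration'' of \cite[Proposition 3.11]{CJ01}. So there is no in-paper proof to compare against, and your proposal must stand on its own.

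Your overall strategy---choose a pair $(B_1,B_2)$ with $D=F(B_1)\cap F(B_2)$ maximal, use the normalizer condition inside the nilpotent $F(B_i)$ to climb, and pass to a Borel $B_3\geq N_G\oo(D)$---is exactly the Bender-style argument underlying the cited result. The self-normalization step $B_i=N_G\oo(F(B_i))$ and the handling of the degenerate case $D=F(B_1)$ are fine.

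There is, however, a genuine gap at precisely the point you flag as the ``main obstacle'', and your proposed resolution does not close it. You appeal to Facts~\ref{selfnormsolv} and~\ref{unipotentselfnormalization}, but neither does the job: Fact~\ref{selfnormsolv} concerns almost-self-normalizing connected subgroups inside a solvable group and says nothing about why a nilpotent subgroup of $B_3$ should lie in $F(B_3)$; and Fact~\ref{unipotentselfnormalization} is about the index $[N_G(B):B]$, which is not what is at stake, and in the source literature its proof \emph{uses} the present lemma, so invoking it here is circular. More concretely, the normalizer climb you describe produces a connected nilpotent subgroup of $B_3$ strictly above $D\oo$, but connected nilpotent subgroups of a connected solvable group need not sit inside its Fitting subgroup, so you have no mechanism to enlarge $F(B_1)\cap F(B_3)$ and trigger the maximality contradiction.

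The missing idea is to climb \emph{inside the unipotent part}. Since $U_{p_i}(B_i)$ is a nontrivial normal subgroup of the nilpotent group $F(B_i)$, the group $Y_i:=\bigl(U_{p_i}(B_i)\cap Z(F(B_i))\bigr)\oo$ is infinite; equivalently one may use $C\oo_{U_{p_i}(B_i)}(D)$. Each $Y_i$ centralizes $D$, hence lies in $N_G\oo(D)\leq B_3$, and---this is the crux---a $p_i$-unipotent subgroup of the connected solvable group $B_3$ automatically lies in $U_{p_i}(B_3)\leq F(B_3)$. Thus $Y_i\leq F(B_i)\cap F(B_3)$ and $U_{p_i}(B_3)\neq1$, so $B_3$ is again of the required type; now maximality applied to the pairs $(B_1,B_3)$ and $(B_2,B_3)$ forces $B_1=B_3=B_2$. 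That automatic containment of $p$-unipotent subgroups in Fitting subgroups is the real reason the hypotheses $U_{p_i}(B_i)\neq1$ are there, not the two Facts you cite.
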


Here the $p$-unipotent radical $U_p(\cdot)$ denotes
 the largest $p$-unipotent subgroup, and
the Fitting subgroup $F(\cdot)$ is the largest normal nilpotent subgroup.
One normally invokes this fact to say that,
 if $B$ is a Borel subgroup with $U_p(B) \neq 1$, then
 $B \cap B^g$ is $p^\perp$ for all $g \not \in N_G(B)$.
Otherwise one considers a third Borel subgroup
 containing $C(t)$ where $t \in B\cap B^g$ has order $p$.
Such a situation violates Fact \ref{jaligot} because
 $C_{U_p(B)}(t)$ and $C_{U_p(B^g)}(t)$ would be infinite.


%
%

%
%

As in \cite{FJ_Conjugacy}, a {\it maximal unipotence parameter} $\tilde{q}$ of
 a group $B$ of finite Morley rank is a pair $(p, r)$ where $p$ is a prime number or $\infty$,
 and $r$ an integer or $\infty$, such that $p = \infty \Leftrightarrow r < \infty$.

The phrase ``$\tilde{q}$-subgroup" has the obvious meaning if $p$ is prime;
 but ``$\tilde{q}$-subgroup" means a $U_{(0,r)}$-subgroup \cite{BuPhd} in the second case.

\begin{fact}[{\cite[Lemme 1.9.1]{DeloroPhd}}]\label{Marseilles}
Let $G$ be a minimal connected simple group,
 and let $B$ be a Borel subgroup of $G$.
Also let $\tilde{q}$ be a maximal unipotence parameter of $B$.
If $A$ is a normal $\tilde{q}$-subgroup of $B$, then
 $B$ is the only Borel subgroup containing $A$
 for which $\tilde{q}$ is still a maximal unipotence parameter.
\end{fact}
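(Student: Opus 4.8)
The plan is to show that any rival Borel subgroup $B'$ containing $A$, with $\tilde q$ still maximal in $B'$, must satisfy $1 \neq A \leq F(B) \cap F(B')$, and then to invoke an \emph{intersection principle} forcing $B = B'$: distinct Borel subgroups sharing the maximal unipotence parameter $\tilde q$ cannot both carry a common nontrivial $\tilde q$-subgroup inside their Fitting subgroups. So the whole argument reduces to the two containments together with such a principle.

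The first containment $A \leq F(B)$ is immediate. A $\tilde q$-subgroup is definable, connected and nilpotent, and $A$ is normal in $B$ by hypothesis; hence $A$ lies in the largest normal nilpotent subgroup, namely $F(B)$.

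The delicate containment is $A \leq F(B')$, since $A$ is \emph{not} assumed normal in $B'$, and this is where maximality of $\tilde q$ is used decisively. By Burdges' structure theory of unipotence \cite{BuPhd}, when $\tilde q$ is the maximal unipotence parameter of a connected solvable group, the subgroup $U_{\tilde q}(B')$ generated by all $\tilde q$-subgroups of $B'$ is itself a normal, connected, nilpotent $\tilde q$-subgroup, so that $U_{\tilde q}(B') \leq F(B')$. As $A$ is one of the $\tilde q$-subgroups of $B'$, this yields $A \leq U_{\tilde q}(B') \leq F(B')$. The point is that homogeneity and nilpotence of the \emph{top} unipotence degree fail for smaller parameters — a lower-degree $\tilde q$-subgroup (a torus, say) need not sit in the Fitting subgroup — so maximality cannot be dropped here.

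Finally the intersection principle closes the argument, and this is the step I expect to be the main obstacle. If $\tilde q = (p,\infty)$ with $p$ prime, then $A \leq U_p(B) \cap U_p(B')$ with $U_p(B), U_p(B') \neq 1$, so Fact \ref{jaligot} gives $F(B) \cap F(B') = 1$, contradicting $1 \neq A$ unless $B = B'$. The case $\tilde q = (\infty,r)$ of characteristic-zero unipotence is the genuine difficulty, because Fact \ref{jaligot} is stated only for $p$-unipotent radicals; there I would invoke the analogous intersection lemma from the characteristic-zero unipotence theory of \cite{BuPhd}, to the effect that for $B \neq B'$ the group $F(B) \cap F(B')$ contains no nontrivial $U_{(0,r)}$-subgroup in the top degree $r$. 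The presence of $A \leq F(B) \cap F(B')$ then again forces $B = B'$, which is exactly the asserted uniqueness.
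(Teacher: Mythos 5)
The paper itself contains no proof of this statement---it is quoted verbatim from \cite{DeloroPhd}---so your proposal has to stand on its own. Its skeleton is sensible, and three of its four ingredients are sound: $A \leq F(B)$ is immediate; $A \leq U_{\tilde{q}}(B') \leq F(B')$ is a genuine theorem of Burdges' unipotence theory and you correctly identify that it is exactly the maximality of $\tilde{q}$ in $B'$ that makes it work; and when $\tilde{q}=(p,\infty)$ the conclusion does follow from Fact \ref{jaligot}.

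The gap is where you predicted it, but it is not a citation gap you could fill: no lemma of the form ``distinct Borel subgroups sharing the maximal parameter $(0,r)$ carry no common nontrivial $(0,r)$-subgroup in their Fitting subgroups'' is available in \cite{BuPhd} or \cite{Bu_Bender}, and there is a structural reason to doubt your route entirely. Notice that your argument never uses the hypothesis $A \trianglelefteq B$: maximality of $\tilde{q}$ already gives $A \leq U_{\tilde{q}}(B) \leq F(B)$ without normality, so a completed version of your proof would establish a normality-free strengthening of the Fact; the ``intersection principle'' you defer to is thus at least as strong as the statement to be proved, and invoking it is circular in effect. The dichotomy between the two cases is this: $(p,\infty)$ sits at the top of the parameter ordering, so the pushing-up argument behind Jaligot's lemma can never escape into a Borel subgroup of larger parameter, whereas $(0,r)$ never is at the top. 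Concretely, starting from $1 \neq W \leq U_{(0,r)}(B) \cap U_{(0,r)}(B')$ with $W$ proper in both radicals, the normalizer condition places the $\tilde{q}$-subgroups $N^{\circ}_{U_{(0,r)}(B)}(W)$ and $N^{\circ}_{U_{(0,r)}(B')}(W)$, which properly contain $W$, inside a Borel subgroup $B_3 \supseteq N_G^{\circ}(W)$; but $B_3$ may have maximal parameter strictly above $(0,r)$ (it may even have $p$-unipotence), and there the induction dies---distinct Borel subgroups really can share nontrivial unipotent subgroups inside both Fitting subgroups, which is precisely what the maximal-pair analysis of \cite{Bu_Bender} is about. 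The hypothesis $A \trianglelefteq B$ is the designed anchor against this escape: simplicity of $G$ makes $N_G(A)$ proper, minimal simplicity makes $N_G^{\circ}(A)$ solvable, and since it contains the Borel subgroup $B$ it equals $B$; so $N_G^{\circ}(A)=B$, and every normalizer produced inside $F^{\circ}(B')$ lands back in $B$ (for instance, if $A = U_{\tilde{q}}(B')$ one gets $B' \leq N_G^{\circ}(A) = B$ immediately). Any correct proof of the Fact runs through this identity, which your proposal discards.
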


\subsection{Actions}

We require the following easy result which we have not located in the literature.

\begin{lemma}\label{CUpBt_inf}
Let $G$ be a group of finite Morley rank and $U \leq G$ be a $p$-unipotent subgroup.
If $t \in N_G(U)$ is such that $C_U(t) \neq 1$, then $C_U(t)$ is infinite.

In particular, if $G$ is a minimal connected simple group
 and $B \leq G$ a Borel subgroup with $U_p (B) \neq 1$,
then $C_{U_p(B)} (t)$ is infinite for any $t \in G$ with $C_{U_p(B)}(t) \neq 1$.
\end{lemma}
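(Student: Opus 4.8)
The plan is to prove the first assertion, since the displayed particular case is the instance $U = U_p(B)$ (one checks that an element $t$ with $C_{U_p(B)}(t)\neq 1$ normalises $U_p(B)$ in this setting, so that the general statement applies). Write $D := d(t)$ for the definable hull of $\langle t\rangle$. As normalising $U$ is a definable condition, $D \leq N_G(U)$; and since for each $u \in U$ the set $\{s \in D : u^s = u\}$ is a definable subgroup containing $t$, hence containing $D$, one has $C_U(t) = C_U(D)$. Thus it suffices to show that the definable subgroup $C_U(D)$ is infinite as soon as it is nontrivial, and I would do this by induction on $\rk U$.

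For the inductive step, suppose toward a contradiction that $C_U(D)$ is finite and nontrivial. Since $U$ is nilpotent, choose a proper, infinite, $D$-invariant, definable \emph{central} subgroup $A \leq Z(U)$: when $U$ is nonabelian take $A = Z(U)\oo$, and when $U$ is abelian but not $D$-minimal take any proper infinite $D$-invariant definable subgroup, which is automatically central. As $A$ is abelian of smaller rank, induction forces $C_A(D) = C_U(D)\cap A$ to be trivial, so $C_U(D)$ injects into $U/A$ with image contained in $C_{U/A}(D)$; this image is nontrivial and finite, so by induction $C_{U/A}(D)$ is infinite. Let $P \leq U$ be the preimage of $C_{U/A}(D)$, a $D$-invariant definable subgroup with $\rk P = \rk A + \rk C_{U/A}(D) > \rk A$. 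For $u \in P$ one has $[u,t] \in A$, and since $A$ is central the map $\tau : P \to A$, $u \mapsto [u,t]$, is a definable homomorphism with kernel $C_P(t) = C_U(D)$. Rank--nullity then gives $\rk \tau(P) = \rk P > \rk A$, which is absurd as $\tau(P) \leq A$.

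It remains to treat the base case, where $U$ is $D$-minimal; here the commutator subgroup $[U,U]$, being connected, definable and $D$-invariant, must be trivial, so $U$ is abelian, and I must show that $C_U(D)$ equals $1$ or $U$. Writing $\alpha$ for the automorphism induced by $t$, the $D$-invariant definable subgroup $[U,t] = (\alpha-1)U$ is either trivial---whence $\alpha = 1$ and $C_U(D) = U$---or all of $U$, so that $\alpha - 1$ is surjective with finite kernel $C_U(D)$. In the latter case the action is nontrivial: when the image of $D$ in $\mathrm{Aut}(U)$ is infinite, Zilber's Field Theorem \cite{BN} interprets an algebraically closed field $K$ with $U \cong K^+$ on which $D$ acts by multiplication, so that $\alpha - 1$ is multiplication by a nonzero scalar and is injective, forcing $C_U(D) = 1$; when that image is finite one decomposes the (finite-order) action into its $p$- and $p'$-parts, using that a $p$-element acts unipotently, so $\alpha-1$ is nilpotent and cannot be surjective, together with coprime action, to reach the same conclusion.

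The main obstacle is precisely this base case: excluding a finite, nontrivial fixed-point group for a surjective $\alpha - 1$. This is exactly where the \emph{definability} of conjugation by $t$ is indispensable---an abstract Frobenius-type automorphism of the additive group of $\overline{\mathbb F_p}$ would leave the finite fixed group $\mathbb F_p$, and it is Zilber's Field Theorem, available only because the action is definable and $U$ is of finite Morley rank, that rules this out.
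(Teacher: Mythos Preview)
Your argument is correct and follows a genuinely different route from the paper's. The paper fixes $x\in C_U(t)^\#$, locates it in a central quotient $Y=Z^\circ_{i+1}(U)/Z^\circ_i(U)$, and then decomposes $t=uvw$ inside $d(t)$ into a divisible part, a $p'$-torsion part, and a $p$-torsion part; it shows $u$ centralizes $Y$ (via Wagner's result that $K^\times$ in positive characteristic has no torsion-free definable section), that $C_Y(v)$ is connected hence infinite (coprime action, Fact~\ref{f:CsumC}), that $C_{C_Y(uv)}(w)$ is infinite ($p$-element on a $p$-group), and finally pulls back through the quotient via Fact~\ref{f:C_v_mod_A}. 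Your induction on $\rk U$, with the commutator-map rank comparison in the inductive step and Zilber's Field Theorem in the $D$-minimal base case, is arguably cleaner and more self-contained; the paper's route is more direct and avoids setting up the induction. Both ultimately hinge on the same circle of facts (coprime decomposition, field interpretation in characteristic $p$).

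Two places where your write-up should be tightened. First, in the finite-image subcase of the base case, the sentence ``a $p$-element acts unipotently, so $\alpha-1$ is nilpotent'' skips steps: you should note that $D$-minimality forces $pU$ (connected, $D$-invariant, proper) to be trivial, so $U$ is an $\F_p$-space; then, writing $\alpha=\beta\gamma$ with $\beta$ the $p$-part and $\gamma$ the $p'$-part, if $\gamma\neq 1$ then coprime action and $D$-minimality give $[U,\gamma]=U$ and $C_U(\gamma)=0$, whence $C_U(\alpha)\leq C_U(\gamma)=0$; if $\gamma=1$ then $\alpha$ has $p$-power order and $(\alpha-1)^{p^a}=0$ over $\F_p$, so $\alpha-1$ cannot be surjective, contradicting Case~B. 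Second, the ``one checks'' for the particular case is precisely Fact~\ref{jaligot}: if $1\neq u\in C_{U_p(B)}(t)$ then $u\in U_p(B)\cap U_p(B^{t^{-1}})$, which forces $B=B^{t^{-1}}$, i.e.\ $t\in N_G(B)\leq N_G(U_p(B))$.
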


This will follow from the following two facts.

\begin{fact}[{\cite[Fact 3.3]{Bu_signalizer}}]\label{f:CsumC}
Let $H = K T$ be a group of finite Morley rank.
Suppose that $T$ is a solvable $\pi$-group of bounded exponent and
 that $K$ is a definable abelian normal $\pi^\perp$-subgroup of $H$.
Then $H = [H,T] \oplus C_H(T)$.
\end{fact}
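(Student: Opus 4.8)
The content of this statement is a coprime-action decomposition, so the plan is to concentrate on the action of $T$ on the abelian normal subgroup $K$ and then reassemble. First I would record the coprimality: since $T$ is a $\pi$-group of bounded exponent (so every nontrivial element is a $\pi$-element) and $K$ is $\pi^\perp$, we have $T \cap K = 1$, hence $H = K \rtimes T$ and conjugation makes $K$ a definable $T$-module; both $[K,T]$ and $C_K(T)$ are definable, being respectively a commutator subgroup and the centralizer of a definable group. Since $K$ carries the whole action and $[H,T]$, $C_H(T)$ are assembled from $[K,T]$, $C_K(T)$ together with the corresponding objects inside $T$, the crux is the module decomposition
\[ K = [K,T] \oplus C_K(T), \]
from which the stated identity is read off.

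The core mechanism is an averaging (norm) argument, which I would carry out first for a cyclic $T = \langle t \rangle$ with $t$ of $\pi$-order $n$. Because $K$ is abelian, the maps $\psi(k) = k^{-1}k^{t}$ and $\theta(k) = \prod_{i=0}^{n-1} k^{t^{i}}$ are definable endomorphisms of $K$ with $\psi(K) = [K,t]$, $\ker\psi = C_K(t)$, $\theta(K) \le C_K(t)$, and $[K,t] \le \ker\theta$ (all verified by reindexing the product, using $k^{t^{n}} = k$ and commutativity of $K$). The one genuinely model-theoretic input is that, since $K$ is $\pi^\perp$ and $n$ is a $\pi$-number, the power map $k \mapsto k^{n}$ is an \emph{injective} definable endomorphism of $K$, hence surjective (an injective definable endomorphism of a group of finite Morley rank is onto, by rank and degree); thus $K$ has unique $n$-th roots. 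On $C_K(t)$ one has $\theta(c) = c^{n}$, so $\theta$ restricts to an automorphism of $C_K(t)$; this forces $\theta(K) = C_K(t)$ and $K = \ker\theta \oplus C_K(t)$, the intersection being trivial because $\theta$ is injective there. Applying the same injective-implies-surjective principle to $\psi$ restricted to $\ker\theta$ (whose kernel $C_K(t) \cap \ker\theta$ is trivial) identifies $\psi(\ker\theta) = \ker\theta$, whence $\ker\theta = [K,t]$ and $K = [K,t] \oplus C_K(t)$.

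Next I would pass from a single automorphism to the solvable group $T$ by induction on its derived length, the cyclic computation feeding the abelian base step. Here lies the main obstacle: $T$ is only of bounded exponent, not finite, so neither the explicit norm map nor a naive sum over $T$ is available, and one must stay inside the definable category throughout. I would propagate the splitting through a definable normal series of $T$, using at each layer that $[K,\cdot]$ and $C_K(\cdot)$ are definable and that, by the descending chain condition, $C_K(T)$ is already cut out by finitely many elements of $T$; the bounded exponent is exactly what keeps the coprime power maps invertible layer by layer. Reconciling the decomposition across the nonabelian layers while preserving definability is the delicate point, but once $K = [K,T] \oplus C_K(T)$ is in hand, combining it with the behaviour of $T$ itself yields the decomposition $H = [H,T] \oplus C_H(T)$.
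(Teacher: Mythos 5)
First, a point of reference: the paper itself gives no proof of Fact \ref{f:CsumC} --- it is imported, with variables renamed, from \cite{Bu_signalizer} --- so your attempt can only be judged on its own merits. Your cyclic case is correct and complete: the interplay of $\psi(k)=k^{-1}k^{t}$ with the norm map $\theta$, the injectivity of the $n$-th power map on a $\pi^\perp$-group, and the principle that an injective definable endomorphism of a group of finite Morley rank is surjective, do yield $K=[K,t]\oplus C_K(t)$. The genuine gap is the passage to general $T$: you name the difficulty (``reconciling the decomposition across the nonabelian layers'') and then leave it unresolved, but that is exactly where the remaining content lies; note also that even the definability of $[K,T]$, which you assert at the outset, is not free, since $T$ itself need not be definable. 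Here is a clean way to finish along your own lines. Your norm argument works verbatim for \emph{any} finite subgroup $T_0\le T$, solvable or not: set $\theta(k)=\prod_{x\in T_0}k^{x}$, observe that $\theta(k)\equiv k^{|T_0|} \bmod [K,T_0]$ and that $K/[K,T_0]$ again has injective $\pi$-power maps, so $\ker\theta=[K,T_0]$ and $K=[K,T_0]\oplus C_K(T_0)$. Now, as you observe, DCC gives $t_1,\dots,t_m\in T$ with $C_K(T)=C_K(t_1)\cap\dots\cap C_K(t_m)$; the subgroup $T_0=\langle t_1,\dots,t_m\rangle$ is \emph{finite}, because finitely generated solvable torsion groups are finite --- this is the only place where solvability of $T$ is used. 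For any finite $T_1$ with $T_0\le T_1\le T$ one then has $C_K(T_1)=C_K(T)$, hence $K=[K,T_1]\oplus C_K(T)$, and the modular law applied to $[K,T_0]\le [K,T_1]$ forces $[K,T_1]=[K,T_0]$. Since solvable torsion groups are locally finite, $[K,T]=\bigcup\{[K,T_1]:T_0\le T_1\le T \text{ finite}\}=[K,T_0]$, and the decomposition of $K$ follows; no induction on derived length, and no layer-by-layer reconciliation, is needed.

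Second, your closing sentence --- that $H=[H,T]\oplus C_H(T)$ is then ``read off'' --- cannot be repaired, because that identity is false in general: take $K=1$ and $T$ a nonabelian finite $\pi$-group, so that $H=T$, $[H,T]=T'$, $C_H(T)=Z(T)$, and $T\ne T'\times Z(T)$. What this actually reveals is a renaming slip in the statement as transcribed in the present paper: in the source, the ambient group is $G=HT$ and the conclusion concerns the abelian normal subgroup there called $H$; after the renaming to $H=KT$, the conclusion should read $K=[K,T]\oplus C_K(T)$. That is also the only form in which the paper ever uses the fact (for the section $Y$ in the proof of Lemma \ref{CUpBt_inf}, and for $H=(B_T\cap B_a)^{\circ}$ in Proposition \ref{BTnilp}). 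So you correctly identified, and in the cyclic case proved, the intended statement; but the final ``reassembly'' step of your plan should be replaced by the observation that the displayed conclusion is a misprint, not a consequence.
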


\begin{fact}[{\cite[Lemma 2.5]{BC_generation}, see also \cite[Fact 3.12]{BuPhd}}]\label{f:C_v_mod_A}
Let $H$ be a solvable group of finite Morley rank,
 $v$ a definable automorphism of $H$ of order $q^n$ for some prime $q$, and
 $K$ a definable normal $v$-invariant connected $q^\perp$-subgroup of $H$.
Then $$ C_H\oo(v \bmod K) = C_H\oo(v) K/K \mathperiod $$
\end{fact}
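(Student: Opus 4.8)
The plan is to prove, writing $\bar H := H/K$ and $\bar v$ for the automorphism induced by $v$ and putting a bar over a subgroup to denote its image in $\bar H$, the equality $C_{\bar H}\oo(\bar v) = \overline{C_H\oo(v)}$; this is precisely the assertion $C_H\oo(v \bmod K) = C_H\oo(v)K/K$ in the notation of the statement. The inclusion $\supseteq$ is free: $\overline{C_H\oo(v)}$ is the image of a connected group, hence connected, and it consists of images of $v$-fixed elements, hence is $\bar v$-fixed, so it sits inside $C_{\bar H}\oo(\bar v)$. Everything therefore rests on lifting $\bar v$-fixed points of $\bar H$ to $v$-fixed points of $H$.

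First I would reduce to the case where $K$ is abelian, by induction on the derived length of $K$. If $K$ is non-abelian, set $L := [K,K]$, which is definable and connected (the derived subgroup of the connected group $K$), characteristic in $K$ — hence normal in $H$ and $v$-invariant — and again $q^\perp$. Applying the abelian case to $(H/L,\,K/L)$ and the induction hypothesis to $(H,L)$, then composing the quotient maps $H \to H/L \to (H/L)/(K/L) \cong \bar H$, yields the equality for $(H,K)$: writing $\tilde v$ for the automorphism induced on $H/L$, the image of $C_H\oo(v)$ in $H/L$ is $C_{H/L}\oo(\tilde v)$ by induction, and pushing this further down to $\bar H$ gives $C_{\bar H}\oo(\bar v)$ by the abelian case. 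So it suffices to treat $K$ abelian, where I will in fact obtain the stronger full-centralizer equality $C_{\bar H}(\bar v) = \overline{C_H(v)}$.

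For the abelian base case, fix $\bar h \in C_{\bar H}(\bar v)$ and set $c := h^{-1}v(h)$; since $v(h) \equiv h \pmod K$ we have $c \in K$. Writing $K$ additively and $\sigma := v|_K \in \mathrm{Aut}(K)$, an element $hk$ with $k \in K$ is $v$-fixed exactly when $c = (1-\sigma)(k)$, so lifting $\bar h$ into $C_H(v)$ amounts to showing $c \in \mathrm{im}(1-\sigma)$. Coprimality enters here: applying Fact \ref{f:CsumC} to $K \rtimes \langle v\rangle$ (with $\langle v\rangle$ a bounded-exponent $q$-group and $K$ abelian normal $q^\perp$) gives $K = [K,v] \oplus C_K(v) = \mathrm{im}(1-\sigma) \oplus \ker(1-\sigma)$. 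The telescoping identity $1 = h^{-1}v^{q^n}(h) = \prod_{i=0}^{q^n-1} v^i(c)$, valid since $v^{q^n} = \mathrm{id}$, reads additively as $N(c)=0$ with $N := \sum_{i=0}^{q^n-1}\sigma^i$. As $N$ annihilates $\mathrm{im}(1-\sigma)$ (because $N(1-\sigma)=1-\sigma^{q^n}=0$) and acts on $C_K(v)=\ker(1-\sigma)$ as multiplication by $q^n$, the relation $N(c)=0$ forces the $C_K(v)$-component of $c$ to be a nontrivial $q$-element unless it vanishes; since $K$ is $q^\perp$ it vanishes, so $c \in \mathrm{im}(1-\sigma)$ and $\bar h$ lifts.

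I expect the abelian step to be the main obstacle, and within it the crucial point is showing the obstruction $c = h^{-1}v(h)$ lands in $\mathrm{im}(1-\sigma)$; this is exactly where the hypotheses that $v$ is a $q$-element and $K$ is $q^\perp$ are indispensable, via the norm map $N$ being simultaneously zero on $[K,v]$ and invertible on $C_K(v)$. Once the abelian case yields $C_{\bar H}(\bar v) = \overline{C_H(v)}$, passing to connected components is routine, since $\overline{C_H\oo(v)}$ is connected of finite index in the definable group $\overline{C_H(v)}$, giving $C_{\bar H}\oo(\bar v) = \bigl(\overline{C_H(v)}\bigr)\oo = \overline{C_H\oo(v)}$; the induction above then delivers the statement for arbitrary solvable $K$.
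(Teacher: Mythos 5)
There is nothing in the paper to compare your proof against: the statement is imported as a \emph{Fact}, with citations to \cite{BC_generation} and \cite{BuPhd}, and the paper gives no proof of it. Your argument is correct, and it is the standard coprime-action proof one would expect to find in the cited sources: reduce to abelian $K$ by induction on derived length, identify the obstruction to lifting a fixed point as $c = h^{-1}v(h) \in K$, and kill it with the norm map $N = \sum_{i=0}^{q^n-1}\sigma^i$, which annihilates $[K,v]$ and acts on $C_K(v)$ as multiplication by $q^n$, injectively since $K$ is $q^\perp$. All the supporting steps check out: the telescoping identity, the reformulation of the lifting problem as $c \in \mathrm{im}(1-\sigma)$, the passage to connected components at the end, and the inductive step through $L=[K,K]$ (where torsion lifting guarantees $K/L$ is still $q^\perp$). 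Two small remarks. Your appeal to Fact~\ref{f:CsumC} for $K = [K,v]\oplus C_K(v)$ is legitimate and matches how the paper itself applies that fact in the proof of Lemma~\ref{CUpBt_inf}, even though its displayed conclusion literally reads $H = [H,T]\oplus C_H(T)$; if one were fussy about that, your own norm-map identity $q^n\cdot\mathrm{id} = N + (1-\sigma)Q$ with $Q \in \mathbb{Z}[\sigma]$, together with invertibility of multiplication by $q^n$ on the abelian $q^\perp$ group $K$, reproves the decomposition directly. Also, your proof yields slightly more than the statement: the full-centralizer equality $C_{H/K}(\bar v) = C_H(v)K/K$ (not just its connected version) survives the induction, and solvability of $H$ is used only through the solvability of $K$.
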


The notation $C_G\oo(v \bmod K)$ refers to the connected component
 of the preimage in $G$ of $C_{G/K}(v)$, and $G^\# := G \setminus \{1\}$.

\begin{proof}[Proof of Lemma \ref{CUpBt_inf}]
Let $x \in C_U(t)^\#$ and let $Z_i := Z_i^\o( U )$ for $i \in \N$.
As $U$ is nilpotent and connected,
 there is some integer $i$ such that $x \in Z_{i+1} \setminus Z_i$.
In particular $t$ when acting on the \emph{connected} abelian quotient
 $Y := Z_{i+1}/Z_i$ centralizes $\bar{x}$.

We now write $t = uvw$ with $u, v, w \in d(t)$
 such that $d(u)$ is divisible, $v$ is a $p'$-element, and $w$ is a $p$-element.
By \cite[Corollary 9]{Wag01},
 there are no torsion-free definable sections of the multiplicative subgroup
 of a field of finite Morley rank in positive characteristic.
In consequence, $Y \cdot d(u)$ must be nilpotent, that is $u$ centralizes $Y$.

Notice that $v\in d(t) \leq C(\bar{x})$.
By Fact \ref{f:CsumC}, $C_Y(v) \cong Y/[Y,v]$ is connected.
So $C_Y(v)$ is nontrivial because it contains $\bar{x}$,
 and hence $Y_1 := C_Y(v) = C_Y (uv)$ is infinite.

Eventually the $p$-element $w$ acts on $Y_1$.
As $Y_1$ is an infinite $p$-group of bounded exponent,
 $Y_2 := C_{Y_1} (w) \leq C_Y (uvw)$ is infinite, by \cite[Corollary 6.20]{BN}.
Hence $C_Y(t)$ is infinite.
It follows from Fact \ref{f:C_v_mod_A} that $C_{Z_i}(t)$,
 and then also $C_U(t)$, are infinite.

To prove the second claim, it suffices to notice that 
 $t \in N_G (B) = N_G (U_p (B))$ by Fact \ref{jaligot}.
\end{proof}

We will also use the following generation principle.

\begin{fact}[{\cite[Fact 3.7]{Bu_signalizer}}]\label{Cgeneration}
Let $H$ be a solvable $q^\perp$-group of finite Morley rank.
Let $E$ be a finite elementary abelian $q$-group acting definably on $H$.
Then $H = \langle C_H(E_0) : E_0 \leq E, [E:E_0]=q \rangle$.
\end{fact}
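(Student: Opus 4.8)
The plan is to prove $H=\langle C_H(E_0):E_0\le E,\ [E:E_0]=q\rangle$ by reducing to the case where $H$ is abelian and then settling that case with a fixed-point-counting argument over the hyperplanes of $E$. Write $W:=\langle C_H(E_0):E_0\le E,\ [E:E_0]=q\rangle$. Since $E$ is abelian it normalizes every index-$q$ subgroup $E_0$ and hence every $C_H(E_0)$, so $W$ is $E$-invariant; as $E$ is finite there are only finitely many such $E_0$, so $W$ is a product of finitely many definable subgroups and is therefore itself definable. The goal is $W=H$, which I would attack by induction on $\rk(H)$.

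For the reduction to the abelian case, suppose $H$ is nonabelian and take a proper nontrivial $E$-invariant definable normal subgroup $K$ (these exist by solvability; one wants $K$ connected so as to invoke Fact~\ref{f:C_v_mod_A}). The induction hypothesis applies to $E$ acting on the solvable $q^\perp$-groups $K$ and $H/K$, giving $K\le W$ and generation of $H/K$ by its index-$q$ centralizers. To lift the latter back into $H$ I would use Fact~\ref{f:C_v_mod_A}, applied to $E_0$, to identify $C^{\circ}_{H/K}(E_0)$ with the image of $C^{\circ}_H(E_0)$; together with $K\le W$ this forces $H=W$. I expect the bookkeeping here to be the main obstacle: Fact~\ref{f:C_v_mod_A} controls only connected components and is phrased for a single automorphism, so one must iterate it over a generating set of $E_0$, and one must separately dispose of the finite quotient $H/H^{\circ}$ — an action of the $q$-group $E$ on a finite $q'$-group, handled by the classical coprime generation theorem for finite solvable groups.

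For the abelian case write $V:=H$ and let $n$ be the rank of $E$ (if $n=1$ the only index-$q$ subgroup is trivial and $C_V(1)=V$, so assume $n\ge 2$). By Fact~\ref{f:CsumC}, applied with the finite $q$-group $E_0$ of bounded exponent acting on the abelian $q^\perp$-group $V$, one has $V=C_V(E_0)\oplus[V,E_0]$ for every subgroup $E_0\le E$, and the norm operator $N_{E_0}:=\sum_{g\in E_0}g$ is $|E_0|$ times the projection onto $C_V(E_0)$. Passing to $\bar V:=V/W$ and using that this coprime decomposition is inherited by quotients, I get $C_{\bar V}(E_0)=0$ for every hyperplane $E_0$, hence $C_{\bar V}(E)=0$ as well, so that $N_{E_0}=0$ and $N_E=0$ on $\bar V$. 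Summing $N_{E_0}$ over the $\tfrac{q^n-1}{q-1}$ hyperplanes and counting, for each nonidentity $g\in E$, the $\tfrac{q^{n-1}-1}{q-1}$ hyperplanes through it, I obtain on $\bar V$ the operator identity
\[ 0=\sum_{E_0}N_{E_0}=\frac{q^n-1}{q-1}\cdot 1+\frac{q^{n-1}-1}{q-1}\Bigl(\sum_{g\in E}g-1\Bigr)=\frac{q^n-1}{q-1}-\frac{q^{n-1}-1}{q-1}=q^{n-1}, \]
where I used $\sum_{g\in E}g=N_E=0$. Thus $q^{n-1}\bar V=0$, and since $\bar V$ is $q^\perp$ this forces $\bar V=0$, i.e.\ $W=V$, as desired.
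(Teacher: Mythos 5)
The paper does not prove this statement at all: it is imported verbatim as Fact~3.7 of \cite{Bu_signalizer}, so there is no in-paper argument to compare you against, and your proposal has to be judged on its own terms. Your abelian case is correct and complete. The identification of $N_{E_0}=\sum_{g\in E_0}g$ with $|E_0|$ times the projection onto $C_V(E_0)$ via the decomposition of Fact~\ref{f:CsumC}, the passage to $\bar V=V/W$ (the image of $[V,E_0]$ is all of $[\bar V,E_0]$ while the image of $C_V(E_0)$ dies, so applying Fact~\ref{f:CsumC} to $\bar V$ forces $C_{\bar V}(E_0)=0$), the hyperplane count, and the final step $q^{n-1}\bar V=0\Rightarrow\bar V=0$ (multiplication by $q$ is injective on an abelian $q^\perp$-group) are all sound; this is the classical averaging proof from finite group theory, transposed correctly, and $W$ is indeed definable there as a finite sum of definable subgroups of an abelian group.

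The gap is in the nonabelian reduction, and it is not mere bookkeeping. Your induction hypothesis applied to $H/K$ produces generation by the \emph{full} centralizers $C_{H/K}(E_0)$, which may be disconnected; but Fact~\ref{f:C_v_mod_A} --- even after iterating it over a generating set of $E_0$, and even with $K$ connected --- identifies only the \emph{connected component} $C^{\circ}_{H/K}(E_0)$ with the image of $C^{\circ}_H(E_0)$. So what your argument actually yields is that the image of $W$ in $H/K$ contains $\langle C^{\circ}_{H/K}(E_0) : [E:E_0]=q\rangle$, which may a priori be proper in $H/K$; the conclusion ``together with $K\le W$ this forces $H=W$'' does not follow. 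Neither of your announced patches touches this point: iterating over generators only removes the single-automorphism restriction, and the finite coprime generation theorem only handles disconnectedness of $H$ itself, not disconnectedness of centralizers in the quotient $H/K$. What you actually need is the full coprime lifting statement $C_{H/K}(E_0)=C_H(E_0)K/K$, which is true in this setting but requires its own proof: for instance, reduce to abelian $K$ by induction on derived length, then kill the cocycle $g\mapsto h^{-1}h^g$ by averaging, using that multiplication by $q$ is \emph{bijective} on abelian $q^\perp$-groups of finite Morley rank (injectivity is immediate from $q^\perp$; surjectivity needs Macintyre's divisible-plus-bounded-exponent decomposition). Alternatively, strengthen the statement you prove by induction to ``connected $H$ is generated by the connected centralizers $C^{\circ}_H(E_0)$,'' though even then the finite top quotient $H/H^{\circ}$ requires the same lifting lemma. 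A second, smaller point: induction on $\rk(H)$ alone cannot process disconnected $H$, since $\rk(H^{\circ})=\rk(H)$; you should induct on the pair $(\rk H,\deg H)$.
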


Another useful fact is this nilpotence criterion.

\begin{fact}[{\cite[Theorem 2.4.7]{Wag} plus \cite[Exercise 14 p.79]{BN}}]\label{solvnilp}
Let $H$ be a connected solvable group of finite Morley rank
 with an automorphism of prime order whose centralizer in $H$ is finite.
Then $H$ is nilpotent.
\end{fact}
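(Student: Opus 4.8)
The plan is to argue by induction on $\rk(H)$, reducing the failure of nilpotence to a configuration to which Zilber's Field Theorem applies. Since $H$ is connected and $C_H(\alpha)$ is finite, we have $C_H\oo(\alpha) = 1$, so $\alpha$ is fixed-point-free on every infinite $\alpha$-invariant connected section. Suppose $H$ is a counterexample of minimal rank; then $H$ is not nilpotent, so with $F = F(H)$ the (characteristic, hence $\alpha$-invariant) Fitting subgroup we have $F < H$, and the abelian group $H/F$ acts nontrivially on some $\alpha$-invariant abelian chief factor $V$ inside $F$. Passing to $V$ and to the faithful quotient of the acting group, I would arrange a nontrivial action of a connected abelian group $T$ on the connected abelian group $V$, with both $T$ and $V$ normalized by $\alpha$.

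With this in hand, Zilber's Field Theorem interprets an algebraically closed field $K$ of finite Morley rank such that $V \cong K^+$ and $T$ embeds in $K^\times$, acting on $V$ by field multiplication. The automorphism $\alpha$ permutes this interpretable structure, and conjugation by $\alpha$ transports the multiplication action of $t \in T$ to that of $t^\alpha$; consequently $\alpha$ induces a definable field automorphism $\sigma$ of $K$ with $\sigma^q = 1$, while its action on $V = K^+$ takes the $\sigma$-semilinear form $u \mapsto \lambda\,\sigma(u)$ for a fixed $\lambda \in K^\times$. The restriction of $\alpha$ to the torus $T \le K^\times$ is then exactly $\sigma|_T$.

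The contradiction should come from confronting this field automorphism with the finiteness of $C_H(\alpha)$. If $\sigma = 1$, then $\alpha$ centralizes $T$, so $C_H(\alpha)$ contains the infinite group $T$, which is absurd. If $\sigma \neq 1$, then $\sigma$ is a definable automorphism of prime order $q$ of $K$, and I would invoke the cited field-theoretic input (\cite[Theorem 2.4.7]{Wag} together with \cite[Exercise 14 p.79]{BN}): the fixed data of such an automorphism is infinite, and this infinitude must propagate — through either the additive fixed points on $V$ or the multiplicative fixed points on $T$ — to an infinite subgroup of $C_H(\alpha)$, again a contradiction. Hence no minimal counterexample exists and $H = F(H)$ is nilpotent.

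I expect the main obstacle to be the $\sigma$-semilinear twist by $\lambda$. When $\sigma \neq 1$ the naive additive fixed-point set $\{u : \lambda\,\sigma(u) = u\}$ on $V$ can collapse to zero, and in characteristic zero the torus $T$ may be a proper (bad-field) subgroup of $K^\times$ meeting the fixed field of $\sigma$ only finitely; so neither fixed-point set is infinite by inspection alone. Overcoming this is precisely the role of the cited results, which must be applied to the full interpreted field $K$ rather than to $T$ in isolation, in order to guarantee that some infinite piece of fixed points survives inside $C_H(\alpha)$.
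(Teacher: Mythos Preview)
The paper does not prove this statement; it is quoted as a fact from the literature, so there is no in-paper argument to compare against. Your sketch is headed in a reasonable direction but misdiagnoses where the difficulty lies.

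Your extended concern about the case $\sigma \neq 1$ is misplaced. The exercise cited from \cite{BN} is precisely that an infinite field of finite Morley rank admits no nontrivial definable automorphism; since $\sigma$ is induced by the definable automorphism $\alpha$, it is itself definable, hence $\sigma = 1$ automatically. There is no semilinear twist to overcome, and no need to ``propagate infinitude'' from a fixed subfield back into $T$ or $V$.

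The actual gap is in your $\sigma = 1$ case. You assert that ``$C_H(\alpha)$ contains the infinite group $T$'', but $T$ is a \emph{section} of $H$ (a quotient of $H/F$), not a subgroup; the fact that $\alpha$ centralizes the section $T$ does not by itself produce infinitely many fixed points in $H$. What is missing is the elementary lemma that finiteness of $C_H(\alpha)$ passes to every $\alpha$-invariant connected definable quotient: on such a normal subgroup $N$ the map $h \mapsto h^{-1}h^{\alpha}$ has fibres of size $|C_N(\alpha)| < \infty$ and is therefore surjective onto $N$, whence the preimage in $H$ of $C_{H/N}(\alpha)$ equals $C_H(\alpha)\cdot N$ and $C_{H/N}(\alpha)$ is finite. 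With this in hand your argument closes immediately: $\sigma = 1$ is forced, so $\alpha$ centralizes the infinite section $T$, contradicting the inherited finiteness of centralizers. This lifting step, not the semilinear twist, is the substance behind the combined citation.
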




Almost any article about Weyl groups in our context will employ
 its $p$-adic representation :
$\End(\Z(p^\infty)^n)$ is the ring $M_{n \times n}(\Z_p)$ of
 $n \times n$ matrices over the $p$-adic integers $\Z_p$.
For us, this isomorphism, sometimes referred to as the Tate module,
 will be exploited via the following argument.

\begin{fact}\label{irrpol}
Let $G$ be a group of finite Morley rank.
Let $T$ be a $p$-torus and $a \in N(T) \setminus C(T)$ be a $p$-element.
Then the \Prufer $p$-rank of $T$ is at least $p-1$.
\end{fact}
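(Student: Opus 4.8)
The plan is to exploit the $p$-adic representation recalled just above the statement. Conjugation by $a$ induces an automorphism $\sigma$ of $T \cong \Z(p^\infty)^n$, where $n$ denotes the \Prufer $p$-rank of $T$, and $\operatorname{Aut}(\Z(p^\infty)^n)$ is the group $\mathrm{GL}_n(\Z_p)$ of invertible matrices inside the ring $M_{n\times n}(\Z_p)$. Since $a$ is a $p$-element, its image $\sigma$ has $p$-power order; and since $a \notin C(T)$, we have $\sigma \neq 1$. Passing to a suitable power of $\sigma$, I may assume $\sigma$ has order exactly $p$, so that $\sigma \in \mathrm{GL}_n(\Z_p)$ is nontrivial and satisfies $\sigma^p = 1$.

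I would then study the characteristic polynomial $\chi_\sigma \in \Z_p[X]$, a monic polynomial of degree $n$, over the field $\mathbb{Q}_p$. From $\sigma^p = 1$ one gets that the minimal polynomial $m_\sigma$ divides $X^p - 1 = (X-1)\,\Phi_p(X)$, where $\Phi_p(X) = X^{p-1} + \cdots + X + 1$ is the $p$-th cyclotomic polynomial. Because $X^p - 1$ is separable, $m_\sigma$ is squarefree; and since $\sigma \neq 1$ it is not the polynomial $X - 1$. The crux is that $\Phi_p$ is irreducible over $\mathbb{Q}_p$: after the translation $X \mapsto X + 1$ it becomes Eisenstein at $p$ (leading coefficient $1$, constant term $\binom{p}{1} = p$ which is not divisible by $p^2$, and all intermediate coefficients of the form $\binom{p}{k}$ with $1 < k < p$ divisible by $p$). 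Consequently $\Phi_p$ must appear as an irreducible factor of $m_\sigma$, hence of $\chi_\sigma$.

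Comparing degrees then gives $p - 1 = \deg \Phi_p \le \deg \chi_\sigma = n$, which is the claimed bound. I expect the only genuinely delicate point to be the irreducibility of $\Phi_p$ over the $p$-adic field $\mathbb{Q}_p$ rather than merely over $\mathbb{Q}$: ordinary rational irreducibility would not prevent $\Phi_p$ from splitting into smaller $p$-adic factors, which would wreck the degree count, so the Eisenstein argument after translation is really doing the essential work. Everything else is routine linear algebra over $\mathbb{Q}_p$ together with the identification $\operatorname{Aut}(\Z(p^\infty)^n) = \mathrm{GL}_n(\Z_p)$.
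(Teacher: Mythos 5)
Your proposal is correct and follows essentially the same route as the paper's own proof: pass to a power so the induced automorphism has order exactly $p$, view it in $\mathrm{GL}_d(\Z_p)$ via the Tate module, note the minimal polynomial divides $(X-1)(X^{p-1}+\cdots+1)$ but is not $X-1$, invoke irreducibility of the $p$-th cyclotomic polynomial over $\Q_p$, and compare with the degree of the characteristic polynomial. The only difference is that you supply the Eisenstein-after-translation justification for that irreducibility, which the paper simply asserts.
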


\begin{proof}
We may assume that $a^p$ centralizes $T$.
Represent $a$ as an automorphism of $\GL_d (\Z_p)$, 
 where $d$ denotes the \Prufer $p$-rank of $T$, and work in $\GL_d (\Q_p)$.
The minimal polynomial $\mu_a$ divides $X^p - 1 = (X - 1) (1 + X + \dots + X^{p-1})$.
Of course $a \not \in C(T)$ implies $\mu_a \neq X - 1$.
On the other hand $1 + X + \dots + X^{p-1}$ is irreducible over $\Q_p$.
So it follows that $\mu_a $ has degree at least $p-1$.
Now the characteristic polynomial has degree $d$ and is a multiple of $\mu_a$,
 whence $d \geq p-1$.
\end{proof}

\section{Cartan subgroups}\label{sec:Cartan}

A Cartan subgroup of an algebraic group is the centralizer of a maximal torus.
In this section we analyze the ``Cartan subgroup'' $C(T)$ of our group
 by analyzing representatives of the Weyl group,
 eventually proving the following.

\begin{theorem}[Henri/\'Elie]\label{HE}
Let $G$ be a minimal connected simple group of degenerate type.
Suppose also that $G$ has a nontrivial Weyl group $W := N(T)/C(T)$
 where $T$ is a maximal decent torus.
Then the Cartan subgroup $C(T)$ is nilpotent,
 and thus is a Carter subgroup of $G$.

Moreover, $C(T)$ is actually a Borel subgroup
 if either $C(T)$ is not abelian or
 $G$ has \Prufer $q$-rank $\geq 3$ for some prime $q$.
\end{theorem}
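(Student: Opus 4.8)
The plan is to fix a prime $p$ dividing $|W|$ and to work with a torsion element $w \in N(T)\setminus C(T)$ representing an element of order $p$ of $W$; by Lemma~\ref{lem:constant_order} such a $w$ has order exactly $p$ and meets $C(T)$ trivially. Since $C(T)$ is characteristic in $N(T)$, conjugation by $w$ is an automorphism of $C(T)$, and it has order exactly $p$: it cannot be trivial, as that would force $w$ to centralise $T \leq Z(C(T))$. Recall that $C(T)$ is connected by Fact~\ref{CTconnected} and solvable by minimal simplicity, so it is the natural target for the nilpotence criterion Fact~\ref{solvnilp}. I also note that in degenerate type $G$ has no involutions, so (every element of $W$ having a torsion representative of the same order) $|W|$ is odd, which is precisely what allows me to invoke Fact~\ref{f:Weyl_p} later.

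For the first assertion I would deduce nilpotence of $C(T)$ from Fact~\ref{solvnilp} applied to the order-$p$ automorphism induced by $w$: it suffices to show that $C_{C(T)}(w)$ is finite. Once $C(T)$ is known to be nilpotent it is automatically a Carter subgroup: a Frattini argument over the conjugacy of maximal decent tori (exactly as in Lemma~\ref{lem:Weyl_increase}) gives $N_G(C(T)) = C(T)\cdot N_{N_G(C(T))}(T)$ and hence $[N_G(C(T)):C(T)] \leq |W| < \infty$, and a connected nilpotent subgroup of finite index in its normaliser is Carter by definition. The genuine difficulty is the finiteness of $C_{C(T)}(w)$, which fails a priori because $w$ fixes the subtorus $C^{\circ}_T(w)$ pointwise. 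I expect this to be the heart of the argument: a nontrivial $D := C^{\circ}_{C(T)}(w)$ would lie inside the connected solvable group $C^{\circ}(w)$ together with $C^{\circ}_T(w)$, and I would play off the maximality of $T$ against the torality/unipotence dichotomy of Facts~\ref{f:torality} and~\ref{f:Weyl_p}: either $w$ becomes toral and enlarges $T$, contradicting maximality, or the resulting unipotence in $C^{\circ}(w)$ collides with the intersection principle Fact~\ref{jaligot}.

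For the \emph{moreover} clause I would assume toward a contradiction that $C(T)$ is a proper subgroup of a Borel $B$, and run the \emph{neither-inside-nor-outside} argument already rehearsed for Corollary~\ref{BW} in the introduction. First I would show that $B$ is the \emph{unique} Borel containing $C(T)$, pinning it down through the uniqueness Fact~\ref{Marseilles} applied to a suitable characteristic unipotent subgroup of $C(T)$; since $w$ normalises $C(T)$, uniqueness forces $w \in N_G(B)$. Then $w$ can lie neither in $B$ nor outside it. If $w \in B$, then as $T \leq C(T) \leq B$ is a maximal decent torus of $B$ and connected solvable groups have trivial Weyl group (Fact~\ref{selfnormsolv}), we get $N_B(T) = C_B(T) \leq C(T)$, whence $w \in C(T)$, a contradiction. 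If $w \notin B$, then $w$ contributes a nontrivial $p$-element to $N_G(B)/B$, contradicting Fact~\ref{unipotentselfnormalization} as soon as $U_p(B) \neq 1$. The two hypotheses supply exactly the two missing ingredients, uniqueness of $B$ and a nontrivial unipotent radical: if $C(T)$ is non-abelian then, being nilpotent, its derived subgroup is a nontrivial unipotent characteristic subgroup that both feeds Fact~\ref{Marseilles} and yields $U_p(B) \neq 1$ for the relevant $p$; if instead some \Prufer $q$-rank is at least $3$, I would combine the representation-theoretic lower bound Fact~\ref{irrpol} with Fact~\ref{f:Weyl_p} to manufacture a $w$-invariant unipotent subgroup and to match the prime.

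The main obstacle throughout is the interplay between the fixed points of the Weyl element on the central torus $T$ and the Fitting structure of $C(T)$: both the finiteness of $C_{C(T)}(w)$ for the nilpotence step and the location of a $w$-invariant unipotent subgroup forcing uniqueness of $B$ for the Borel step reduce to understanding $C^{\circ}(w)$ through the torality and unipotence results available only in degenerate type. Here Lemma~\ref{CUpBt_inf} should serve as the technical workhorse, ensuring that any nontrivial unipotent centraliser that appears is in fact infinite and hence exploitable, while Fact~\ref{BCx} controls where the torsion element $w$ is forced to live relative to the Borel subgroups in play.
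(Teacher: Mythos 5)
Your plan has the correct endpoints, and they coincide with the paper's: nilpotence of $C(T)$ is to come from Fact \ref{solvnilp} applied to a torsion Weyl representative with small fixed points, and the Carter property then follows since $C(T)$ is connected, definable and almost self-normalizing. But the step you yourself call the heart of the argument --- finiteness of $C_{C(T)}(w)$ --- is exactly where all the work of the paper lies, and your sketch for it does not go through. Your proposed dichotomy is miscalibrated twice over. First, the toral horn is vacuous as a contradiction: if $w$ were toral, Fact \ref{f:torality} would only place $w$ in some conjugate $T^g$ of the maximal torus, which contradicts nothing about maximality of $T$ (ruling out torality of Weyl representatives is genuinely hard, and in the paper it is done only in Corollary \ref{astrWxstrW}, using consequences of Theorem \ref{HE} itself). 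Second, unipotence in $C^\circ(w)$ does not by itself ``collide with'' Fact \ref{jaligot}: to make the intersection machinery fire one needs $w$ itself to be a unipotent element of the unique Borel $B_w \supseteq C^\circ(w)$, for then any $x \in C_{C(T)}(w)^\#$ centralizes the nontrivial element $w$ of $U_r(B_w)$, so $C_{U_r(B_w)}(x)$ is infinite by Lemma \ref{CUpBt_inf}, so $T \leq C^\circ(x) \leq B_w$ by Fact \ref{jaligot}, contradicting Fact \ref{selfnormsolv}; this is the paper's Lemma \ref{CTW=1}. Placing $w$ inside $U_r(B_w)$ (Lemma \ref{Ba}) requires knowing that $G$ has \emph{no $r$-torus at all}, i.e.\ $r \in \tau'$, and that is the paper's Lemma \ref{a_not_piT} / Corollary \ref{r_not_piT} --- a separate, nontrivial argument via Facts \ref{BCx}, \ref{unipotentselfnormalization} and \ref{jaligot} which is entirely absent from your proposal.

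The ``moreover'' clause has a second, independent gap. Your neither-inside-nor-outside trick fails on the outside horn: it needs $U_p(B) \neq 1$ for the same prime $p = |w|$, and this cannot come from your proposed source. Since $p$ divides $|W|$ and $p \in \tau'$, the paper's Lemma \ref{UpCT=1} shows $C(T)$ is $p^\perp$, so the derived subgroup of $C(T)$ carries no $p$-torsion whatsoever; your appeal to Fact \ref{Marseilles} is also off, since that fact requires a subgroup \emph{normal in $B$} that is homogeneous for a \emph{maximal} unipotence parameter of $B$, and $C(T)'$ is neither known to be normal in $B$ nor of the right type. The paper's route is different: in the non-abelian case, uniqueness of the Borel above $C(T)$ comes from the Bender-method fact that a nilpotent subgroup of two distinct Borel subgroups is abelian (\cite[Proposition 4.1ii]{Bu_Bender}, used in Proposition \ref{goingup}(i)); the Pr\"ufer-rank-$\geq 3$ case is a substantial argument with unipotence parameters, Fact \ref{Cgeneration} and Fact \ref{Marseilles} (Proposition \ref{goingup}(ii)); and the conclusion is reached not by a parity contradiction but by Proposition \ref{BTnilp}, which analyzes the intersection $(B_T \cap B_a)^\circ$ at length to show $C^\circ_{B_T}(a) = 1$, whence $B_T$ is nilpotent by Fact \ref{solvnilp} and therefore \emph{equals} the Carter subgroup $C(T)$. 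In short, both pivotal steps --- fixed-point-freeness on $C(T)$, and nilpotence of the invariant Borel --- are missing, and the shortcuts proposed in their place would fail.
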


Throughout this section we use the hypotheses and notation of Theorem \ref{HE},
 i.e.\ $G$ is a minimal connected simple group of degenerate type
 with a nontrivial Weyl group $W := N(T)/C(T)$.%
\margin{$G$,$T$,$W$}
Of course the decent torus $T$ is necessarily nontrivial under this hypothesis.

\subsection{Weylian elements}

We say that a torsion element $a$ of $G$ is {\em Weylian}
 if it normalizes some maximal decent torus $T$.
Also let $\tau$ denote the set of primes occurring in a maximal decent torus $T$.
By conjugacy of decent tori \cite{Ch05},
 $\tau$ is just the set of primes for which $G$ contains divisible torsion.%
\margin{Weylian,$\tau'$}
Then $$ \tau' :=
 \Setst{ \textrm{$p$ prime} }{ \textrm{$\Z(p^\infty)$ does not embed into $G$} }
 \mathperiod $$
Of course, an element $a$ of $G$ is $\tau'$
 iff its (finite) order $|a|$ is a $\tau'$ number.
We note that both of these properties are closed under taking powers.
In \S\ref{sec:Cyclicity}, we will show that
 any element of $W$ lifts to a Weylian $\tau'$-element.

\begin{lemma}\label{Ba}
If $a$ is a $\tau'$-element, then
 there is a unique Borel subgroup $B_a$ containing $C^\o(a)$,
 and $a$ is a product of unipotent elements of $B_a$.
\end{lemma}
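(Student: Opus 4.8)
The plan is to take $B_a$ to be any Borel subgroup containing $C\oo(a)$, and then to deduce both the unipotence statement and uniqueness from a single structural observation about $\tau'$-torsion in connected solvable subgroups. I may assume $a \neq 1$, since otherwise the statement is vacuous. As $a$ is torsion and $G$ is simple, hence centreless, we have $C(a) \neq G$, so $C\oo(a)$ is a proper connected definable subgroup; by minimal simplicity it is solvable and therefore lies in some Borel $B_a$. By Fact \ref{BCx} the element $a$ itself then lies in $B_a$, and in fact in \emph{every} Borel containing $C\oo(a)$; this is what will let me compare two candidate Borels.

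The core claim I would isolate is the following: \emph{every $\tau'$-element of a connected solvable subgroup $B \leq G$ lies in $F\oo(B)$, with each primary component a $p$-unipotent element of $B$}. To prove it I would pass to $B / F\oo(B)$. Since $B' \leq F\oo(B)$, this quotient is connected abelian, hence divisible abelian, so its torsion is a direct sum of copies of $\Z(p^\infty)$. The point is that the primes occurring here are confined to $\tau$: a copy of $\Z(p^\infty)$ in $B/F\oo(B)$ would lift to a $p$-torus of the connected solvable preimage, giving $\Z(p^\infty) \leq B \leq G$ and so $p \in \tau$. As $a$ is a $\tau'$-element, its image in $B/F\oo(B)$ is then a $\tau'$-element of a $\tau$-torsion group, hence trivial, so $a \in F\oo(B)$. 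Now $F\oo(B)$ is connected nilpotent, so for each prime $p$ its $p$-torsion is a central product of a $p$-torus and $U_p\bigl(F\oo(B)\bigr) = U_p(B)$; for $p \in \tau'$ there is no $p$-torus, whence the $p$-primary component $a_p$ of $a$ lies in $U_p(B)$ and is in particular unipotent. Applied to $B = B_a$, this yields the second assertion, namely that $a = \prod_p a_p$ is a product of unipotent elements of $B_a$, and it exhibits a prime $p$ dividing $|a|$ with $U_p(B_a) \neq 1$.

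Uniqueness then follows from the intersection principle. Suppose $B_1 \neq B_2$ both contain $C\oo(a)$. By Fact \ref{BCx} both contain $a$, hence both contain each primary component $a_p$; fix $p$ with $a_p \neq 1$. Applying the core claim inside $B_1$ and inside $B_2$ places $a_p$ simultaneously in $U_p(B_1) \leq F(B_1)$ and in $U_p(B_2) \leq F(B_2)$, and in particular shows $U_p(B_i) \neq 1$ for $i = 1, 2$. Fact \ref{jaligot} then forces $F(B_1) \cap F(B_2) = 1$, whence $a_p = 1$, a contradiction. Therefore the Borel subgroup containing $C\oo(a)$ is unique.

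I expect the delicate step to be the confinement of the toral primes of $B$ to $\tau$, i.e.\ that a copy of $\Z(p^\infty)$ in $B/F\oo(B)$ genuinely lifts to a $p$-torus of $G$; this is precisely where the definition of $\tau'$ (through conjugacy of decent tori) enters, and it is what forbids any toral contribution to a $\tau'$-element. The remaining ingredients---solvability of $C\oo(a)$, divisibility of $B/F\oo(B)$, the torus/unipotent splitting of connected nilpotent torsion, and $U_p(\cdot) \leq F(\cdot)$---are standard structural facts, so that once the core claim is in place both conclusions drop out as routine applications of Facts \ref{BCx} and \ref{jaligot}.
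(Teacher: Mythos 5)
Your proposal is correct and follows essentially the same route as the paper's proof: membership of $a$ in every Borel containing $C\oo(a)$ via Fact \ref{BCx}, unipotence of the primary components of $a$ from the absence of $\tau'$-tori in a connected solvable group, and uniqueness by feeding the resulting nontrivial element of $F(B_1) \cap F(B_2)$ into Fact \ref{jaligot}. The only difference is that you expand the middle step --- which the paper dispatches in one line as standard structure theory --- into an explicit argument through the divisible abelian quotient $B/F\oo(B)$ and the lifting of Pr\"ufer $p$-groups to $p$-tori, both of which are indeed standard for connected solvable groups of finite Morley rank.
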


\begin{proof}
Let $B_a$ be a Borel subgroup containing $C^\o(a)$.
Then $a$ lies inside $B_a$ by Fact \ref{BCx}.
As $G \geq B$ has no divisible $\tau'$ torsion,
 $a$ lies inside $\Pi_p U_p(B_a)$, proving the second part.
So the first part follows from Fact \ref{jaligot}.
\end{proof}

We preserve this $B_a$ notation for the unique Borel containing $C^\o(a)$,
 when $a$ is $\tau'$, throughout the remainder of the article.%
\margin{$B_a$}

\begin{lemma}\label{UpCT=1}
If $p \in \tau'$ divides $|W|$, then $C(T)$ is $p^\perp$.
\end{lemma}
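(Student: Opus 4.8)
The plan is to argue by contradiction, converting the hypothesis $p \mid |W|$ into a divisibility that is forbidden by unipotent self-normalisation (Fact \ref{unipotentselfnormalization}). Suppose $C(T)$ is not $p^\perp$. Since $p \in \tau'$ there is no copy of $\Z(p^\infty)$ in $G$, hence no $p$-torus anywhere; so any nontrivial $p$-element $c$ of the connected solvable group $C(T)$ is a $\tau'$-element and, by Lemma \ref{Ba}, is $p$-unipotent. As the $p$-unipotent elements of a connected solvable group lie in its $p$-unipotent radical, we obtain $U := U_p(C(T)) \neq 1$. This is the one step where $p \in \tau'$ is genuinely used: it guarantees that the $p$-torsion we are assuming is unipotent rather than toral.

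Next I would pin down a canonical Borel. The subgroup $U$ is characteristic in $C(T)$, which is connected (Fact \ref{CTconnected}), solvable (by minimal simplicity), and proper (else $T \leq Z(G) = 1$); moreover $C(T) \normal N(T)$, so $U \normal N(T)$. Thus $C(T)$ lies in some Borel $B$, and every Borel $B'$ containing $C(T)$ contains the nontrivial $p$-unipotent group $U$, forcing $U \leq F(B')$ and $U_p(B') \neq 1$. If two distinct Borels $B_1, B_2$ both contained $C(T)$, then Fact \ref{jaligot} would give $U \leq F(B_1) \cap F(B_2) = 1$, absurd; hence $B$ is the \emph{unique} Borel containing $C(T)$. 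Since $N(T)$ permutes the Borels containing the invariant subgroup $C(T)$, uniqueness yields $N(T) \leq N_G(B)$, while $U_p(B) \supseteq U \neq 1$.

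Finally I would count. As $B$ is connected solvable and $T$ is a maximal decent torus of $B$, the triviality of the Weyl group of a connected solvable group (Fact \ref{selfnormsolv}, applied as in Lemma \ref{lem:Weyl_increase}) gives $N(T) \cap B = N_B(T) = C_B(T) = C(T)$, the last equality because $C(T) \leq B$. Consequently the map $N(T) \to N_G(B)/B$ sending $n \mapsto nB$ has kernel exactly $C(T)$, and so induces an embedding $W = N(T)/C(T) \hookrightarrow N_G(B)/B$. Then $p \mid |W|$ forces $p \mid [N_G(B):B]$, contradicting Fact \ref{unipotentselfnormalization} since $U_p(B) \neq 1$. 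This contradiction shows $C(T)$ is $p^\perp$.

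The main obstacle is the first paragraph: translating the abstract assumption that $C(T)$ carries $p$-torsion into the concrete statement $U_p(C(T)) \neq 1$, i.e.\ checking that in the absence of a $p$-torus the offending torsion must sit inside the $p$-unipotent radical. Once $U \neq 1$ is secured, the uniqueness of $B$ via Fact \ref{jaligot}, the embedding $W \hookrightarrow N_G(B)/B$, and the clash with Fact \ref{unipotentselfnormalization} are all routine.
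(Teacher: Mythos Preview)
Your proof is correct. Both your argument and the paper's hinge on the same opening: assuming $C(T)$ is not $p^\perp$, the hypothesis $p \in \tau'$ forces $U := U_p(C(T)) \neq 1$, and then Fact~\ref{jaligot} pins down a unique Borel $B$ with $U_p(B) \neq 1$ containing $C(T)$. From there the two arguments diverge slightly. You keep the Weyl group upstairs, embed $W \hookrightarrow N_G(B)/B$, and finish with Fact~\ref{unipotentselfnormalization}. The paper instead lifts an order-$p$ element of $W$ to a Weylian $p$-element $a \in N(T)$ via Fact~\ref{f:nodeftorsion}, identifies $B$ with the Borel $B_a$ of Lemma~\ref{Ba}, and obtains the contradiction directly from Fact~\ref{selfnormsolv}, since $a \in B_a$ and $T \leq B_a$ cannot coexist in a connected solvable group. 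Your route avoids the explicit lifting and repackages the same tension as a divisibility statement; the paper's route is terser because it reuses the $B_a$ machinery just introduced. The two endgames are essentially equivalent, as Fact~\ref{unipotentselfnormalization} is itself derived from Fact~\ref{selfnormsolv}.
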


We recall the usual torsion lifting lemma.

\begin{fact}[{\cite[Ex.~11 p.~93; Ex.~13c p.~72]{BN}}]\label{f:nodeftorsion}
Let $G$ be a group of finite Morley rank and let $H \normal G$ be a
definable subgroup.  If $x\in G$ is an element such that $\bar{x}\in G/H$
is a $p$-element for some prime $p$, then $x H$ contains a $p$-element.
Furthermore, if $H$ and $G/H$ are $p^\perp$-groups, then $G$ is a $p^\perp$-group.
\end{fact}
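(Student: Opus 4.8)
The plan is to dispose of the second assertion first, since it is immediate, and then to concentrate on the torsion-lifting statement, which carries all the content.

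For the second claim, suppose $G$ has a nontrivial $p$-element; then it has an element $g$ of order exactly $p$. Its image $\bar g$ in $G/H$ is a $p$-element, so if $G/H$ is $p^\perp$ we get $\bar g = 1$, i.e. $g \in H$; but $H$ is $p^\perp$, which forces $g = 1$, a contradiction. Hence $G$ is $p^\perp$. No rank theory is needed beyond the observation that homomorphic images of $p$-elements are $p$-elements.

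For the first claim the strategy is to reduce to an abelian quotient and then exploit the $p$-primary structure of abelian groups of finite Morley rank. First I would replace $G$ by the definable hull $A := d(x)$, which is abelian because the definable hull of a cyclic subgroup is abelian, and replace $H$ by $A_0 := A \cap H$. This reduces matters to showing that, in an abelian group $A$ of finite Morley rank with a definable subgroup $A_0$, every $p$-element of $A/A_0$ lifts to a $p$-element of $A$. I would then invoke the standard structure theory (from \cite{BN}) to split $A = A_p \oplus A_{p'}$ into its $p$-primary part $A_p$ — a $p$-group, namely the central product of a $p$-torus $\Z(p^\infty)^n$ with a bounded-exponent $p$-unipotent part — and a definable complement $A_{p'}$ that is $p^\perp$. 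Writing $x = x_p\, x_{p'}$ accordingly, the factor $x_p$ is automatically a $p$-element, so it suffices to see that $\overline{x_{p'}} = 1$, i.e. that the image $\overline{A_{p'}}$ carries no $p$-torsion. Indeed, as $\bar x$, $\overline{x_p}$, $\overline{x_{p'}}$ lie in an abelian group and the first two have $p$-power order, $\overline{x_{p'}}$ is a $p$-element; lying in a $p^\perp$ group it must then be trivial, giving $\bar x = \overline{x_p}$ and exhibiting $x_p$ as the desired $p$-element of the coset $xH$.

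The crux, and the one step I expect to demand genuine care, is showing that $\overline{A_{p'}} = A_{p'}/(A_{p'}\cap A_0)$ inherits the $p^\perp$ property. The clean route is to note that a $p^\perp$ abelian group of finite Morley rank is $p$-divisible and $p$-torsion-free, i.e. the map $a \mapsto a^p$ is an automorphism: on the divisible part this is clear, and on the bounded-exponent part it holds because that exponent is prime to $p$. The same applies to the definable subgroup $A_{p'}\cap A_0$, so $a \mapsto a^p$ is also an automorphism of the quotient $\overline{A_{p'}}$, whence that quotient has no $p$-torsion. The only remaining subtlety is the structure-theoretic bookkeeping — ensuring that $A_p$ and its complement $A_{p'}$ are genuinely definable — which is routine.
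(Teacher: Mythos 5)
Your treatment of the ``Furthermore'' clause is correct (and needs no rank theory at all), and the reduction to $A = d(x)$ with $A_0 = A \cap H$ is fine. The genuine gap is precisely the step you dismiss as routine bookkeeping: a decomposition $A = A_p \oplus A_{p'}$ with $A_p$ the $p$-primary part and $A_{p'}$ a \emph{definable} $p^\perp$ complement does not exist in general. The $p$-primary part of an abelian group of finite Morley rank contains a $p$-torus, and $p$-tori are typically not definable --- this failure is the very reason the subject works with \emph{decent} tori (definable hulls of torsion) in the first place. Concretely, let $K$ be an algebraically closed field of characteristic $q \neq p$ and take $A = K^{*}$: this group is strongly minimal, so its only definable subgroups are the finite ones and $K^{*}$ itself, yet its $p$-primary part is $\Z(p^\infty)$, which is infinite; hence neither $A_p$ nor any complement of it (necessarily infinite) can be definable. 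This situation does arise from your reduction, since $d(x) = K^{*}$ for any non-torsion $x \in K^{*}$.

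Nor is definability a removable convenience in your argument: the crux applies the statement ``a $p^\perp$ abelian group of finite Morley rank is $p$-divisible'' to $A_{p'} \cap A_0$, which is illegitimate for an abstract complement (an abstract subgroup of a $p$-divisible group need not be $p$-divisible), and for badly chosen complements the conclusion is outright false. For instance, in $A = K^{*} \times K^{*}$ with $A_0$ the diagonal, fix a complement $C$ of $\Z(p^\infty)$ in $K^{*}$ and a surjective homomorphism $\phi \colon C \to \Z(p^\infty)$; then $A_{p'} := \{ (c_1, c_2\,\phi(c_1)) : c_1, c_2 \in C \}$ is a complement of $A_p = \Z(p^\infty)^2$, but taking $c_1$ with $\phi(c_1)$ of order $p$, the element $a = (c_1, c_1\phi(c_1))$ lies in $A_{p'} \setminus A_0$ while $a^p \in A_0$, so the image of $A_{p'}$ in $A/A_0$ has $p$-torsion. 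The standard repair uses the decomposition that finite Morley rank theory actually provides --- divisible $\oplus$ bounded exponent, with the divisible part definable --- applied inside $H$ rather than to all of $A$: since $\bar{x}$ has order $p^k$, the element $y := x^{p^k}$ lies in $H$, so $d(y) \leq H$ and $d(y) = D \oplus B$ with $D$ definable divisible and $B$ of bounded exponent; writing $y = y_D y_B$ and choosing $z \in D \leq H$ with $z^{p^k} = y_D$ (all these elements commute, as $d(y) \leq d(x)$), one gets $(xz^{-1})^{p^k} = y_B$, so $xz^{-1}$ is a torsion element; its $p'$-part $t$ maps onto a power of $\bar{x}$, hence is simultaneously a $p$- and a $p'$-element of $G/H$, forcing $t \in H$, and then the $p$-part $s = (xz^{-1})t^{-1}$ is the desired $p$-element of $xH$. (The paper itself offers no proof to compare against --- it cites the exercises in \cite{BN} --- and the argument just sketched is the expected solution of those exercises.)
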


\begin{proof}[Proof of Lemma \ref{UpCT=1}]
There is a Weylian $p$-element $a$ normalizing $C(T)$ by Fact \ref{f:nodeftorsion}.
Clearly $U_p( C(T) ) \neq 1$ since $T$ is $\pi^\perp$ by definition
 (see \cite[\S6.4]{BN}).
If $U_p( C(T) ) \neq 1$,
 then $T \leq C(T) \leq B_a$ by Fact \ref{jaligot}.
As $a\in B_a$ too,
 this contradicts the fact that connected solvable groups
 have trivial Weyl groups (Fact \ref{selfnormsolv}).
\end{proof}

\begin{lemma}\label{a_not_piT}
If $a \in G$ is Weylian,
 and $U_p( C^\o(a) ) \neq 1$ for each prime divisor $p$ of $|a|$,
then $a$ is a $\tau'$-element.
\end{lemma}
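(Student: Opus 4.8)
The plan is to treat one prime at a time and then rule out its occurrence in $T$. First I would fix a prime divisor $p$ of $|a|$ and replace $a$ by its $p$-part $b := a_p$. Since $b \in d(a) \leq N(T)$ for a maximal decent torus $T$ normalized by $a$, the element $b$ is again Weylian; and as $C^\o(a) \leq C^\o(b)$ we inherit $U_p(C^\o(b)) \supseteq U_p(C^\o(a)) \neq 1$. Because $a$ is a $\tau'$-element precisely when every prime divisor of $|a|$ lies in $\tau'$, it suffices to prove the one-prime version: a Weylian $p$-element $b$ with $U_p(C^\o(b)) \neq 1$ has $p \in \tau'$.

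Suppose instead $p \in \tau$. By conjugacy of maximal decent tori, the torus $T$ normalized by $b$ has a nontrivial Sylow $p$-torus $S := T_p$; being characteristic in $T$, it too is normalized by $b$. Set $U := U_p(C^\o(b)) \neq 1$; as $b$ centralizes $C^\o(b)$, it centralizes $U$. Since $b$ is torsion, Fact \ref{BCx} places $b$ in any Borel subgroup $B$ containing $C^\o(b)$, and then $U \leq U_p(B)$, so $U_p(B) \neq 1$. I would next examine the action of $b$ on $S$ through the subtorus $S_0 := C_S(b)^\o \leq C^\o(b) \leq B$: when $b$ acts nontrivially Fact \ref{irrpol} forces the \Prufer $p$-rank of $S$ to be at least $p-1$, while when $b$ centralizes $S$ the whole of $S$ sits inside $B$.

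The intended contradiction is the very mechanism sketched for Corollary \ref{BW} in the introduction: a Borel $B$ with $U_p(B) \neq 1$ that meets a maximal decent torus cannot tolerate a nontrivial Weyl element, since such an element can live neither inside $B$ --- connected solvable groups have trivial Weyl groups, whence $N(T) \cap B = N_B(T) = T \leq C(T)$ by Fact \ref{selfnormsolv} --- nor normalise $B$ from outside, a $p$-element of $N_G(B) \setminus B$ contradicting $p \nmid [N_G(B):B]$ by Fact \ref{unipotentselfnormalization}. Playing these two facts against each other is what should force $p \notin \tau$, dually confirming that the $p$-unipotence exhibited in $C^\o(b)$ is incompatible with $b$ being toral, in the spirit of Fact \ref{f:torality}.

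I expect the heart of the difficulty to be promoting the Sylow $p$-torus $S$ (or the fixed subtorus $S_0$) sitting inside $B$ to a full maximal decent torus of $G$ inside $B$, together with securing a genuinely nontrivial $p$-element of the Weyl group normalising that Borel, so that Facts \ref{selfnormsolv} and \ref{unipotentselfnormalization} can actually be confronted. The case where $b$ moves $S$ nontrivially, controlled only by the \Prufer-rank estimate of Fact \ref{irrpol}, looks the most resistant and is where I would expect to spend the most effort.
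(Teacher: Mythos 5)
Your opening moves match the paper's: reducing to the $p$-part $b$ of $a$, noting $U_p(C^\o(b)) \neq 1$, and placing $b$ inside a Borel subgroup $B \geq C^\o(b)$ with $U_p(B) \neq 1$ (your use of Fact \ref{BCx} here is fine, indeed cleaner than the paper's citation). But from there the proposal has a genuine gap, and it sits exactly where you flag it. Your dichotomy --- ``$b$ centralizes $S$'' versus ``$b$ moves $S$, controlled by Fact \ref{irrpol}'' --- cannot be closed. In the moving case, Fact \ref{irrpol} only gives a \emph{lower} bound on the \Prufer $p$-rank of $S$, and no upper bound is available at this stage of the paper: bounds of that kind (e.g.\ in Corollary \ref{BW}) are proved \emph{later} and depend on this very lemma through Corollary \ref{r_not_piT}, so that route is both a dead end and circular. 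The idea you are missing is \cite[Corollary 6.20]{BN}: a $p$-element acting on a nontrivial $p$-torus always has infinite fixed points, so there is always some $t \in S$ of order $p$ with $[b,t]=1$. The subgroup $S_0 = C_S(b)^\o$ you introduce is automatically nontrivial, the dichotomy is unnecessary, and Fact \ref{irrpol} plays no role in this lemma.

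The second missing piece is how to turn the fixed point into a contradiction; the paper never needs the global ``generic covering / Weyl group embeds in $N(B)/B$'' mechanism from the sketch of Corollary \ref{BW} that you appeal to, nor any promotion of $S$ to a maximal decent torus ``by hand''. Instead, one bootstraps with $t$: since $[b,t]=1$, the nontrivial $p$-element $b$ lies in $B \cap B^t$, so $t \in N(B)$ by Fact \ref{jaligot}, and then $t \in B$ by Fact \ref{unipotentselfnormalization}; next $C_{U_p(B)}(t) \neq 1$ by \cite[Corollary 6.20]{BN} again, so Fact \ref{jaligot} makes $B$ the \emph{unique} Borel subgroup containing $C^\o(t)$; finally, since $t \in T$ and $T$ is connected abelian, $T \leq C^\o(t) \leq B$, i.e.\ the whole maximal decent torus is dragged into $B$. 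The contradiction is then local: $B$ is connected solvable, contains $T$, and contains $b \in N(T) \setminus C(T)$ (no nontrivial power of a torsion element of $N(T)\setminus C(T)$ lies in $C(T)$, by Lemma \ref{lem:constant_order}), contradicting Fact \ref{selfnormsolv}. Note also that Fact \ref{selfnormsolv} yields $N_B(T) = C_B(T)$, not $N_B(T) = T$ as you wrote; the Weyl-violating element inside $B$ is $b$ itself, and no element of $N(B) \setminus B$ is ever needed.
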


\begin{proof}
Suppose that some prime $p \in \tau$ divides $|a|$.
Then there is a power $b$ of $a$ which is a $p$-element.
We may assume that $a$, and $b$, normalize $T$,
 by conjugacy of decent tori \cite{Ch05}.
By \cite[Corollary 6.20]{BN}, 
 there must be $t \in T^\#$ of order $p$ such that $[b, t] = 1$.

By assumption,
 the Borel subgroup $B$ containing $C^\o(b)$
 satisfies $U_p(B) \neq 1$. 
It follows from Fact \ref{unipotentselfnormalization} that $b \in B$.
As $b \in B \cap B^t$, $t \in N(B)$ by Fact \ref{jaligot}.
So again $t \in B$ using Fact \ref{unipotentselfnormalization}.

As $t$ has order $p$ too,
 it follows that $C_{U_p(B)}^\o(t) \neq 1$ by \cite[Corollary 6.20]{BN}. 
So again $T \leq C^\o(t) \leq B$ by Fact \ref{jaligot}.
Now once more we contradict the fact that connected solvable groups
 have trivial Weyl groups (Fact \ref{selfnormsolv}).

So there is no $p$-torus in $T$, or hence in $G$, as desired.
\end{proof}

Now consider the minimal prime divisor $r$ of $|W|$,
 which is nontrivial by hypothesis.%
\margin{$r$}
By Fact \ref{f:Weyl_p},
 the centralizer of any $r$-element $a$ representing an $r$-element of $W$
 has a nontrivial $r$-unipotent subgroup. 
So Lemma \ref{a_not_piT} says :

\begin{corollary}\label{r_not_piT}
$r \in \tau'$.
\end{corollary}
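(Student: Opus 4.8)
The plan is to manufacture a single Weylian $r$-element whose centralizer carries $r$-unipotence, and then feed it directly into Lemma \ref{a_not_piT}. First I would record that $|W|$ is odd. Since $G$ has degenerate type its Sylow $2$-subgroups are trivial, so $N(T)$ contains no $2$-torsion; by the torsion-lifting Fact \ref{f:nodeftorsion}, a hypothetical $2$-element of $W = N(T)/C(T)$ would lift to a $2$-element of $N(T)$, which is impossible. As $W$ is nontrivial by hypothesis, its minimal prime divisor $r$ is therefore an odd prime, and the standing hypotheses of Fact \ref{f:Weyl_p} are met.

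Next, by Cauchy's theorem $W$ has an element $\bar{a}$ of order $r$, and Fact \ref{f:nodeftorsion} lets me lift it to an $r$-element $a \in N(T)$ with $\bar{a} = a\,C(T)$ of order $r$ in $W$. Such an $a$ is Weylian by construction, since it normalizes $T$, and its order $|a|$ is a power of $r$, so $r$ is the only prime dividing $|a|$.

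Then I would apply Fact \ref{f:Weyl_p} to this particular $a$: it furnishes a nontrivial unipotent $r$-subgroup inside the centralizer of any $r$-element representing an $r$-element of $W$, that is $U_r(C^\o(a)) \neq 1$. Because $r$ is the sole prime divisor of $|a|$, this single instance already verifies the full hypothesis ``$U_p(C^\o(a)) \neq 1$ for every prime divisor $p$ of $|a|$'' demanded by Lemma \ref{a_not_piT}. That lemma then concludes $a$ is a $\tau'$-element; and since $a$ is an $r$-element, this forces $r \in \tau'$, which is exactly the claim.

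The only genuinely delicate point is aligning the hypotheses across the two imported results. Fact \ref{f:Weyl_p} requires $|W|$ to have odd order, which is handled by the degenerate-type observation above, while Lemma \ref{a_not_piT} requires unipotence at \emph{every} prime divisor of $|a|$. This is precisely why it is essential to take $a$ to be a pure $r$-element rather than an arbitrary torsion lift of an $r$-element of $W$: purity collapses the latter condition to the single instance supplied by Fact \ref{f:Weyl_p}, so no further genericity or intersection analysis is needed.
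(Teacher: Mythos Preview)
Your proof is correct and follows essentially the same route as the paper: take an $r$-element $a$ representing an $r$-element of $W$, invoke Fact~\ref{f:Weyl_p} to obtain $U_r(C^\o(a))\neq 1$, and then feed this into Lemma~\ref{a_not_piT} to conclude that $a$ is a $\tau'$-element, whence $r\in\tau'$. Your explicit verification that $|W|$ is odd and your careful lifting via Fact~\ref{f:nodeftorsion} simply spell out details that the paper leaves implicit under the standing degenerate-type hypothesis of Theorem~\ref{HE}.
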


\noindent In particular, there are Weylian $\tau'$-elements.

In \S\ref{sec:Cyclicity} we shall prove that any element of
 the Weyl group lifts to a Weylian $\tau'$-element of $G$.

\subsection{Carter subgroups}

We next argue that the Cartan subgroup $C(T)$
 is in fact a Carter subgroup of $G$.

\begin{lemma}\label{CTW=1}
If $a$ is a Weylian $\tau'$-element for $T$, then $C_{C(T)}(a)$ is trivial.
\end{lemma}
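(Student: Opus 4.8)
The plan is to assume $C_{C(T)}(a) \neq 1$ and derive a contradiction by forcing both $T$ and $a$ into a single connected solvable subgroup, which is impossible since $a$ normalizes $T$ without centralizing it. Here I read the hypothesis as $a \in N(T) \setminus C(T)$, i.e.\ that $a$ represents a nontrivial element of $W$; otherwise $a$ itself lies in $C_{C(T)}(a)$ and the statement is vacuous.

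First I would reduce to the case that $a$ has prime order. As $a$ is torsion and lies in $N(T)\setminus C(T)$, Lemma \ref{lem:constant_order} gives $\langle a\rangle \cap C(T)=1$; so for any prime $p\mid|a|$ the power $b:=a^{|a|/p}$ is a nontrivial $p$-element still in $N(T)\setminus C(T)$, and it remains a Weylian $\tau'$-element because both $\tau'$ and being Weylian are inherited by powers. Since $C_{C(T)}(a)\le C_{C(T)}(b)$, it suffices to contradict $C_{C(T)}(b)\neq 1$, so I may assume $a$ has prime order $p\in\tau'$. Then Lemma \ref{Ba} writes $a$ as a product of commuting unipotent elements of $B_a$ of distinct prime orders, and a $p$-element of such a form must lie in $U_p(B_a)$; in particular $U_p(B_a)\neq 1$.

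The heart of the argument is to show $T\le B_a$. Fix $1\neq d\in C_{C(T)}(a)$. Since $T$ is central in $C(T)$ and $d\in C(T)$, the torus $T$ centralizes $d$, so $T\le C^{\circ}_G(d)=:L$, a connected solvable subgroup (proper in $G$ because $C^{\circ}_G(d)=G$ would force $d\in Z(G)=1$). On the other hand $a\in U_p(B_a)$ centralizes $d$, so $a\in C_{U_p(B_a)}(d)\neq 1$, and Lemma \ref{CUpBt_inf} makes $C_{U_p(B_a)}(d)$ infinite; its connected component $V$ is a nontrivial $p$-unipotent subgroup contained in $L$. Picking a Borel $B'\ge L$ gives $V\le U_p(B')$ with $U_p(B')\neq 1$, while also $V\le U_p(B_a)$. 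Thus $1\neq V\le F(B_a)\cap F(B')$, so Fact \ref{interpperp} forces $B'=B_a$, whence $T\le L\le B_a$.

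Finally, $a\in B_a$ and $T\le B_a$ with $B_a$ connected solvable, so by the triviality of Weyl groups in connected solvable groups (Fact \ref{selfnormsolv}, applied exactly as in Lemma \ref{lem:Weyl_increase} and using that $C_{B_a}(T)$ is connected by Fact \ref{CTconnected}) we obtain $N_{B_a}(T)=C_{B_a}(T)\le C(T)$, hence $a\in C(T)$, contradicting $a\in N(T)\setminus C(T)$. I expect the main obstacle to be the third paragraph: the key insight is that a single nontrivial $d\in C_{C(T)}(a)$ simultaneously drags $T$ into its connected centralizer and, through $a\in U_p(B_a)$, pins that centralizer's Borel to $B_a$ by the Fitting intersection of Fact \ref{interpperp}.
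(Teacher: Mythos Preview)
Your proof is correct and follows essentially the same route as the paper's: reduce to $a$ of prime order $p$ so that $a\in U_p(B_a)$, take a nontrivial $d\in C_{C(T)}(a)$, use Lemma~\ref{CUpBt_inf} to get an infinite $p$-unipotent piece inside $C^\circ(d)$, and then apply Fact~\ref{jaligot} to force $T\le C^\circ(d)\le B_a$, contradicting Fact~\ref{selfnormsolv}. Your version simply spells out the reduction to prime order and the Fitting-intersection step more explicitly than the paper does, and your remark that one must read $a\in N(T)\setminus C(T)$ is apt, since the paper's contradiction relies on exactly this.
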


\begin{proof}
We may assume that $a$ has prime order $p$.
Again let $B_a$ be the unique Borel containing $C^\o(a)$.
Consider some $x \in C_{C(T)}(a)^\#$.
By Lemma \ref{CUpBt_inf}, $C_{U_p(B_a)}(x)$ is infinite.
Thus $T \leq C^\o(x) \leq B_a$ by Fact \ref{jaligot}.
Here again we contradict the fact that connected solvable groups
 have trivial Wely groups (Fact \ref{selfnormsolv}).
Therefore $C_{C(T)}(a) = 1$, as desired.
\end{proof}

\begin{corollary}\label{CT=Q}
$C(T)$ is a Carter subgroup of $G$.
\end{corollary}

\begin{proof}
By Corollary \ref{r_not_piT},
 there is a Weylian $\tau'$-element $a$.
So $C_{C(T)}(a) = 1$ by Lemma \ref{CTW=1}.
It follows from Fact \ref{solvnilp} that $C(T)$ is nilpotent.
So the result follows because
 $C(T)$ is almost self-normalizing by \cite[Theorem 6.16]{BN}.
\end{proof}

\subsection{Invariance}

We finally show that the Cartan subgroup $C(T)$ is a Borel subgroup in certain cases.
Consider a Borel subgroup $B_T$ containing $C(T)$.%
\margin{$B_T$}

\begin{proposition}\label{goingup}
$B_T$ can be chosen to be $W$-invariant in either of the following cases :
\begin{itemize}
\item[(i)] if $C(T)$ is not abelian; or
\item[(ii)] if the \Prufer $q$-rank of $T$ is $\geq 3$ for prime $q$.
\end{itemize}
\end{proposition}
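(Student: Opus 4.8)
The plan is to let $W$ act on the set $\mathcal{B}$ of Borel subgroups containing $C(T)$ and to exhibit a fixed point. Since $C(T)=C^{\circ}(T)$ is connected (Fact \ref{CTconnected}) and normalized by $N(T)$ (because $C(nTn^{-1})=C(T)$), the group $N(T)$ permutes $\mathcal{B}$; as $C(T)$ lies inside every member of $\mathcal{B}$ it acts trivially, so the action factors through $W=N(T)/C(T)$. A $W$-invariant Borel containing $C(T)$ is then precisely a $W$-fixed point of $\mathcal{B}$, so it suffices to produce one. The engine for this is uniqueness: any nontrivial $N(T)$-invariant unipotent subgroup that is normal in a Borel with \emph{maximal} unipotence parameter pins that Borel down uniquely --- through Fact \ref{jaligot} in positive characteristic (two Borels sharing a nontrivial $p$-unipotent subgroup in their Fitting subgroups must coincide) and through Fact \ref{Marseilles} in general --- and uniqueness together with $N(T)$-invariance of the defining subgroup forces the Borel to be $W$-invariant.

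First I would treat case (i). Here $C(T)$ is a nonabelian nilpotent Carter subgroup (Corollary \ref{CT=Q}) in which $T$ is central, so the noncommutativity is carried by the unipotent part; hence the top--parameter unipotent radical $A:=U_{\tilde q}(C(T))$ is nontrivial and characteristic in $C(T)$, thus $N(T)$-invariant. If $A$ is $p$-unipotent for some prime $p$, Fact \ref{jaligot} already gives $|\mathcal{B}|=1$ and we are done. In general I would pass to the proper connected solvable subgroup $N_G^{\circ}(A)$ --- proper because $G$ is simple and $A\neq 1$ --- which is $N(T)$-invariant and contains $C(T)$, and then iterate the operator $H\mapsto N_G^{\circ}\bigl(U_{\tilde p(H)}(H)\bigr)$ sending a connected solvable $H$ to the connected normalizer of its maximal--parameter unipotent radical. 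Each step is $N(T)$-invariant, proper, and increasing, so by the rank bound the chain stabilizes at an $N(T)$-invariant connected solvable $\hat{M}\supseteq C(T)$ equal to the connected normalizer of its own maximal-parameter radical $U$. Fact \ref{Marseilles} applied to $U\trianglelefteq\hat M$ then identifies $\hat M$ as the unique Borel carrying $U$ normally at maximal parameter --- \emph{once we know $\hat M$ is a Borel} --- and its uniqueness yields $W$-invariance.

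For case (ii) I would instead read off the invariant data from the torus. Using the $p$-adic (Tate--module) description of the faithful $W$-action on $T$ exploited in Fact \ref{irrpol}, the hypothesis that some \Prufer $q$-rank is $\geq 3$ should be used to locate a nontrivial $N(T)$-invariant subtorus $S\leq T$ (for instance a $W$-fixed summand), whose centralizer $C_G^{\circ}(S)$ is then a proper, $N(T)$-invariant, connected solvable overgroup of $C(T)$ carrying nontrivial unipotence. This feeds the same uniqueness engine as in case (i): one applies the normalizer-of-maximal-unipotent operator to reach an $N(T)$-invariant $\hat M$ and invokes Fact \ref{Marseilles}.

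The main obstacle, in both cases, is this last step: upgrading the maximal $N(T)$-invariant connected solvable subgroup $\hat M$ to a genuine Borel, equivalently matching the maximal unipotence parameter of an ambient Borel $B\geq\hat M$ with that of $\hat M$ so that Facts \ref{jaligot}/\ref{Marseilles} apply to $B$ itself. This is genuinely necessary rather than formal: because Carter subgroups are self-normalizing (Fact \ref{selfnormsolv}), $C(T)$ is always a maximal $N(T)$-invariant connected solvable subgroup, and in the $\mathrm{PSL}_2$-type degeneracy --- $C(T)=T$ abelian of small \Prufer rank --- no $W$-invariant Borel need exist at all. Hypotheses (i) and (ii) are exactly what exclude that degeneracy: nonabelianness forces nontrivial unipotence inside $C(T)$, while \Prufer rank $\geq 3$ forces nontrivial invariant structure around $T$, in each case providing enough normal unipotence at the top parameter to certify that $\hat M$ is a Borel. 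I expect verifying this parameter match --- ruling out that an ambient Borel has strictly larger unipotence than the invariant subgroup produces --- to be where the real work lies.
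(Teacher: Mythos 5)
Your proposal does not close, and you flag the gap yourself: everything funnels into the step of certifying that your $N(T)$-invariant connected solvable subgroup $\hat M$ is actually a Borel subgroup (equivalently, matching unipotence parameters so that Fact~\ref{Marseilles} applies to an ambient Borel), and you supply no argument for it --- you only ``expect'' this to be where the real work lies. It is, and the paper resolves it by a device absent from your proposal: among all Borel subgroups containing $C(T)$, one \emph{chooses} $B_T$ so as to maximize its maximal unipotence parameter $\tilde q_T$. Then any other Borel $B \geq C(T)$ that contains some nontrivial $\tilde q_T$-subgroup automatically admits $\tilde q_T$ as a maximal parameter (it cannot have a larger one, by the choice of $B_T$), so Fact~\ref{Marseilles} can be applied to identify $B$ with $B_T$; your normalizer-iteration subgroup $\hat M$, by contrast, need never be a Borel, and nothing in your construction forces the parameter match. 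Moreover, for case (i) you missed that no machinery is needed at all: by Corollary~\ref{CT=Q}, $C(T)$ is a nonabelian \emph{nilpotent} subgroup, and \cite[Proposition 4.1ii]{Bu_Bender} states that a nilpotent subgroup of two distinct Borel subgroups is abelian; hence $B_T$ is the \emph{unique} Borel subgroup containing $C(T)$, and since $N(T)$ normalizes $C(T)$, uniqueness makes $B_T$ invariant under $W$. One line.

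Case (ii) of your proposal has a more serious defect than incompleteness: the plan is to extract from Pr\"ufer $q$-rank $\geq 3$ a nontrivial $W$-fixed (or $N(T)$-invariant) subtorus $S \leq T$, but no such subtorus need exist. Rank $\geq 3$ does not constrain the Tate-module action that way: a cyclic group of order $n$ dividing $q-1$ can act on $\Z(q^\infty)^3$ as $\zeta\cdot\mathrm{id}$ with $\zeta \in \Z_q^\times$ a primitive $n$-th root of unity, leaving no nonzero fixed points; and even granting such an $S$, you give no reason why $C^{\circ}(S)$ should carry nontrivial unipotence. The paper uses the rank hypothesis combinatorially, not representation-theoretically: with $B_T$ parameter-maximal as above, Fact~\ref{Cgeneration} produces a hyperplane $V \leq \Omega_1(T_q)$ of index $q$ with $X := C_{Y_T}(V) \neq 1$, where $Y_T = [U_{\tilde q_T}(Z(F^{\circ}(B_T))), B_T]$ is a homogeneous $\tilde q_T$-group (the case $Y_T = 1$ being handled directly by Fact~\ref{Marseilles}); then $X$ is normal in $B_T = F^{\circ}(B_T)\cdot C(T)$. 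For a lift $w \in N(T)$ of a Weyl element, rank $\geq 3$ guarantees that the two hyperplanes $V$ and $V^w$ meet nontrivially, so any $t \in (V \cap V^w)^{\#}$ satisfies $C^{\circ}(t) \geq X, X^w, C(T)$; a Borel $B_t \geq C^{\circ}(t)$ then contains nontrivial $\tilde q_T$-groups normal in $B_T$ and in $B_T^w$ respectively, and maximality of $\tilde q_T$ together with Fact~\ref{Marseilles} forces $B_T = B_t = B_T^w$. So the hypothesis enters as ``two hyperplanes in dimension $\geq 3$ intersect nontrivially,'' not as a fixed-point statement about the Weyl action, and the uniqueness engine runs on the maximizing choice of $B_T$ rather than on any invariantly constructed subgroup.
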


\begin{proof}
By \cite[Proposition 4.1ii]{Bu_Bender},
 every nilpotent subgroup of two distinct Borel subgroups is abelian.
So part (i) follows from Corollary \ref{CT=Q}.

Assume now (ii), i.e.\ suppose that $T$ has \Prufer $q$-rank $\geq 3$.
We may assume that no Borel subgroup containing $C^\o (T)$ is abelian,
 as otherwise $B_T = C(T)$ is clearly $W$-invariant.
In particular, every such Borel subgroup admits at least
 one unipotence parameter with $r > 0$
 \cite[Theorem 2.12]{Bu_signalizer,FJ_Conjugacy}
For each Borel subgroup $B_i$ containing $C(T)$,
 let $\tilde{q}_i$ be such a maximal unipotence parameter of $B_i$ with $r>0$.
Choose $B_T$ as to maximize its maximal unipotence parameter
 $\tilde{q}_T$ among all $\tilde{q}_i$.

Let $Y_T = [U_{\tilde{q}}(Z (F^\o (B_T))), B_T]$.
By \cite[Theorem 2.18]{Fre06,FJ_Conjugacy},
 $Y_T$ is a homogeneous $\tilde{q}_T$-subgroup of $B_T$.
Of course $Y_T$ is clearly characteristic in $B_T$ too.
If $Y_T = 1$, then $U_{\tilde{q}} (Z (F^\o (B_T))) \leq Z(B_T) \leq C(T)$, and
 Fact \ref{Marseilles} implies that
 $B_T$ is the only Borel subgroup containing $C(T)$.

So we may assume $Y_T \neq 1$.
Then, by Fact \ref{Cgeneration},
 there is an elementary abelian $q$-subgroup $V$ of index $q$
 in $\Omega_1(T_q) = \gen{ t \in T \mid |t| = q }$
 such that $X = C_{Y_T} (V) \neq 1$.
Now $X$ is central in $F^\o(B_T)$ and is actually normalized by $C(T)$.
As $C(T)$ contains a Carter subgroup of $B_T$,
 one has $B_T = F^\o (B_T)\cdot C(T)$, so $X$ is normal in $B_T$.

Consider a lifting $w \in N(T) \setminus C(T)$ of an element of $W$.
As $\pr_q(T) \geq 3$, there is some $t \in (V \cap V^w)^\#$ and $C^\o(t) \geq X, X^w$.
Let $B_t$ be any Borel subgroup containing $C^\o (t) \geq C(T)$.
Notice that $B_t \geq C^\o (t) \geq X, X^w$.
By maximality of $\tilde{q}_T$,
 it follows that $\tilde{q}_T$ is a maximal unipotence parameter of $B_t$.
In particular, Fact \ref{Marseilles} says $B_T = B_t = B_T^w$.
Consequently $B_T$ is $W$-invariant.
\end{proof}

\begin{proposition}\label{BTnilp}
Let $a$ be a Weylian $\tau'$-element for $T$.
If $a$ normalizes $B_T$,
 then $a$ has no fixed point in $B_T$, $B_T$ is nilpotent,
 and $C(T) = B_T$ is a Borel subgroup.
\end{proposition}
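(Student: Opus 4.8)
The plan is to prove the three conclusions in the order nilpotency, then $C(T)=B_T$, and finally re-derive the absence of fixed points cheaply. First I would record the standing reductions. Replacing $a$ by a suitable power, I may assume $a$ has prime order $p$; since $a$ is $\tau'$ we have $p\in\tau'$, and by Lemma \ref{lem:constant_order} every nontrivial power of $a$ still lies outside $C(T)$, so $a\in N(T)\setminus C(T)$ throughout. As $T\le C(T)\le B_T$ and connected solvable groups have trivial Weyl groups, Fact \ref{selfnormsolv} gives $N_{B_T}(T)=C_{B_T}(T)=C(T)$, whence $a\notin B_T$. Now the hypothesis $a\in N_G(B_T)$ forces $p\mid[N_G(B_T):B_T]$, so Fact \ref{unipotentselfnormalization} yields $U_p(B_T)=1$. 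Since $p\in\tau'$ there is no $p$-torus either, so the connected Sylow $p$-subgroup of $B_T$ is trivial and $B_T$ is $p^\perp$. This is the one place the normalization hypothesis enters, and it is essential: without it $B_T$ could carry $p$-unipotence and the argument below would collapse.

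Granting the key claim that $a$ acts without fixed points, i.e.\ $C_{B_T}(a)=1$, the remaining conclusions are formal. Finiteness of $C_{B_T}(a)$ together with Fact \ref{solvnilp} makes $B_T$ nilpotent. Then $C(T)$, which is a Carter subgroup of $G$ by Corollary \ref{CT=Q} and is contained in $B_T$, is a Carter subgroup of the connected nilpotent group $B_T$, whence $C(T)=B_T$; in particular $C(T)$ is a Borel subgroup. At that point the absence of fixed points is immediate once more, since $C_{B_T}(a)=C_{C(T)}(a)=1$ by Lemma \ref{CTW=1}. So the entire proposition reduces to the single claim $C^\circ_{B_T}(a)=1$.

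To establish this crux I would argue by contradiction, setting $D:=C^\circ_{B_T}(a)\ne 1$. Because $D\le C^\circ(a)\le B_a$ (Lemma \ref{Ba}) and $D\le B_T$, while $a\in B_a\setminus B_T$ forces $B_a\ne B_T$, the group $D$ lives in $B_a\cap B_T$; moreover $D\cap C(T)\le C_{C(T)}(a)=1$ by Lemma \ref{CTW=1}. Using the Carter decomposition $B_T=F^\circ(B_T)\cdot C(T)$ together with Fact \ref{f:C_v_mod_A} applied modulo $F^\circ(B_T)$ (which is $p^\perp$), the fixed-point-freeness of $a$ on $C(T)$ propagates to give $D\le F^\circ(B_T)$, so I may take $D$ to be a nontrivial connected abelian $p^\perp$ subgroup of $Z^\circ(F^\circ(B_T))$, centralized by $a$ and disjoint from $C(T)$. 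Since $a\in U_p(B_a)$ centralizes $D$, Lemma \ref{CUpBt_inf} yields, for each $y\in D^{\#}$, an infinite $p$-unipotent subgroup $V_y=C^\circ_{U_p(B_a)}(y)$; as $U_p(B_T)=1$ none of these $V_y$ can lie in $B_T$, and each forces the Borel containing $C^\circ(y)$ to equal $B_a$. The aim is to convert this $p$-unipotent overlap in $B_a$, played off against the complementary fixed-point-free action of $T$ on $D$, into two distinct Borel subgroups sharing nontrivial unipotence, contradicting Fact \ref{jaligot}.

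The main obstacle is exactly this last step: manufacturing the required Borel clash. The difficulty is that $B_T$ need only be $p^\perp$, so the overlap visible on the $B_a$ side has no direct $p$-unipotent counterpart inside $B_T$, and the fixed configuration is a priori compatible with a characteristic $\ne p$ bad field on which $T$ acts by scalars and $a$ by a field automorphism of order $p$ (precisely the configuration of Corollary \ref{BW}). Ruling this out — equivalently, showing that a Weylian $\tau'$-element normalizing $B_T$ cannot have a nontrivial connected centralizer in $F^\circ(B_T)$ — is the technical heart of the proof, and I expect it to require the Borel-intersection and unipotent-centralizer machinery (Fact \ref{jaligot} and Lemma \ref{CUpBt_inf}) pushed through every unipotence parameter rather than any soft fixed-point count.
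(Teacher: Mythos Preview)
Your reductions are sound: the passage to prime order, the observation that $a\notin B_T$, the deduction $U_p(B_T)=1$ via Fact~\ref{unipotentselfnormalization}, and the derivation of nilpotency and $C(T)=B_T$ from $C_{B_T}^\circ(a)=1$ all match the paper's logic. But you have not proved the crux, and you say as much: your final paragraph describes the obstacle rather than removing it. The attempt to pin $D$ inside $F^\circ(B_T)$ and then manufacture a clash of $p$-unipotent subgroups via Fact~\ref{jaligot} cannot close, for exactly the reason you identify --- $B_T$ carries no $p$-unipotence to clash with.

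The idea you are missing is that one should aim not at a direct Borel collision but at the conclusion that $H:=(B_T\cap B_a)^\circ$ is itself a Carter subgroup of $B_T$, hence of $G$; once that is known, Lemma~\ref{CTW=1} (applied to the maximal decent torus of $H$, which $a$ necessarily normalizes) kills $C_H(a)$, and with it your $D\le C_H(a)$. To reach that point the paper first invokes the maximal non-abelian intersection theory of \cite[Theorems~4.3 \&~4.5]{Bu_Bender}: were $H$ non-abelian, $F^\circ(B_a)$ would be divisible, contradicting $U_p(B_a)\ne1$. So $H$ is abelian, and Fact~\ref{f:CsumC} gives $H=K\oplus L$ with $K=C_H(a)$, $L=[H,a]$. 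One then picks a $B_a$-minimal $A\le U_p(B_a)$; since $a\in U_p(B_a)$ centralizes $A$ while $[L,a]=L$ and $L$ normalizes $A$, a commutator computation yields $[L,A]=1$. Now split on $[A,K]$. If $[A,K]=1$ then $A\le C^\circ(H)$, so by Fact~\ref{jaligot} the unique Borel over $C^\circ(H)$ is $B_a$, whence $N_{B_T}^\circ(H)\le N^\circ(B_a)\cap B_T=H$. If $[A,K]\ne1$, Zilber's field theorem together with \cite[Corollary~9]{Wag01} places a nontrivial decent torus $T_1$ inside $K\le C(a)$, forcing $C(T_1)\le B_a$; chasing through the maximal decent torus of $H$ again gives $N_{B_T}^\circ(H)\le B_a\cap B_T$, so $N_{B_T}^\circ(H)=H$. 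Either way $H$ is Carter in $B_T$, and Lemma~\ref{CTW=1} finishes. This is the ``technical heart'' you anticipated; the unipotence-parameter machinery you mention is not what resolves it.
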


\begin{proof}
We may assume that $a$ has order $p$. First notice that $B_a \neq B_T$
 as there is no Weyl group in a connected solvable group (Fact \ref{selfnormsolv}).
So let $H := (B_T \cap B_a)^\o$.

We claim that $H$ is abelian.
Otherwise \cite[Theorems 4.3 \& 4.5 (8)]{Bu_Bender}
 say that $B_a$ is involved in a maximal non-abelian intersection,
 and $F^\o (B_a)$ is divisible.
But this contradicts $U_p (B_a) \neq 1$. 
Hence $H$ is abelian.

As there is no toral $p$-torsion in $G$,
 $H$ must be $p^\perp$ by Fact \ref{jaligot}.  
Consider the action of the $p$-element $a$ on
 the definable abelian connected $p^\perp$ group $H$.
By Fact \ref{f:CsumC}, we may write $H = K \oplus L$
 with $K = C_H(a)$ and $L = [H, a]$.
Assume towards a contradiction that $K \neq 1$.

Let $A \leq U_p (B_a)$ be a $B_a$-minimal subgroup.
We observe that $[A, a] = 1$ since $a \in U_p (B_a)$.
Hence $[L, a] = L$, $[A, a] = 1$, and $L$ normalizes $A$.
A commutator computation proves $[L, A] = 1$.

Suppose first that $[A, K] = 1$.  Then $[A, H] = 1$.
So, by Fact \ref{jaligot},
 $B_a$ is the only Borel subgroup containing $C^\o(H)$.
This proves that $N_{B_T}^\o (H) = H$, whence $H$ is a Carter subgroup of $B_T$.
As Carter subgroups of $B_T$ are Carter subgroups of $G$, 
 Lemma \ref{CTW=1} and Corollary \ref{CT=Q} imply that
 $K = 1 = C_H^\o (a)$, a contradiction.

Hence $[A, K] \neq 1$.
It follows that some nontrivial definable section of $K$ embeds
 definably into the multiplicative subgroup of a field of characteristic $p$.
So, by \cite[Corollary 9]{Wag01}, no such section can be torsion-free.
It now follows that $K$ contains a non-trivial decent torus, say $T_1$.
Notice that $a \in C(T_1)$ forces $C(T_1) \leq B_a$.
Let $T_2 \geq T_1$ be the maximal decent torus of $H$.
Then $N^\o(H) \leq N^\o(T_2) = C(T_2) \leq C(T_1) \leq B_a$.
So $H$ is a Carter subgroup of $B_T$,
 yielding the same contradiction as above.

This proves that $K = 1$.  Hence $C_{B_T}^\o (a) = K = 1$.
By Fact \ref{solvnilp}, $B_T$ is nilpotent.
So Lemma \ref{CTW=1} implies that there is no centralization at all.
\end{proof}

\smallskip

We now prove Theorem \ref{HE}.

\begin{proof}[Proof of Theorem \ref{HE}]
Let $G$ be a minimal connected simple group of degenerate type,
 having a nontrivial maximal decent torus $T$
 and a nontrivial Weyl group $W = N(T)/C(T)$.
By Corollary \ref{CT=Q},
 a Cartan subgroup $C(T)$ is a Carter subgroup of $G$.
The converse is also true because 
 Carter subgroups are conjugate in $G$ \cite{Frecon_Carter_conjugacy}.

If either $C(T)$ is abelian or $T$ has \Prufer $q$-rank $\geq 3$ for some prime number $q$,
 then, by Proposition \ref{goingup}, there is a $W$-invariant Borel subgroup containing $C(T)$.
This Borel subgroup is then nilpotent by Proposition \ref{BTnilp}, 
 and hence equal to $C(T)$.
\end{proof}

\subsection{Consequences}

\begin{corollary}\label{BW}
Let $a$ be a Weylian $\tau'$-element normalizing $T$ and
 let $B_a$ be the unique Borel subgroup containing $C^\o(a)$.
Then:
\begin{itemize}
\item for each prime $p$ dividing $|a|$, $B_a$ interprets a bad field of characteristic $p$;
\item $B_a$ contains a nontrivial decent torus;
\item If $B_a$ contains a $q$-torus of \Prufer rank 2 for some prime $q$,
      then it contains a Carter subgroup of $G$;
\item $B_a$ does not contain a $q$-torus of \Prufer rank $\geq 3$ for any prime $q$.
\end{itemize}
\end{corollary}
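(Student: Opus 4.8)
The plan is to fix a prime $p \mid |a|$ and work with it throughout. By Lemma \ref{lem:constant_order} the image $\bar a \in W$ has order $|a|$, so $p$ divides $|W|$, and Lemma \ref{Ba} shows $U_p(B_a) \neq 1$. Two reductions come for free. First, $a \notin C(T)$ (otherwise $a \in C_{C(T)}(a) = 1$ by Lemma \ref{CTW=1}), so $a$ normalizes $T$ without centralizing it; consequently $T \not\leq B_a$, since $T \leq B_a$ would place a nontrivial Weyl group $N_{B_a}(T)/C_{B_a}(T)$ inside the connected solvable group $B_a$, against Fact \ref{selfnormsolv}. Second, as $p \in \tau'$ divides $|W|$, Lemma \ref{UpCT=1} tells us that $C(T)$, and hence every Carter subgroup of $G$, is $p^\perp$; I will lean on this for the last two bullets.

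For the bad field I would choose a $B_a$-minimal subgroup $A \leq Z(U_p(B_a))$, an elementary abelian $p$-group on which $B_a$ acts. The heart of the matter is the dichotomy on $C_{B_a}(A)$. If $A \leq Z(B_a)$ then $A$ is a central normal unipotent subgroup, so by Fact \ref{Marseilles} $B_a$ is the unique Borel containing it and $N_G(A)^\o = B_a$; the usual genericity estimate then makes $\Union B_a^G$ generic, whence \cite[Theorem 1]{BC_semisimple} puts a maximal decent torus $S$ inside $B_a$, and the nontrivial Weyl group of $S$ (Lemma \ref{lem:Weyl_increase}) supplies an element normalizing $B_a$ that can lie neither inside $B_a$ (Fact \ref{selfnormsolv}) nor outside it (Fact \ref{unipotentselfnormalization}) --- a contradiction. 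So $C_{B_a}(A) < B_a$, and Zilber's field theorem applied to $A \rtimes (B_a / C_{B_a}(A))$ interprets a field $k$ of characteristic $p$ with $A = k^+$ and $H := B_a / C_{B_a}(A)$ an abelian definable subgroup of $k^\times$. To see the field is \emph{bad} I would rule out $H = k^\times$: equality would make $B_a$ transitive on $A \setminus \{1\}$, and confronting this transitivity with the structure of $B_a \cap B_a^g$ through Fact \ref{jaligot} yields a contradiction.

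The remaining bullets I would read off the characteristic-$p$ rank bound: since the prime-to-$p$ roots of unity of $k$ form $\bigoplus_{\ell \neq p} \Z(\ell^\infty)$, of \Prufer $\ell$-rank one for each $\ell$, any torus mapping faithfully into $k^\times$ has \Prufer $q$-rank at most one. The torsion of $H$, lifted by Fact \ref{f:nodeftorsion} and recognized as toral by Fact \ref{f:torality}, already yields the nontrivial decent torus of $B_a$ of the second bullet. For a $q$-torus $S \leq B_a$ the bound forces $S_1 := C_S(A)^\o$ to have corank at most one; if $\pr_q(S) = 2$ then $S_1 \neq 1$ centralizes $A$ and so lies in $C^\o(A) \leq B_a$ (the containing Borel being $B_a$ by Fact \ref{jaligot}), and extending $S_1$ to a maximal decent torus of $B_a$ and taking its centralizer produces a Carter subgroup of $B_a$ inside $B_a$, which, by conjugacy of Carter subgroups \cite{Frecon_Carter_conjugacy}, is a Carter subgroup of $G$, settling the third bullet. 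If instead $\pr_q(S) \geq 3$ then $\pr_q(G) \geq 3$, so Theorem \ref{HE} turns $C(T)$ into a nilpotent $p^\perp$ Borel subgroup; the Carter subgroup of $B_a$ produced as above, being conjugate to $C(T)$, would then be a $p^\perp$ Borel equal to $B_a$, contradicting $U_p(B_a) \neq 1$ and proving the fourth bullet.

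I expect the decisive obstacle to be the second paragraph: ruling out $A \leq Z(B_a)$ depends on the genericity machinery of \cite{BC_semisimple} dovetailing with the self-normalization facts, and confirming that the interpreted field is genuinely bad rather than a field of definition --- equivalently, that its multiplicative torus is realized \emph{torally} inside $B_a$ rather than merely as a section --- is where the argument is most delicate, and it is also what the toral-recognition step of the third paragraph (via Fact \ref{f:torality}) ultimately rests upon.
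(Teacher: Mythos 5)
Your first bullet follows the paper's skeleton (the dichotomy on whether the $B_a$-minimal subgroup $A$ is central in $B_a$, genericity of $\Union B_a^G$, Zilber's field theorem), and your finish of the fourth bullet --- a Carter subgroup of $G$ inside $B_a$ is conjugate to the nilpotent Borel $C(T)$ of Theorem \ref{HE}, hence is a $p^\perp$ Borel subgroup equal to $B_a$, against $U_p(B_a)\neq 1$ --- is correct and even slicker than the paper's ``argue as in the first paragraph.'' But three steps are genuinely broken. First, badness of the field: your plan to rule out $H = k^\times$ by ``confronting transitivity on $A\setminus\{1\}$ with Fact \ref{jaligot}'' is not an argument, and I do not see how to make it one. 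The paper's reason is one line, and it is the only place degenerate type enters: $p \neq 2$ (degenerate type has no $2$-unipotence), so $k^\times$ contains the involution $-1$; if $H = k^\times$ then $B_a$ has a definable section containing an involution, which lifts to a $2$-element of $B_a$ by Fact \ref{f:nodeftorsion}, impossible in degenerate type. Second, your second bullet invokes ``the torsion of $H$,'' but nothing you have said shows $H$ has any torsion: a definable subgroup of $k^\times$ can perfectly well be torsion-free in characteristic $0$ (that is exactly what the known bad fields look like), and the fact that this cannot happen in characteristic $p$ is precisely Wagner's theorem \cite[Corollary 9]{Wag01}, which the paper cites at this point and you never do.

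Third, and most seriously, your third bullet rests on the claim that extending $S_1$ to a maximal decent torus of $B_a$ and ``taking its centralizer produces a Carter subgroup of $B_a$,'' which then becomes a Carter subgroup of $G$ ``by conjugacy of Carter subgroups.'' Neither half is available. Centralizers of maximal decent tori in connected solvable groups of finite Morley rank need not be nilpotent --- proving that $C(T)$ is nilpotent for the maximal torus of $G$ is exactly the content of Corollary \ref{CT=Q}, and it required the whole Weylian-element machinery of \S\ref{sec:Cartan} --- and conjugacy of the Carter subgroups of $G$ does not upgrade a Carter subgroup of $B_a$ into one of $G$. A symptom of the error: your argument never actually uses that $S_1$ centralizes $A$ (you only deduce $S_1 \leq B_a$, which you already knew), i.e.\ it never uses the \Prufer rank $2$ hypothesis, so it would ``prove'' that $B_a$ contains a Carter subgroup of $G$ whenever it contains any nontrivial torus at all --- more than the corollary asserts. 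The paper's route is different: since the image of $T_q$ in $k^\times$ has \Prufer $q$-rank at most $1$, some $t \in T_q^\#$ centralizes all of $A$; then the centralizer \emph{in $G$} satisfies $C^\o(t) \leq B_a$ by Fact \ref{jaligot} (any Borel containing $C^\o(t)$ contains $A$), whence $C(T_q) \leq C^\o(t) \leq B_a$, and $C(T_q)$ contains a Cartan subgroup $C(S)$ of $G$, which is a Carter subgroup of $G$ by Corollary \ref{CT=Q}. It is this capture of a full $G$-centralizer inside $B_a$, together with the already-proved Corollary \ref{CT=Q}, that your sketch is missing. (A smaller point: in the genericity contradiction, the Weyl representative must be chosen to be a $p$-element --- possible since $p$ divides $|W|$ --- for Fact \ref{unipotentselfnormalization} to forbid it from lying outside $B_a$.)
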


\begin{proof}
We may assume that $a$ has order $p$.
Fix a $B_a$-minimal subgroup $A$ of $U_p(B_a)$.
If $A \leq Z(B_a)$, then, by Fact \ref{jaligot},
 $B_a \cap B_a^g \neq 1$ implies $g \in N(B_a)$.
Hence $B_a$ is disjoint from its distinct conjugates,
 and a standard rank computation shows that $B_a^G$ is generic in $G$. 
On the other hand, a generic element $g$ of $G$ contains a maximal decent torus inside
 its definable closure $d(g)$ by \cite[Theorem 1]{BC_semisimple}.
This implies that $B_a$ contains a maximal decent torus $T^g$.
But since $B_a$ is disjoint from its distinct conjugates,
 $B_a$ is the only conjugate of $B_a$ containing $T^g$.
In particular $a^g \in N(B_a)$.

As usual $a^g \in B_a$ by Fact \ref{unipotentselfnormalization},
 whence $a \in B_a \cap B_a^{g^{-1}}$. 
So, by Fact \ref{jaligot}, $g \in N(B_a)$.
As $T \leq d(g)$, we have $T \leq B_a$.
But this contradicts $a \in B_a$ for the usual reasons (Fact \ref{selfnormsolv}).

Hence $A$ is not central in $B_a$. Zilber's field theorem \cite[Theorem 9.1]{BN} now says that $B_a$ interprets a field of characteristic $p$, which is bad since the group has degenerate type.
On the other hand, $B_a$ has a nontrivial decent torus $T_0$ because the latter field has a locally finite model by \cite[Corollary 9]{Wag01}.

Assume that $B_a$ contains a $q$-torus $T_q$ of \Prufer $q$-rank $\geq 2$, and consider the action of $T_q$ on $A$. As $T_q$ can't embed into the multiplicative group of any field, there is $t \in T_q$ such that $A \leq C^\o(t)$. By Fact \ref{jaligot}, $C(T_q) \leq C^\o (t) \leq B_a$. But $C(T_q)$ contains a Cartan subgroup of $G$, which is a Carter subgroup of $G$ by Corollary \ref{CT=Q}.

Now assume that $T_q$ has \Prufer $q$-rank $\geq 3$; say $T_q \leq T^g$. Then by Theorem \ref{HE}, $B_a$ is a Carter subgroup of $G$, so $B_a = C(T^g)$, and we argue as in the first paragraph to get a contradiction.
\end{proof}

The next corollary will play a key role in \S\ref{sec:Cyclicity}.

\begin{corollary}\label{astrWxstrW}
If $a$ is Weylian $\tau'$-element for $T$
 and $1 \neq x \in N(T) \cap C(a)$,
then $x$ is a Weylian $\tau'$-element and $B_x = B_a$.
\end{corollary}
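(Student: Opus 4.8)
The plan is to prove the three assertions essentially in the order: $x$ is torsion (hence Weylian), then $B_x = B_a$, then $x$ is a $\tau'$-element, the last being the real content.

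First I would record that $x$ normalizes $B_a$ and is a torsion Weylian element. Since $x$ centralizes $a$ it normalizes $C^\o(a)$, and as $B_a$ is the \emph{unique} Borel subgroup containing $C^\o(a)$ (Lemma \ref{Ba}), conjugation by $x$ fixes $B_a$; thus $x \in N(B_a)$. For torsion, observe that $C(T) \cap C(a) = C_{C(T)}(a) = 1$ by Lemma \ref{CTW=1}, so the projection $N(T) \to W = N(T)/C(T)$ is injective on $N(T) \cap C(a)$; as $W$ is finite, $N(T) \cap C(a)$ is finite, whence $x$ is torsion of order exactly $|\bar x|$, and $\bar x \neq 1$ (else $x \in C_{C(T)}(a) = 1$). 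In particular $x$ is Weylian and $x \notin C(T)$.

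Second, I would show that the unique Borel over $C^\o(x)$ is $B_a$. For each prime $q \mid |a|$ the $q$-part $a_q$ lies in $U_q(B_a)$ (Lemma \ref{Ba}) and commutes with $x$, so $C_{U_q(B_a)}(x) \neq 1$ and hence is infinite by Lemma \ref{CUpBt_inf}, using $x \in N(B_a) = N(U_q(B_a))$ (Fact \ref{jaligot}). Its connected component $U' := C^\o_{U_q(B_a)}(x)$ is a nontrivial connected $q$-unipotent subgroup of $C^\o(x)$. If $B'$ is any Borel subgroup containing $C^\o(x)$, then $U'$ lies in the Fitting subgroup of each of $B_a$ and $B'$, so Fact \ref{jaligot} forces $B' = B_a$. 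Hence $C^\o(x) \leq B_a$, with $B_a$ the unique Borel over it, and $x \in B_a$ by Fact \ref{BCx}. Once $x$ is known to be $\tau'$, this reads $B_x = B_a$.

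The hard part is to show that $x$ is a $\tau'$-element. Suppose instead that some prime $p \in \tau$ divides $|x|$, and let $y := x_p$ be the corresponding $p$-part; the previous two paragraphs apply verbatim to $y$, giving $y$ Weylian, $\bar y \neq 1$, $y \in N(B_a)$, and $C^\o(y) \leq B_a$ with $B_a$ the unique Borel over $C^\o(y)$. My target is $T \leq B_a$, which is impossible: then $a \in B_a \cap N(T)$ with $a \notin C(T)$ would be a nontrivial Weyl element inside the connected solvable group $B_a$, contradicting Fact \ref{selfnormsolv}. To reach it I would produce an order-$p$ toral element centralized by $y$: as $y \in N(T)$ normalizes the nontrivial $p$-torus $T_p \leq T$, \cite[Corollary 6.20]{BN} yields $t \in C_{T_p}(y)$ of order $p$, with $t \in C(y) \leq N(B_a)$ and $T \leq C^\o(t)$; so it would suffice to prove $C^\o(t) \leq B_a$, and for this, exactly as above, to exhibit a nontrivial connected unipotent subgroup common to $C^\o(t)$ and $B_a$.

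The step I expect to be the main obstacle is precisely producing that common unipotent subgroup: $t$ acts on $U_q(B_a)$ \emph{coprimely} ($q \mid |a|$, $q \neq p$), and such an action can a priori be fixed-point-free, so $C_{U_q(B_a)}(t)$ need not be infinite. I would therefore replace the chase after a single $t$ by the torality dichotomy of Fact \ref{f:torality} applied to $y$: if $C^\o(y)$ carries a nontrivial $p$-unipotent subgroup, then Lemma \ref{a_not_piT} already makes $y$ a $\tau'$-element, contradicting $p \in \tau$; otherwise $y$ is toral, so a nontrivial $p$-torus $S$ sits inside $C^\o(y) \leq B_a$, and Corollary \ref{BW} bounds its Prüfer rank by $2$ while, in rank $2$, forcing a Carter subgroup — hence a maximal decent torus of $G$ — into $B_a$, against which the same self-normalization contradiction closes the argument. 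I expect the genuinely delicate configuration to be Prüfer $p$-rank $1$, where neither the rank bounds of Corollary \ref{BW} nor a single coprime fixed point are available and a separate analysis of the order-$p$ toral torsion (tracking $C_{T_p}(y)$ back into $B_a$) will be needed.
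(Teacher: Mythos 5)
The first two paragraphs of your proposal are correct and coincide with the paper's own proof: torsion-ness and $x \notin C(T)$ come from $C_{C(T)}(a) = 1$ (Lemma \ref{CTW=1}); then $x$ normalizes $B_a$ by uniqueness, Lemma \ref{CUpBt_inf} makes $C_{U_q(B_a)}(x)$ infinite, and Fact \ref{jaligot} makes $B_a$ the unique Borel subgroup over $C^\o(x)$. Your reduction of the $\tau'$-claim to the dichotomy of Fact \ref{f:torality} combined with Lemma \ref{a_not_piT} is also exactly the paper's move. The genuine gap is your endgame in the toral case. Your rank-$2$ closure fails: the third point of Corollary \ref{BW} places a Carter subgroup, hence a \emph{conjugate} maximal decent torus $T^h$, inside $B_a$, but ``the same self-normalization contradiction'' requires an element of $B_a$ that normalizes $T^h$ without centralizing it, and you have none --- $a$ and $y$ normalize $T$, not $T^h$, and Fact \ref{selfnormsolv} applied inside the connected solvable group $B_a$ only says that elements of $B_a$ normalizing $T^h$ must centralize it, which is a theorem, not an absurdity. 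Indeed, a Borel subgroup containing a Carter subgroup of $G$ is a perfectly legitimate configuration in this paper: it is what Corollary \ref{BW} asserts in \Prufer rank $2$, what happens when $C(T)$ is itself a Borel (Theorem \ref{HE}), and what \S\ref{sec:Counting_Borel} studies. Moreover you explicitly leave the \Prufer rank-$1$ configuration open, so the argument is incomplete even granting the rank-$2$ step.

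What closes the toral case in the paper is a pair of facts you never invoke, and they rule out \emph{all} the low-rank configurations at once rather than treating them separately. Since $\bar{y} \neq 1$ is a $p$-element of $W$ (of the same order as $y$, by your first paragraph or Lemma \ref{lem:constant_order}) and $U_p(C^\o(y)) = 1$ in the toral case, Fact \ref{f:Weyl_p} shows that $p$ cannot be the smallest prime divisor of $|W|$; as $|W|$ is odd in degenerate type, this gives $p \geq 5$. Torality then puts $y$ in a maximal decent torus $T^g$ with $T^g \leq C^\o(y) \leq B_a$, and Fact \ref{irrpol} --- the $p$-adic representation argument applied to the $p$-element $y \in N(T)\setminus C(T)$ acting on the $p$-torsion --- forces the \Prufer $p$-rank of the maximal decent tori to be at least $p-1 \geq 4$. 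This contradicts the last point of Corollary \ref{BW}, which forbids tori of \Prufer rank $\geq 3$ inside $B_a$. In short, the rank-$1$ and rank-$2$ cases that block you simply cannot occur, but seeing this requires Fact \ref{f:Weyl_p} and Fact \ref{irrpol}, not a refinement of the low-rank clauses of Corollary \ref{BW}.
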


\begin{proof}
We may assume that $a$ is a $p$-element as usual.

We first prove that $x$ is torsion and not inside $C(T)$.
Let $y = x^n$ be such that $y \in C(T)$.
Then $y \in C_{C(T)} (a) = 1$ by Lemma \ref{CTW=1}.
So it follows that $x$ is torsion and not inside $C(T)$.

We may now assume that $x$ is a $q$-element.
If $p = q$, then $x \in B_a$ by Fact \ref{BCx}, as desired.
So we may assume that $q \neq p$.
It follows from uniqueness of $B_a$ that $x$ normalizes $B_a$.
So Lemma \ref{CUpBt_inf} says that $x$ centralizes elements of $U_p(B_a)$.
Now, by Fact \ref{jaligot},
 $B_a$ is the only Borel subgroup containing $C^\o(x)$.

In view of Lemma \ref{a_not_piT},
 it suffices to show that $C^\o(x)$ 
 has a nontrivial $q$-unipotent subgroup. 
So suppose that $U_q( C^\o(x) ) = 1$.
Then $q$ is not the minimal prime divisor of $|W|$ by Fact \ref{f:Weyl_p},
 so $q \geq 5$ in particular.
Moreover $x$ is toral in $G$ by Fact \ref{f:torality},
 i.e.\ $x \in T^g$ for some $g \in G$.
So then $T^g \leq C^\o(x) \leq B_a$.

By Fact \ref{irrpol}, $T^g$ must have \Prufer $q$-rank at least $q-1 \geq 4$,
 contradicting Corollary \ref{BW}.
\end{proof}

\section{Isomorphic lifting}\label{sec:Cyclicity}

The main result of the paper is the following
 characterization of the Weyl group.

\begin{theorem}\label{thm:Weyl_cyclic}
Let $G$ be a minimal connected simple group of finite Morley rank,
 and let $T$ be a decent torus of $G$.
Then the Weyl group $W := N(T)/C(T)$ is cyclic,
 and has an isomorphic lifting to $G$;
 moreover no primes dividing $|W|$ appear in $T$ except possibly 2.
\end{theorem}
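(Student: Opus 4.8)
The plan is to treat the three assertions in the reverse order announced in the introduction, after an immediate reduction. If $W$ is trivial there is nothing to prove, so I assume $W \neq 1$. Using Lemma~\ref{lem:Weyl_increase} I would replace $T$ by a maximal decent torus $S \geq T$, so that $W_T$ embeds in the Weyl group $W_S = N(S)/C(S)$ of $G$. Cyclicity and the prime restriction then descend to $W_T$ for free: a subgroup of a cyclic group is cyclic, and the primes dividing $|W_T|$ divide $|W_S|$ and so cannot appear in $S \geq T$; the isomorphic lifting of $W_T$ is recovered by lifting a generator within $N_{N(T)}(S) \leq N(T)$, the subgroup realizing $W_T$ inside $W_S$. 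It then remains to treat a maximal decent torus, and here I would split on type: the odd type is Fact~\ref{DJ_odd_cyclic} and the even and mixed types are \cite{ABC_EvenType}, so the whole content lies in the degenerate case, where the machinery of \S\ref{sec:Cartan} applies and, since Sylow $2$-subgroups are trivial, $2 \in \tau'$ and the clause ``except possibly $2$'' is automatic.

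For the prime restriction I would prove the sharper statement that every element of $W$ of prime order $p$ lifts to a Weylian $\tau'$-element of order $p$, which forces $p \in \tau'$. Lifting $\bar w$ to a $p$-element $x \in N(T) \setminus C(T)$ by Fact~\ref{f:nodeftorsion}, Lemma~\ref{lem:constant_order} guarantees $|x| = p$. I then run the dichotomy behind Lemma~\ref{a_not_piT}: if $U_p(C^\circ(x)) \neq 1$ then $x$ is a $\tau'$-element and we are done, while otherwise $C^\circ(x)$ is of $p^\perp$ type, Fact~\ref{f:torality} makes $x$ toral, and $p \in \tau$. The crux is to contradict this last possibility. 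For $p = r$ it is exactly Corollary~\ref{r_not_piT} via Fact~\ref{f:Weyl_p}; for a general $p$ I would use the faithful action of $W$ on $T = \bigoplus_q T_q$ to produce a $p$-element acting nontrivially on the $p$-torus $T_p$, apply Fact~\ref{irrpol} to get $\pr_p(T) \geq p-1 \geq 2$, and play this against the torus-rank bounds: by Theorem~\ref{HE}(ii) a \Prufer rank $\geq 3$ would make $C(T)$ a Borel, and the resulting configuration recreates the forbidden coexistence of $x$ with a full torus inside one connected solvable group (Fact~\ref{selfnormsolv}), while Corollary~\ref{BW} bars rank $\geq 3$ tori from the relevant Borel $B_a$.

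Once every prime dividing $|W|$ lies in $\tau'$, Lemma~\ref{UpCT=1} makes $C(T)$ a $p^\perp$-group for each such $p$, and the lifting becomes structural. Any Weylian $\tau'$-element $a$ acts on the connected nilpotent group $C(T)$ with $C_{C(T)}(a) = 1$ by Lemma~\ref{CTW=1}, so the map $c \mapsto [c,a]$ is surjective on each abelian section; this fixed-point-free behaviour lets me adjust lifts by elements of $C(T)$ and obtain $C_{N(T)}(a) \cong C_W(\bar a)$, the injectivity being Lemma~\ref{CTW=1} once more. Combined with Fact~\ref{f:nodeftorsion} and Lemma~\ref{lem:constant_order} applied to prime-power parts, this shows that commuting elements of $W$ lift to commuting torsion elements of the same order, and that every such lift is Weylian $\tau'$ by Corollary~\ref{astrWxstrW}.

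The final and hardest step is cyclicity. I would exploit that $W$ acts faithfully on $T = \bigoplus_q T_q$ with all primes dividing $|W|$ coprime to $\tau$, so each order-$p$ element sits in $\prod_q \GL_{d_q}(\Z_q)$ with $p \neq q$. A noncyclic Sylow $p$-subgroup would contain a copy of $(\Z/p)^2$ acting on $T$; decomposing $T$ under it and invoking Fact~\ref{irrpol} on the nontrivial summands would force \Prufer ranks incompatible with Corollary~\ref{BW} and Theorem~\ref{HE}, so every Sylow subgroup of $W$ is cyclic. A finite group all of whose Sylow subgroups are cyclic is metacyclic, and the rigidity furnished by the centralizer isomorphism $C_{N(T)}(a) \cong C_W(\bar a)$ together with the same-order lifting should pin $W$ down to being cyclic. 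Then $W = \langle \bar w \rangle$, and lifting $\bar w$ to a torsion element $w$ of order $|W|$ yields $\langle w \rangle \cong W$, the desired isomorphic lifting. I expect the genuine difficulty to be concentrated in ruling out the toral case of the prime-restriction step and in the cyclicity argument, both of which hinge on squeezing Fact~\ref{irrpol} against the torus-rank bounds of Corollary~\ref{BW} and Theorem~\ref{HE}; by contrast the lifting is, once $C(T)$ is known to be $p^\perp$, essentially fixed-point-free cohomology.
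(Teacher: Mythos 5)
Your reductions (to a maximal decent torus via Lemma~\ref{lem:Weyl_increase}, then to degenerate type via Fact~\ref{DJ_odd_cyclic} and \cite{ABC_EvenType}) match the paper, and your fixed-point-free lifting idea (Lemma~\ref{CTW=1} used to adjust lifts inside $C(T)$) is a reasonable variant of the paper's Frattini/Hall-conjugacy arguments. But both hard steps have genuine gaps. For the prime restriction at a general prime $p$ dividing $|W|$, you propose, in the toral case, to ``produce a $p$-element acting nontrivially on the $p$-torus $T_p$'' and then invoke Fact~\ref{irrpol}. Nothing produces such an element: faithfulness of $W$ on $T$ only gives nontrivial action on \emph{some} primary component $T_q$, possibly with $q\neq p$, and Fact~\ref{irrpol} is a same-prime statement, silent about cross-prime actions. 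Worse, your appeal to ``Corollary~\ref{BW} bars rank $\geq 3$ tori from the relevant Borel $B_a$'' presupposes a Borel $B_a$, but $B_a$ is only defined (Lemma~\ref{Ba}) for $\tau'$-elements, which is exactly what is at stake. The paper escapes this circle by an induction over the primes $p_1<\dots<p_k$ of $|W|$: the toral case is excluded (Corollary~\ref{astrWxstrW}) only for elements $x$ that \emph{commute with an already constructed Weylian $\tau'$-element} $a$, because then $C^\o(x)\leq B_a$ (Lemma~\ref{CUpBt_inf} plus Fact~\ref{jaligot}) and the rank bound of Corollary~\ref{BW} applies to $T^g\leq C^\o(x)\leq B_a$; and the commuting partner in $W$ is supplied by Lemma~\ref{lem:metaFrobGreg}, i.e.\ by the Frobenius-complement structure of Fact~\ref{metacyclic} --- an input your proof never uses. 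Only the smallest prime comes ``for free'' (Corollary~\ref{r_not_piT}).

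The cyclicity step fails for the same reason, more visibly. By the time you argue cyclicity, every prime $p$ dividing $|W|$ lies in $\tau'$, so $T_p=1$ and $W$ acts on $\bigoplus_q T_q$ with all $q$ coprime to $p$; Fact~\ref{irrpol} then says nothing, and in fact $(\Z/p)^2$ acts faithfully on a $q$-torus of Pr\"ufer rank $2$ whenever $q\equiv 1 \pmod p$ (diagonally, via $\mu_p\times\mu_p\leq\GL_2(\Z_q)$), which Corollary~\ref{BW} and Theorem~\ref{HE} permit. So noncyclic Sylow subgroups of $W$ cannot be excluded this way. Moreover, even granting cyclic Sylows, a metacyclic group with all Sylows cyclic need not be cyclic (e.g.\ $\Z/7\rtimes\Z/3$, which is itself a Frobenius complement), so ``rigidity should pin $W$ down'' is not an argument; excluding exactly these groups is the whole difficulty. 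The paper's actual mechanism is different: it \emph{imports} metacyclicity and the Frobenius-complement property (Fact~\ref{metacyclic}), uses Lemma~\ref{lem:metaFrobGreg} to find commuting elements in $W$, and then inductively builds commuting Weylian $\tau'$-elements $a_1,\dots,a_k$ of orders $p_i^{n_i}$, all sharing one Borel $B_W=B_{a_i}$; their product $w$ commutes because the factors are unipotent elements of $B_W$ of coprime orders, and $w$ has order $|W|$ in $W$ by Lemma~\ref{lem:constant_order}, which simultaneously yields cyclicity, the isomorphic lifting $\gen{w}$, and the prime restriction. This construction is the missing heart of your proposal.
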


To prove this, we first observe that the decent torus $T$ may be taken maximal
 by Lemma \ref{lem:Weyl_increase}.

In fact, our proof needs only cover groups of {\em degenerate type}.
So let us first reduce to this case.
If $G$ has a nontrivial 2-unipotent subgroup, then
 $G$ is algebraic by \cite{ABC_EvenType}, and
 hence $G \cong \PSL_2$ where $|W| = 2$.
If $G$ has a non-trivial 2-torus, then $|W| = 1,2,3$
 by the comments following Lemma \ref{lem:Weyl_increase}
 and the second author's thesis work:

\begin{fact}[{\cite[see Th\'{e}or\`{e}me p.89 and p.91]{DeloroPhd}}]\label{DJ_odd_cyclic}
Let $G$ be a minimal connected simple group of finite Morley rank of odd type,
 and let $Q$ be a Carter subgroup of $G$ containing a Sylow\o 2-subgroup.
Then $W_Q := N(Q)/Q$ has order $1$, $2$, or $3$.
\end{fact}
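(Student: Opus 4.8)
The plan is to reduce the whole problem to the faithful action of $W_Q$ on the Sylow$^\circ$ $2$-subgroup $S$ of $Q$. In odd type $S$ is a $2$-torus $\Z(2^\infty)^n$, where $n$ is the \Prufer $2$-rank of $G$, and since a connected nilpotent group of finite Morley rank has central torsion, $S \leq Z(Q)$ and $S$ is characteristic in $Q$; hence $N(Q) \leq N(S)$ and $W_Q = N(Q)/Q$ acts on $S$. First I would show this action is \emph{faithful}. Its kernel is $N(Q) \cap C(S) = N_{C(S)}(Q)$; but $C(S)$ is connected by Fact \ref{CTconnected} and solvable by minimal connected simplicity, while $Q$ is a definable connected subgroup of finite index in its own normalizer, so Fact \ref{selfnormsolv} gives $N_{C(S)}(Q) = Q$. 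Thus $W_Q$ embeds into $\GL_n(\Z_2)$, the automorphism group of $S$; in particular $W_Q$ is finite, and (after the identification $W_Q \cong W$ of Lemma \ref{lem:Weyl_increase} and the remarks following it) this is the Weyl group we wish to understand.

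Next I would constrain the primes dividing $|W_Q|$ by a $2$-adic argument in the style of Fact \ref{irrpol}. For a prime $p \mid |W_Q|$, torsion lifting (Fact \ref{f:nodeftorsion}) yields a $p$-element $w \in N(Q)\setminus Q$ which, by faithfulness, induces an automorphism of $S$ of order exactly $p$. Viewing this automorphism in $\GL_n(\Q_2)$, its minimal polynomial divides $X^p-1=(X-1)\Phi_p(X)$ but is not $X-1$, hence is divisible by an irreducible factor of $\Phi_p$ over $\Q_2$; such a factor has degree equal to the order $d_p$ of $2$ modulo $p$, so $n \geq d_p$. As $d_3 = 2$, $d_7 = 3$ and $d_5 = 4$, a bound of the form $n \leq 2$ would leave $3$ as the only admissible odd prime; the same computation bounds the $2$-part, since an element of order $4$ already forces $n \geq 2$ (because $\Phi_4 = X^2+1$ stays irreducible over $\Q_2$) while one of order $8$ forces $n \geq 4$.

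The genuinely hard, odd-type-specific work is everything that the linear algebra above presupposes, namely bounding the \Prufer $2$-rank and then excluding the two residual orders $4$ and $6$. For the rank I would study centralizers of involutions: in a minimal connected simple group of odd type these are proper, hence connected and solvable, and a Bender-method and strong-embedding analysis of the resulting $2$-local structure is what forces $n \leq 2$. To eliminate the orders $4$ and $6$, I would lift the involution of $W_Q$ to an honest involution $\iota$ of $G$ -- its order is preserved by Lemma \ref{lem:constant_order} -- and show that an element of order $4$, or a subgroup of order $6$, in $W_Q$ would impose on $C(\iota)$ a configuration incompatible both with its solvability and with the rigidity of finite subgroups of $\GL_2(\Z_2)$ already in hand. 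I expect this last block, rather than the bookkeeping of the first two paragraphs, to absorb essentially all of the effort, as it does in the corresponding thesis analysis, and to be the decisive obstacle.
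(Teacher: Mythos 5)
Your first two paragraphs are sound and use exactly the toolkit of this paper: centrality of the maximal $2$-torus $S$ in the Carter subgroup $Q$, connectedness and solvability of $C(S)$ (Facts \ref{CTconnected} and \ref{selfnormsolv}) giving a faithful action of $W_Q$ on $S$, and the Tate-module computation in the style of Fact \ref{irrpol}. But note that the paper itself offers no proof of this statement --- it is imported wholesale from the second author's thesis --- and your third paragraph, which is where the theorem actually lives, is not a proof but a promissory note. Bounding the \Prufer\ $2$-rank is not a routine ``Bender-method and strong-embedding analysis of the $2$-local structure'': that bound, together with the ensuing elimination of residual configurations, \emph{is} the content of the cited thesis, including the identification theorem for $\PSL_2$ and the taming of the non-algebraic rank-one configurations. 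Your sketch does not indicate how strong embedding enters, what contradiction is reached, or why solvability of centralizers of involutions forces the rank bound; as written, the statement has been reduced to one at least as hard as itself.

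There is also a gap in what you call the bookkeeping. Even granting the rank bound $n \leq 2$, minimal-polynomial degrees constrain \emph{element orders} ($2$-elements of order at most $4$, odd-order elements of order $1$ or $3$), not the order or isomorphism type of $W_Q$: the dihedral group of order $8$, the symmetric group $S_3$, and the dihedral group of order $12$ all embed into $\GL_2(\Z) \leq \GL_2(\Z_2)$, so ``order $1$, $2$, or $3$'' does not follow from faithfulness plus $n \leq 2$, and your plan to ``exclude the two residual orders $4$ and $6$'' tacitly assumes $W_Q$ is cyclic, which nothing in your argument provides. Repairing this requires structural input on $W_Q$ itself --- for instance the Frobenius-complement structure of Fact \ref{metacyclic} combined with Fact \ref{Frobenius}, which forces cyclic Sylow subgroups --- in addition to the missing rank bound. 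So the proposal is a reasonable reduction framework, but both decisive ingredients (the \Prufer\ rank bound and the passage from element-order constraints to the group-order constraint) are absent.
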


\noindent
Moreover, if the Weyl group has order 3, then
 \cite[Corollaire 5.5.15]{DeloroPhd} says that
 $N(Q) = Q \rtimes \gen{\sigma}$ where $\sigma$ has order 3.
So by Fact \ref{f:Weyl_p}, $U_3(C^\o (\sigma)) \neq 1$,
 and Lemma \ref{a_not_piT} proves that $\sigma$ is a Weylian $\tau'$-element,
 which means that $G$ has no non-trivial $3$-torus.

So Theorem \ref{thm:Weyl_cyclic} holds in odd type.
As such, we may assume that the Sylow 2-subgroup is finite,
 and hence trivial by \cite{BBC}.

We also recall an earlier analysis, by the first author and Tuna \Altinel,
 of the Weyl group $W := N(T)/C(T)$ in a minimal connected simple group $G$.

\begin{fact}[{\cite[Proposition 5.1 \& Corollary 5.7]{AB_CT}}]
\label{metacyclic}
The Weyl group $W$ of $G$ is a metacyclic Frobenius complement
 in a finite Frobenius group.
\end{fact}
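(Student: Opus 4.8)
The plan is to realise $W$ as a Frobenius complement by producing a finite abelian group on which it acts \emph{fixed-point-freely}, and then to upgrade the bare Frobenius-complement structure to metacyclicity by controlling Sylow subgroups. Since $W = N(T)/C(T)$ acts faithfully on $T$ by conjugation (all lifts of a given $w \in W$ agree on $T$ because $C(T)$ centralises $T$), the natural kernel is a finite torsion subgroup of $T$. The one genuine obstruction to a fixed-point-free action is the prime $2$: an element of $W$ acting by inversion on a copy of $\Z(2^\infty)$ necessarily fixes the involution. So I would take as kernel a finite subgroup $K \le T$ of \emph{odd} order (a finite subgroup of the prime-to-$2$ part of $T$) and aim to show $C_K(w) = 1$ for every $1 \neq w \in W$; the semidirect product $K \rtimes W$ is then a finite Frobenius group with complement $W$.

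First I would prove the core fixed-point-free statement: no $1 \neq w \in W$ centralises a nontrivial element $s \in T$ of odd prime order $p$. Arguing by contradiction, set $H := C^\o(s)$. By simplicity of $G$ the subgroup $H$ is proper, by minimal simplicity it is connected solvable, and it contains the maximal decent torus $T$ (since $T \le C(T) \le C(s)$ and $T$ is connected). As connected solvable groups have trivial Weyl groups (Fact \ref{selfnormsolv}), any representative of $w$ lying \emph{inside} $H$ would already centralise $T$, i.e.\ be trivial in $W$. Hence the whole difficulty is to pin a representative of $w$ inside the connected component $C^\o(s)$ rather than merely inside $C(s)$; I would achieve this by first replacing the lift by a torsion lift (Fact \ref{f:nodeftorsion}) and then invoking torality and the covering-by-Borels machinery (Facts \ref{f:torality} and \ref{BCx}) to land it in the connected centraliser. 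This is exactly the point at which the prime $2$ escapes the argument, and it accounts for the clause ``except possibly $2$'' in the main theorem. When $T$ is a pure $2$-torus, so that no odd torsion is available, I would instead build the Frobenius kernel from the Carter subgroup $C(T)$, on which Weylian $\tau'$-elements already act fixed-point-freely by Lemma \ref{CTW=1}.

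With $W$ established as a finite Frobenius complement, I would invoke the classical structure theory: the Sylow $p$-subgroups of a Frobenius complement are cyclic for odd $p$, while the Sylow $2$-subgroup is cyclic or generalised quaternion. To reach metacyclicity it then suffices to exclude the generalised quaternion case (which would reflect an $\mathrm{SL}_2$-type section inside $W$); once every Sylow subgroup of $W$ is cyclic, $W$ is a $Z$-group and hence metacyclic by the H\"older--Burnside--Zassenhaus classification of finite groups all of whose Sylow subgroups are cyclic.

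I expect the main obstacle to be the delicate Step~2: securing a representative of $w$ inside the \emph{connected} centraliser $C^\o(s)$ while cleanly isolating the exceptional prime $2$, since a naive lift only lands in $C(s)$ and the component group $C(s)/C^\o(s)$ is precisely where inversions hide. A secondary difficulty is the exclusion of generalised quaternion Sylow $2$-subgroups needed for the metacyclic conclusion.
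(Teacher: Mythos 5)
First, a point of order: the paper does \emph{not} prove Fact~\ref{metacyclic} at all --- it is imported wholesale from \cite[Proposition 5.1 \& Corollary 5.7]{AB_CT} --- so there is no internal proof to compare against, and your attempt must be judged on its own terms. Your outline has the right overall shape (a fixed-point-free action of $W$ on a finite abelian kernel yields the Frobenius complement structure; cyclic Sylow subgroups then yield metacyclicity via the classification of $Z$-groups), but your core claim is too strong, and the step you yourself flag as ``the main obstacle'' is a genuine gap rather than a technicality. The elementwise statement --- no nontrivial $w \in W$ fixes an element $s \in T$ of odd prime order --- cannot be extracted from the facts you cite: Fact~\ref{BCx} places a torsion element inside Borel subgroups containing \emph{its own} connected centralizer and says nothing about landing a lift of $w$ inside $C^\o(s)$, while Fact~\ref{f:torality} carries a $\pi^\perp$-type hypothesis you cannot verify here. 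What general torus arguments actually give is weaker: $C_T(w)$ is \emph{finite}. Indeed one must replace the single element $s$ by a nontrivial decent subtorus $S \leq C_T(w)$ (available whenever $C_T(w)$ is infinite); then $C(S)$ \emph{is} connected by Fact~\ref{CTconnected} --- this is precisely why decent tori rather than torsion elements must be used --- and is proper, hence connected solvable, and contains $T$ together with a torsion lift of $w$, so triviality of Weyl groups of connected solvable groups (Fact~\ref{selfnormsolv}) forces $w = 1$. The gap between ``finite'' and ``trivial'' fixed points is real: an element of order $3$ in $\GL_2(\Z_3)$ with minimal polynomial $1+X+X^2$ fixes a point of order $3$ of the torus $(\Q_3/\Z_3)^2$ while fixing no subtorus. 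So the obstruction is not only the prime $2$: \emph{any} prime dividing $|W|$ and occurring in $T$ is problematic, and excluding the odd such primes is essentially the third clause of Theorem~\ref{thm:Weyl_cyclic} --- the very theorem that Fact~\ref{metacyclic} is used to prove --- so your strong claim is circular. Moreover, even granting the correct finite-fixed-point statement, producing a \emph{finite} Frobenius kernel still requires a representation-theoretic step (pass to the Tate module to obtain a fixed-point-free characteristic-zero representation, then perform a Zassenhaus-style reduction modulo a prime coprime to $|W|$), which your sketch omits entirely.

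Two further gaps. (i) Your fallback when $T$ is a pure $2$-torus invokes Lemma~\ref{CTW=1} and Weylian $\tau'$-elements, but a group with a nontrivial $2$-torus has \emph{odd} type, whereas all of that machinery (including Corollary~\ref{r_not_piT}, which rests on Fact~\ref{f:Weyl_p} and hence on $|W|$ having odd order) is developed only under the degenerate-type hypothesis of \S\ref{sec:Cartan}; besides, $C(T)$ is infinite, so it cannot serve as the kernel of a \emph{finite} Frobenius group. (ii) The exclusion of generalized quaternion Sylow $2$-subgroups is left completely open; in degenerate type it is automatic, since $G$ has no involutions and therefore $|W|$ is odd by torsion lifting (Fact~\ref{f:nodeftorsion}) --- this is exactly the paper's own remark citing \cite[7.6.2 p.~258]{Gor} --- but the Fact as stated covers all minimal connected simple groups, so a proof aiming at that generality owes an argument here.
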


A \emph{Frobenius complement} is the stabilizer of a point in a Frobenius group;
which is a transitive permutation group,
 such that no non-trivial element fixes more than one point
 and some non-trivial element fixes a point.
Such groups have a quite restrictive structure described by the following.

\begin{fact}[{\cite[10.3.1 p.~339]{Gor}}]\label{Frobenius}  
Let $W$ be a Frobenius complement in a finite Frobenius group.  Then
\begin{enumerate}
\item Any subgroup of $W$ of order $pq$, $p$ and $q$ primes, is cyclic.
\item Sylow $p$-subgroups of $W$ are either cyclic or
 possibly generalized quaternion groups if $p=2$.
\end{enumerate}
\end{fact}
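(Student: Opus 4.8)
The plan is to convert the permutation-theoretic hypothesis into a fixed-point-free linear representation of $W$ and then read off both conclusions from elementary character theory. First I would invoke the classical structure of Frobenius groups: by Frobenius's normal-complement theorem the ambient group splits as $F = K \rtimes W$, where the kernel $K$ is normal, $\gcd(|K|,|W|) = 1$, and $W$ acts on $K$ with $C_K(w) = 1$ for every $w \in W^\#$. By Thompson's theorem $K$ is nilpotent, hence the direct product of its Sylow subgroups, each characteristic and so $W$-invariant. Fixing a prime $\ell \mid |K|$ (thus $\ell \nmid |W|$) and the corresponding $W$-invariant Sylow $\ell$-subgroup $P \neq 1$, the Frattini quotient $V := P/\Phi(P)$ is a nonzero $\mathbb{F}_\ell$-vector space carrying a $W$-action. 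Since the action is coprime, fixed points lift through $\Phi(P)$, so $W$ still acts fixed-point-freely on $V$: no $w \in W^\#$ has a nonzero fixed vector, i.e.\ (after extending scalars to $\overline{\mathbb{F}}_\ell$) no $w \in W^\#$ has eigenvalue $1$. This single module $V$ over $\overline{\mathbb{F}}_\ell$ will drive both parts, and because $\ell \nmid |W|$ the algebra $\overline{\mathbb{F}}_\ell[W]$ is semisimple, so diagonalisation and character theory behave exactly as in characteristic zero.

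The central lemma I would prove is that no subgroup $E \cong \Z_p \times \Z_p$ can act fixed-point-freely. Restricting $V$ to such an $E$ and diagonalising gives $V = \bigoplus_\chi V_\chi$ indexed by characters $\chi\colon E \to \overline{\mathbb{F}}_\ell^{\times}$. The image of any such $\chi$ is cyclic whereas $E$ is not, so $\ker\chi \neq 1$ for every $\chi$; choosing any $\chi$ with $V_\chi \neq 0$ and any $e \in \ker\chi \setminus \{1\}$ exhibits a nonzero fixed vector for $e$, a contradiction. Hence $W$ contains no copy of $\Z_p \times \Z_p$, and in particular no Sylow $p$-subgroup $P_0$ of $W$ does. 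Conclusion (2) is then precisely the classical theorem that a finite $p$-group containing no subgroup isomorphic to $\Z_p \times \Z_p$ (equivalently, having a unique subgroup of order $p$) is cyclic, with generalized quaternion groups as the sole exception when $p = 2$.

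For conclusion (1) I would rule out a non-abelian subgroup $H = \langle y\rangle \rtimes \langle x\rangle$ of order $pq$ with $p \neq q$, where $y$ has order $q$ and is normal and $x$ has order $p$ acting nontrivially. Restricting $V$ to the cyclic normal part $\langle y\rangle$ and diagonalising, the trivial character cannot occur (else $y$ fixes a vector), and conjugation by $x$ permutes the occurring nontrivial characters in free orbits of size $p$, because an automorphism of $\Z_q$ of order $p$ fixes only the trivial character. Picking an occurring orbit $\{\chi, x\cdot\chi, \dots, x^{p-1}\cdot\chi\}$ and a nonzero $v \in V_\chi$, the vector $u := v + xv + \dots + x^{p-1}v$ lies in the sum of $p$ distinct eigenspaces, hence $u \neq 0$; and since $x^p = 1$ one computes $xu = u$, giving $x$ a nonzero fixed vector — the desired contradiction. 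Thus every subgroup of order $pq$ with $p \neq q$ is abelian and therefore cyclic, while a subgroup of order $p^2$ is abelian and, by the lemma of the previous paragraph, cannot be $\Z_p \times \Z_p$, so is again cyclic. This gives (1).

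The genuinely heavy inputs here are external and classical: Frobenius's normal-complement theorem together with Thompson's nilpotency of the kernel, which manufacture the fixed-point-free module, and the structure theorem for $p$-groups with a unique subgroup of order $p$, which converts the absence of $\Z_p \times \Z_p$ into the cyclic-or-quaternion dichotomy. Granting these, the core computations above are short. The one point demanding care — and the step I expect to be the main obstacle to get cleanly right — is the coprimeness bookkeeping: ensuring the fixed-point-free property truly survives passage to $V = P/\Phi(P)$, and checking that working over $\overline{\mathbb{F}}_\ell$ with $\ell \nmid |W|$ legitimately mimics ordinary character theory, so that the eigenvalues of each $w$ are genuine $|w|$-th roots of unity and the phrase ``$1$ is an eigenvalue'' carries its usual meaning.
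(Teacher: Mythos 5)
Your proposal is correct, but note that the paper does not prove this statement at all: it is quoted as a known fact, with the proof delegated to the cited reference \cite[10.3.1]{Gor}. What you have written is essentially a faithful reconstruction of that classical argument — Frobenius's theorem to split $F = K \rtimes W$ with $W$ acting fixed-point-freely and coprimely on the kernel, Thompson's theorem to get a $W$-invariant Sylow subgroup of $K$, passage to the Frattini quotient to obtain a fixed-point-free module over $\overline{\mathbb{F}}_\ell$ with $\ell \nmid |W|$, the eigenspace argument excluding $\mathbb{Z}/p \times \mathbb{Z}/p$, the orbit-summing trick excluding nonabelian subgroups of order $pq$, and finally the classification of $p$-groups with a unique subgroup of order $p$. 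All the individual steps check out, including the two delicate points you flag yourself: fixed points do lift through $\Phi(P)$ by the standard coprime-action lemma (applicable since $\Phi(P)$ is a $p$-group, hence solvable), and fixed-point-freeness survives extension of scalars since $\det(w-1)$ is computed over the prime field. So there is no gap; the only remark worth making is that your invocation of Thompson's nilpotency theorem, while legitimate (and indeed how Gorenstein's Chapter 10 is organized), is a heavy input used only to manufacture a $W$-invariant Sylow subgroup of $K$.
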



\noindent Metacyclicity follows for any Frobenius complement  
 inside a degenerate type group  \cite[7.6.2 p.~258]{Gor}.

For us, the important fact about these groups is the following.

\begin{lemma}\label{lem:metaFrobGreg}
Let $X$ be a finite group with cyclic Sylow subgroups
 such that any subgroup of order $pq$ with $p$, $q$ primes, is cyclic.
Then any two elements of prime order commute.
\end{lemma}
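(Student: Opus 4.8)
The plan is to produce a \emph{single abelian subgroup of $X$ that contains every element of prime order}; once this is achieved the conclusion is immediate, since any two elements of an abelian group commute. The natural candidate is the centralizer of the cyclic ``core'' of $X$. First I would invoke the classical structure theory of groups all of whose Sylow subgroups are cyclic (see e.g.\ \cite{Gor}): such a group is metacyclic, and in fact decomposes as $X = A \rtimes K$ where $A = X'$ is cyclic of order $m$, $K$ is cyclic of order $n$, and $\gcd(m,n) = 1$. Write $A = \langle a \rangle$. Every prime $p$ dividing $|X| = mn$ divides exactly one of $m$ and $n$, which sorts the elements of prime order into two types according to whether their order divides $m$ or $n$.

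For a prime $p \mid m$, the Sylow $p$-subgroup $A_p$ is characteristic in the cyclic normal subgroup $A$, hence normal in $X$, and its order equals $|X|_p$; thus $A_p$ is the unique Sylow $p$-subgroup of $X$, so every element of order $p$ lies in $A$ and therefore centralizes $a$. The crux is the second type: I must show that an element $v$ of order $q$ with $q \mid n$ \emph{also} centralizes $A$. Such a $v$ induces an automorphism of the cyclic group $A$ whose order divides $q$. For each prime $p \mid m$ the subgroup $\langle a^{m/p} \rangle$ is normal in $X$ and meets $\langle v \rangle$ trivially, so $\langle a^{m/p}, v \rangle$ has order $pq$; by hypothesis it is cyclic, hence abelian, whence $v$ fixes $a^{m/p}$. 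Consequently the automorphism induced by $v$ fixes every subgroup of prime order of $A$, so under the identification $\mathrm{Aut}(A) \cong \prod_{p \mid m}(\Z/p^{e_p}\Z)^{\times}$ it lies in a subgroup whose order involves only primes dividing $m$. Since that order also divides $q$ and $\gcd(m,q) = 1$, the automorphism is trivial and $v \in C_X(a)$. This coprimality step is where the real content of the lemma sits, and I expect it to be the main obstacle.

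It now follows that every element of prime order of $X$ lies in $C := C_X(a)$, and it remains only to check that $C$ is abelian. Since $A = \langle a \rangle$ is abelian we have $A \leq Z(C)$; moreover $C/A$ embeds into $X/A \cong K$, which is cyclic, so $C/A$ is cyclic. Hence $C/Z(C)$, being a quotient of the cyclic group $C/A$, is itself cyclic, which forces $C$ to be abelian. As all elements of prime order of $X$ lie in the abelian subgroup $C$, any two of them commute, as desired.
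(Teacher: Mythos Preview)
Your proof is correct, and it takes a genuinely different route from the paper's. You invoke the full H\"older--Burnside--Zassenhaus structure theorem for Z-groups to get the explicit splitting $X = A \rtimes K$ with $\gcd(|A|,|K|)=1$, and then argue \emph{directly} that every element of prime order lies in the abelian subgroup $C_X(a)$; no induction is needed. The key computation---that an automorphism of $A$ fixing each $\Omega_1(A_p)$ lies in the $m$-part of $\mathrm{Aut}(A)$, hence is trivial when its order is a prime not dividing $m$---is clean and does the work in one stroke.

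The paper instead proceeds by induction on $|X|$: it uses only solvability (from cyclic Sylow $2$-subgroups) and the Fitting subgroup. One finds a prime $p$ for which the Sylow $p$-subgroup $P$ of $F(X)$ is non-central, observes that the induced action on $P\cong\Z/p^n\Z$ is $p'$ (since a cyclic Sylow $p$-subgroup centralizes $P$) and hence nontrivial on $\Omega_1(P)$, and picks $a\in\Omega_1(P)\setminus Z(X)$. Then $\langle a\rangle\trianglelefteq X$, so $\langle a,x\rangle$ has order $p\cdot|x|$ and is cyclic by hypothesis for any $x$ of prime order; thus all such $x$ lie in $C(a)<X$, and one inducts. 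Your approach buys a self-contained, induction-free argument at the cost of citing a slightly sharper structural fact; the paper's approach is lighter on prerequisites but needs the inductive step. Both are perfectly good here.
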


\noindent In particular any $x,y \in X$ have nontrivial powers which commute.

\begin{proof}
Since the Sylow 2-subgroups of $X$ are cyclic,
 we know that $X$ is solvable by \cite[7.6.2 p.~258]{Gor}.
Also $F(X)$ is abelian because its Sylow 2-subgroups are cyclic.
Now if $F(X) \leq Z(X)$ then $X = F(X)$ is abelian, and we are done.

So assume $F(X) \not \leq Z(X)$.
Choose $p$ prime such that
 the Sylow $p$-subgroup $P$ of $F(X)$ is not central in $X$.
Then $X$ induces a nontrivial $p^\perp$-action on $P \simeq \Z/p^n\Z$, by hypothesis.
It follows that $X$ induces a nontrivial action on $\Omega_1(P)$.
So let $a \in \Omega_1(P) \setminus Z(X)$. Then $C(a) < X$.

Now $\gen{a} \vartriangleleft X$ so for any element $x \in X$ of prime order,
 $\gen{a, x}$ is cyclic by hypothesis.
Thus, if $x, y \in G$ have prime order, then $x, y \in C(a)$, and we conclude by induction.
\end{proof}

\smallskip

With the above reductions \& lemma at hand, we turn to proving Theorem \ref{thm:Weyl_cyclic},
 by considering the following situation.

\begin{hypothesis}
Our minimal connected simple group $G$ of degenerate type
 has a nontrivial maximal decent torus $T$ with
 a Weyl group $W := N(T)/C(T)$ that is a metacyclic Frobenius complement.
\end{hypothesis}

Again $\tau'$ is the set of primes for which $G$,
 or equivalently $T$, has no $p$-torus.
Let $|W| = \Pi_{i=1}^k p_i^{n_i}$ with $p_1 < \dots < p_k$.
We shall construct a sequence $(a_i)_{i = 1 \dots k}$
 of commuting Weylian $\tau'$-elements such that
 $w := \Pi_{i=1}^k a_i$ generates $W$ modulo $C(T)$.
To restate this, there are $a_i$ for $i = 1 \dots k$ such that
\renewcommand{\theenumi}{(\roman{enumi})}
\begin{enumerate}
\item  each $a_i \in N(T) \setminus C(T)$ has order $p_i^{n_i}$,
\item  all the $a_i$'s commute, and
\item  $G$ has no $p_i$-torus.
\end{enumerate}
A priori, it doesn't matter in what order these $a_i$ appear,
 but $p_1$ is for free, thanks to Fact \ref{f:Weyl_p} via Corollary \ref{r_not_piT}.

Let us start.
By Fact \ref{Frobenius} (ii), $W$ contains an element of order $p_1^{n_1}$.
We lift this element into a $p_1$-element $a_1$ of $N(T)$.
Lemma \ref{lem:constant_order} says that $a_1$ has the adequate order,
 and Corollary \ref{r_not_piT} explains why $p_1 \in \tau'$.
So $a_1$ is a Weylian $\tau'$-element.
Set $B_W := B_{a_1}$.

\smallskip

We now perform the induction.
Assume that $a_1, \dots, a_i$ have been constructed as claimed.
Let $b_i = a_1 \dots a_i$.
So then $b_i$ is a Weylian $\tau'$-element.
By Corollary \ref{astrWxstrW},
 $B_{a_1} = \dots = B_{a_i} = B_{b_i}$.
Also for convenience let $\pi_i = \{p_1, \dots, p_i\}$,
 and $\beta_i$ denote the image of $b_i$ in $W$.

\begin{claim}\label{claima'}
There is a $p_{i+1}$-element $a'$ of $N(T) \setminus C(T)$
 that commutes with a power $b'$ of $b_i$.
\end{claim}

\begin{proof}
By Lemma \ref{lem:metaFrobGreg}, there is a $p_{i+1}$-element $\alpha'$ of $W$
 commuting with a non-trivial power $\beta'$ of $\beta_i$.
Say $\beta' = \beta_i^k$, and let $b' = b_i^k \neq 1$.
Let $a_0$ be a lifting of $\alpha'$ into
 a $p_{i+1}$-element of $N(T) \setminus C(T)$.
So far $[a_0, b']$ is in $C(T)$ but need not equal the identity.
Therefore we shall adjust $a_0$ to get a suitable $a'$.

By Lemma \ref{UpCT=1}, $C(T)$ is $\pi_i^\perp$.
It follows that $\gen{b'}$ is a Hall $\pi_i$-subgroup of
 $K = C(T) \cdot \gen {b'} = C(T) \rtimes \gen {b'}$.
As $K$ has $\pi_i^\perp$-type, it conjugates its Sylow $\pi_i$-subgroups
 by \cite[Theorem 4]{BC_semisimple};
 as a straightforward consequence, $K$ conjugates its Hall $\pi_i$-subgroups.

Now $a'$ normalizes $K$. Let $H = K \cdot \gen{a_0}$.
A Frattini argument says that $H = K \cdot N_H (\gen{b'})$,
 so there must be a $p_{i+1}$-element $a'$ in $N_H (\gen{b'})$.
It follows that $a'$ does not lie inside $C(T)$.
Since $p_{i+1} > \max \pi_i$, $|b'|$ is a $\pi_i$-number,
 and $\gen{b'}$ has no automorphisms of order $p_{i+1}$.
Therefore $a'$ must centralize $b'$.
\end{proof}

\begin{claim}
There is a Weylian $\tau'$-element $a_{i+1}$ of order $p_{i+1}^{n_{i+1}}$ such that $B_{a_{i+1}} = B_W$.
\end{claim}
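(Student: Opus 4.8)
The claim to prove is that there is a Weylian $\tau'$-element $a_{i+1}$ of order $p_{i+1}^{n_{i+1}}$ with $B_{a_{i+1}} = B_W$. By Claim \ref{claima'} we already have in hand a $p_{i+1}$-element $a'$ of $N(T) \setminus C(T)$ commuting with a nontrivial power $b'$ of $b_i$. The plan is to feed this commuting pair into Corollary \ref{astrWxstrW} to transfer the $\tau'$ and Borel information from $b_i$ to $a'$, then upgrade $a'$ to an element of the full order $p_{i+1}^{n_{i+1}}$.

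\smallskip

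\textbf{Transferring the $\tau'$ property and the Borel.}
First I would observe that $b'$, being a nontrivial power of the Weylian $\tau'$-element $b_i$, is itself a Weylian $\tau'$-element with $B_{b'} = B_{b_i} = B_W$; this uses that the $\tau'$ property is closed under taking powers (as noted just before Lemma \ref{Ba}) together with the uniqueness of the Borel containing $C\oo$ of a $\tau'$-element. Now $a'$ is a nontrivial element of $N(T) \cap C(b')$, so applying Corollary \ref{astrWxstrW} with the roles $a := b'$ and $x := a'$ yields immediately that $a'$ is a Weylian $\tau'$-element and that $B_{a'} = B_{b'} = B_W$. In particular $p_{i+1} \in \tau'$, which secures conclusion (iii) for this prime.

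\smallskip

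\textbf{Upgrading to the full order.}
At this point $a'$ has order $p_{i+1}$ (or at least some $p_{i+1}$-power) but we need an element of order exactly $p_{i+1}^{n_{i+1}}$ so that its image generates the full Sylow $p_{i+1}$-subgroup of $W$. By Fact \ref{Frobenius}(ii) the Sylow $p_{i+1}$-subgroup of $W$ is cyclic of order $p_{i+1}^{n_{i+1}}$, so $W$ contains an element $\bar a_{i+1}$ of that order whose powers include the image of $a'$. I would lift $\bar a_{i+1}$ to a $p_{i+1}$-element $a_{i+1} \in N(T)$ using the torsion lifting of Fact \ref{f:nodeftorsion}; Lemma \ref{lem:constant_order} then guarantees that $a_{i+1}$ has order exactly $p_{i+1}^{n_{i+1}}$ and lies in $N(T) \setminus C(T)$. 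Since $p_{i+1} \in \tau'$ is now established, $a_{i+1}$ is a Weylian $\tau'$-element, and because an appropriate power of $a_{i+1}$ lies in $C(b')$ one again invokes Corollary \ref{astrWxstrW} to conclude $B_{a_{i+1}} = B_W$.

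\smallskip

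\textbf{Expected obstacle.}
The delicate point is arranging that the \emph{full-order} lift $a_{i+1}$, rather than just the order-$p_{i+1}$ element $a'$, still carries the Borel identification $B_{a_{i+1}} = B_W$; one must ensure $a_{i+1}$ (or a suitable power) genuinely commutes with some nontrivial power of $b_i$ before Corollary \ref{astrWxstrW} can be reapplied. The clean way around this is to run the upgrade \emph{inside} the cyclic Sylow structure: since the Sylow $p_{i+1}$-subgroup of $W$ is cyclic, $a'$ is already a power of the desired generator's image, so one can choose the lift $a_{i+1}$ to have $a'$ as a power, whence $a_{i+1}$ normalizes $B_{a'} = B_W$ and $B_{a_{i+1}} = B_W$ follows without re-establishing commutation from scratch.
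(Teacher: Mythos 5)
Your first half---transferring the Weylian $\tau'$ property and the Borel subgroup from $b'$ to $a'$ via Corollary \ref{astrWxstrW}---is exactly what the paper does, and it is correct. The gap is in the upgrade step. You assert that, because the Sylow $p_{i+1}$-subgroup of $W$ is cyclic and the image of $a'$ is a power of the generator $\alpha_{i+1}$, ``one can choose the lift $a_{i+1}$ to have $a'$ as a power'' (equivalently, that an appropriate power of $a_{i+1}$ lies in $C(b')$). This does not follow. What cyclicity gives you is only that, for \emph{any} $p_{i+1}$-element lift $a_{i+1}$ of $\alpha_{i+1}$, the relevant power of $a_{i+1}$ is congruent to $a'$ \emph{modulo $C(T)$}: it equals $a'c$ for some $c \in C(T)$, and there is no a priori reason that $c = 1$, or even that $a'c$ centralizes $b'$. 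This is precisely the difficulty already confronted in the proof of Claim \ref{claima'} (``So far $[a_0, b']$ is in $C(T)$ but need not equal the identity''), and it cannot be waved away: without it, neither the reapplication of Corollary \ref{astrWxstrW} nor the conclusion $B_{a_{i+1}} = B_W$ is available.

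The paper closes exactly this gap with a Frattini-type adjustment, which is the real content of the proof. One takes a naive lift $a''$ of $\alpha_{i+1}$ (of order $p_{i+1}^{n_{i+1}}$ by Lemma \ref{lem:constant_order}), forms $K = C(T) \rtimes \gen{a'}$ and $H = K \cdot \gen{a''}$, and observes that $\gen{a'}$ is a Hall $p_{i+1}$-subgroup of $K$ because $C(T)$ is $p_{i+1}^\perp$ by Lemma \ref{UpCT=1}. Conjugacy of these Hall subgroups---a genuine theorem about groups of finite Morley rank of $p^\perp$ type, namely \cite[Theorem 4]{BC_semisimple}, not finite group theory inside $W$---combined with a Frattini argument then produces a $p_{i+1}$-element $a_{i+1} \in N_{N(T)}(\gen{a'})$ lifting $\alpha_{i+1}$. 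Normalizing $\gen{a'}$ is what yields $B_{a_{i+1}} = B_{a'} = B_W$; in fact, since $C(T)$ is $p_{i+1}^\perp$, the group $\gen{a', a_{i+1}}$ meets $C(T)$ trivially and is therefore cyclic generated by $a_{i+1}$, so $a'$ \emph{is} a power of $a_{i+1}$---but this is a consequence of the conjugation argument, not something one may posit when choosing the lift. Your proposal needs this adjustment step spelled out; as written, the decisive assertion is unsupported.
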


\begin{proof}
Let $a'$ and $b'$ be as in Claim \ref{claima'}. Notice that $b'$, being a power of $b_i$, is a Weylian $\tau'$-element and $B_{b'} = B_W$. By Corollary \ref{astrWxstrW}, $a'$ is a Weylian $\tau'$-element and $B_{a'} = B_W$.

By Fact \ref{Frobenius} (2), Sylow $p_{i+1}$-subgroups of $W$ are cyclic. Let $\alpha_{i+1}$ be any element of $W$ of order $p_{i+1}^{n_{i+1}}$ generating a Sylow $p_{i+1}$-subgroup containing the image of $a'$. Let $a''$ be a lifting of $\alpha_{i+1}$ into a $p_{i+1}$-element of $N(T) \setminus C(T)$. By Lemma \ref{lem:constant_order}, $a''$ has order $p_{i+1}^{n_{i+1}}$. Also, notice that $[a', a''] \in C(T)$; we'll use the same argument as in Claim \ref{claima'}.

Let $K = C(T) \rtimes \gen{a'}$ and $H = K \cdot \gen{a''} \leq N(K)$. Since $C(T)$ is $p_{i+1}^\perp$, $\gen{a'}$ is a Hall $p_{i+1}$-subgroup of $K$. There must therefore be a $p_{i+1}$-element of $N_{N(T)} (\gen{a'})$ lifting $\alpha_{i+1}$.
Let $a_{i+1}$ be this element; it has order $p_{i+1}^{n_{i+1}}$, it is a Weylian $\tau'$-element; and above all, $a_{i+1}$ normalizes $\gen{a'}$, whence $B_{a_{i+1}} = B_{a'} = B_W$.
\end{proof}

Now $a_{i+1}$ commutes with $b_i$ because $B_{a_{i+1}} = B_W = B_{b_i}$ and
 $a_{i+1}$ and $b_i$ are unipotent elements of $B_W$ of coprime order.
So the induction is completed, and eventually we set $w := b_k$.
Then $w$ has order $|W|$ both in $G$ and in $W$;
 in particular $W \simeq \gen{w}$ is cyclic.
Besides, $w$ is a Weylian $\tau'$-element and $B_w = B_W$.
By Corollary \ref{UpCT=1}, $C(T)$ is $\pi_k^\perp$,
 so no prime number occurring in $T$ can divide $|W|$.

This concludes the proof of Theorem \ref{thm:Weyl_cyclic}.  \qed

\medskip

Connectedness of the Sylow $p$-subgroups is an immediate consequence.

%

\begin{corollary}
Let $G$ be a minimal connected simple group and $p \neq 2$ a prime.
Then the maximal $p$-subgroups of $G$ are connected.
\end{corollary}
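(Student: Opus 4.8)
The plan is to show, for an arbitrary maximal $p$-subgroup $S$ of $G$ with $p\neq 2$, that the finite quotient $S/S^\circ$ is trivial. I would split the argument according to whether the connected component $S^\circ$ carries a nontrivial $p$-torus or is purely $p$-unipotent, and in each case realize $S/S^\circ$ as a $p$-subgroup of a finite group whose order is prime to $p$: the Weyl group $W$ in the toral case, and a quotient $N_G(B)/B$ of a Borel normalizer in the unipotent case. This is where Theorem \ref{thm:Weyl_cyclic} enters, through the clause that no odd prime dividing $|W|$ occurs in $T$.

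I would first treat the case where the maximal $p$-torus $T_p$ of $S^\circ$ is nontrivial. Since $T_p$ is the unique maximal $p$-torus of the connected nilpotent group $S^\circ$, it is characteristic in $S^\circ\trianglelefteq S$, so $S\leq N_G(T_p)$; moreover $S^\circ\leq C_G(T_p)$ because $S^\circ$ is the central product of $T_p$ and its $p$-unipotent part. Hence $S/(S\cap C_G(T_p))$ is a $p$-group embedding into $N_G(T_p)/C_G(T_p)$, which by Lemma \ref{lem:Weyl_increase} is isomorphic to a subgroup of $W$. As the odd prime $p$ occurs in $T_p$, hence in any maximal decent torus $T\geq T_p$, Theorem \ref{thm:Weyl_cyclic} forbids $p$ from dividing $|W|$; therefore $S/(S\cap C_G(T_p))$ is trivial and $S\leq C_G(T_p)$. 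Since $T_p\neq 1$, the subgroup $C_G(T_p)$ is proper, connected by Fact \ref{CTconnected}, and solvable by minimal connected simplicity, and its maximal $p$-subgroups are connected for $p$ odd by the structure of $p$-subgroups in connected solvable groups \cite{BN}. As $S$ is then a maximal $p$-subgroup of $C_G(T_p)$, it is connected.

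The remaining case is that $S^\circ$ has no $p$-torus, so $S^\circ=U_p(S^\circ)$ is $p$-unipotent. If $S^\circ=1$, then $G$ has no $p$-unipotent subgroup (maximal $p$-subgroups being conjugate), and as $G$ also has no $p$-torus here, Fact \ref{f:torality} shows that no nontrivial $p$-element exists, i.e.\ $S=1$. So assume $S^\circ\neq 1$. Then $S^\circ$ is a maximal $p$-unipotent subgroup of $G$, hence equals $U_p(B)$ for a Borel subgroup $B$, and $N_G(S^\circ)=N_G(B)$ by Fact \ref{jaligot}; thus $S\leq N_G(B)$. Since $G$ has no $p$-torus, the connected solvable group $B$ has $U_p(B)$ as its unique maximal $p$-subgroup \cite{BN}, so every $p$-subgroup of $B$ lies in $U_p(B)$, whence $S\cap B=U_p(B)=S^\circ$. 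Therefore $S/S^\circ$ embeds into $N_G(B)/B$, which has order prime to $p$ by Fact \ref{unipotentselfnormalization}, and we conclude $S=S^\circ$.

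The step I expect to be the main obstacle is this last, unipotent, case: unlike the toral case it is not governed directly by the Weyl group of a decent torus, so one must instead pin $S^\circ$ down as the unipotent radical $U_p(B)$ of a \emph{single} Borel before Fact \ref{unipotentselfnormalization} can be applied. The delicate points are the identification $N_G(S^\circ)=N_G(B)$ and the equality $S\cap B=S^\circ$, both of which rest on the normality and uniqueness properties of maximal $p$-unipotent subgroups encoded in Fact \ref{jaligot}, together with the connectedness of odd Sylow subgroups of connected solvable groups.
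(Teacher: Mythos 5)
Your two main cases run along the same lines as the paper's own proof (split according to whether $S^\circ$ has a nontrivial $p$-torus or is $p$-unipotent; use Theorem \ref{thm:Weyl_cyclic} via Lemma \ref{lem:Weyl_increase} in the toral case, and the unique Borel subgroup containing the unipotent part together with Fact \ref{unipotentselfnormalization} in the unipotent case), and both of those cases are essentially sound --- indeed your Case A spells out exactly the details that the paper compresses into ``the result follows by Theorem \ref{thm:Weyl_cyclic}.'' The genuine gap is the residual case $S^\circ = 1$, and more generally every step justified by ``maximal $p$-subgroups being conjugate.'' Conjugacy of maximal $p$-subgroups of a group of finite Morley rank is known only for $p=2$ (Borovik--Poizat); for odd $p$ it is a well-known open problem, and the available partial results (conjugacy of maximal decent tori, Fr\'econ's Hall theory inside connected solvable groups, and \cite[Theorem 4]{BC_semisimple}) do not help: the last of these applies only to groups already known to be of $p^\perp$ type, which is precisely what you are trying to establish, so invoking it would be circular. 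Consequently the inference ``$S^\circ = 1$, hence $G$ has no $p$-unipotent subgroup and no $p$-torus'' is unsupported: your Case B hypothesis is a property of the single subgroup $S^\circ$, not of $G$, and only conjugacy would transfer it to all of $G$. The paper closes this case by citing \cite{BBC} (``$S^\circ \neq 1$''), i.e.\ the theorem that a nontrivial maximal $p$-subgroup of a connected group of finite Morley rank cannot be finite; that is a substantial genericity theorem, not something recoverable from Fact \ref{f:torality} plus an unproved conjugacy statement.

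Two smaller points. First, the same unavailable conjugacy underlies ``Since $G$ has no $p$-torus'' in your Case B2, but that case is repairable without it: once $S \leq N_G(B)$, Fact \ref{unipotentselfnormalization} already forces every $p$-element of $N_G(B)$ into $B$, so $S \leq B$ is a maximal $p$-subgroup of the connected solvable group $B$ and is therefore connected --- there is no need to identify $S \cap B$ with $U_p(B)$, nor to know that $S^\circ$ is a maximal $p$-unipotent subgroup of $G$ (a claim you assert but do not prove; it does not follow just from maximality of $S$, though it can be derived after the fact). Second, your free use of the decomposition $S^\circ = T_p \ast U$ tacitly assumes the structure theory of $p$-subgroups, which for odd $p$ is available only for solvable (in particular locally finite) $p$-groups; this is why the paper first splits on $d(S) = G$ versus $d(S) < G$: in the latter case $S^\circ \leq d^\circ(S)$ is solvable by minimal simplicity and the decomposition applies, while in the former case $S = S \cap d^\circ(S) = S^\circ$ is connected outright and there is nothing to prove.
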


\begin{proof}
Let $S$ be a maximal $p$-subgroup of $G$.
If $d(S) = G$, then $S$ is connected.

So we may assume $d(S) < G$. In this case, $S^\o \leq d^\o (S)$ which is solvable,
 and as usual $S^\o = T \ast U$ with $T$ a $p$-torus and $U$ $p$-unipotent.
Also $S^\o \neq 1$ by \cite{BBC}.

Suppose first that $U \neq 1$.
Then there is a unique Borel subgroup $B$ containing $U$.
If $s \in S$, Fact \ref{jaligot} forces $B^s = B$, that is $s \in N(B)$.
But $s \in B$ by Fact \ref{unipotentselfnormalization}.
As $S \leq B$ is a Sylow $p$-subgroup of the connected solvable group $B$,
 it is connected.

Suppose otherwise that $U = 1$.
So $S^\o = T$ and $S$ is toral-by-finite.
Here the result follows by Theorem \ref{thm:Weyl_cyclic}.
\end{proof}

\section{Counting Borel subgroups}\label{sec:Counting_Borel}

We conclude by noticing that only finitely many Borel subgroups
 contain the Carter subgroup of a minimal connected simple group
 with a ``large" decent torus.
If there is a Weyl group $W$, this number is either
 a multiple of $|W|$, or simply 1.

\begin{lemma}
Let $G$ be any group of finite Morley rank.
Fix a Carter subgroup $Q$ of $G$ and a Borel subgroup $B$ of $G$ containing $Q$. 
Then there are finitely many $G$-conjugates of $B$ containing $Q$.
\end{lemma}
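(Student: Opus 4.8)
The statement to prove is: in any group $G$ of finite Morley rank, given a Carter subgroup $Q$ contained in a Borel subgroup $B$, only finitely many $G$-conjugates of $B$ contain $Q$. The plan is to set up a counting argument in $N_G(Q)$, exploiting that $Q$ is almost self-normalizing (by \cite[Theorem 6.16]{BN}) together with conjugacy of Carter subgroups inside a connected solvable group, which forces every conjugate of $B$ containing $Q$ to arise from an element normalizing $Q$.

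\textbf{Key steps.} First I would observe that if $B^g \geq Q$, then $Q$ is a Carter subgroup of $B^g$: indeed $Q$ is a Carter subgroup of $G$, hence self-normalizing up to finite index, so it is nilpotent and almost self-normalizing inside the connected solvable group $B^g$, which is exactly the definition of a Carter subgroup there. Now $Q^{g^{-1}}$ is a Carter subgroup of $B$ as well (since $Q \leq B^g$ gives $Q^{g^{-1}} \leq B$), and Carter subgroups of the connected solvable group $B$ are conjugate by an element of $B$ (this is \Frecon's theorem, \cite{Frecon_Carter_conjugacy}, specialized to the solvable case, which is classical). So there is $b \in B$ with $Q^{g^{-1}} = Q^{b}$, i.e.\ $gb^{-1} \in N_G(Q)$. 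Writing $n = g b^{-1}$, we have $B^g = B^{bn}$, and since $b \in B$, this equals $B^{n}$. The upshot is that every conjugate of $B$ containing $Q$ is already of the form $B^n$ for some $n \in N_G(Q)$.

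\textbf{Reducing to $N_G(Q)$.} Thus the map $n \mapsto B^n$ from $N_G(Q)$ surjects onto the set of $G$-conjugates of $B$ that contain $Q$, and it is easy to check these are genuinely conjugates containing $Q$ (since $Q = Q^n \leq B^n$). The fibers are cosets of $N_{N_G(Q)}(B) = N_G(Q) \cap N_G(B)$, so the number of such conjugates equals the index $[N_G(Q) : N_G(Q) \cap N_G(B)]$. It therefore suffices to show this index is finite. Now $N_G(Q)$ is almost self-normalizing, but more to the point, $N_G^\o(Q) = Q$ up to finite index; in fact since $Q$ is a Carter subgroup, $[N_G(Q):Q]$ is finite. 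Because $Q \leq B \cap B^n$ for the relevant $n$, and $Q \leq N_G(Q) \cap N_G(B)$, the quotient $[N_G(Q) : N_G(Q)\cap N_G(B)]$ is bounded by $[N_G(Q):Q]$, which is finite. Hence only finitely many conjugates of $B$ contain $Q$.

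\textbf{Main obstacle.} The delicate point is justifying that $Q$ is a Carter subgroup of each $B^g$ containing it, and then invoking conjugacy of Carter subgroups \emph{within the solvable group} $B$ to absorb the conjugating element into $B$; this is what collapses the a priori $G$-indexed family of conjugates down to an $N_G(Q)$-indexed one. Once that reduction is in place, the finiteness is immediate from almost self-normalization of the Carter subgroup. I would expect the proof to be short, with the only real content being the reduction to $N_G(Q)$ via the Frattini-style argument and Carter conjugacy in $B$.
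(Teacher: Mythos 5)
Your proof is correct, but it takes a genuinely different route from the paper's. The paper proves finiteness by a rank double-count on the definable incidence set $\mathcal{S} := \{(x,y) \in G/N_G(Q) \times G/N_G(B) : Q^x \leq B^y\}$: the fibers of the projection to $G/N_G(Q)$ all have the same rank $k$, which is exactly the quantity to be shown zero, while the fibers of the projection to $G/N_G(B)$ have rank $\rk B - \rk N_B(Q)$ --- by the same key fact you use, namely that a $G$-conjugate of $Q$ lying in a Borel subgroup is a Carter subgroup of it, and Carter subgroups of a connected solvable group are conjugate in it. Equating the two computations of $\rk \mathcal{S}$ and using $\rk N_G(Q) = \rk Q = \rk N_B(Q)$ together with $\rk N_G(B) \geq \rk B$ forces $k \leq 0$, hence $k = 0$. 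You feed the same input into a Frattini-style argument instead: every $g$ with $Q \leq B^g$ factors as $g = bn$ with $b \in B$, $n \in N_G(Q)$, so the conjugates of $B$ containing $Q$ are exactly the $B^n$ with $n \in N_G(Q)$, and there are $[N_G(Q) : N_G(Q) \cap N_G(B)] \leq [N_G(Q) : Q] < \infty$ of them. Your route is more elementary (no rank computation at all) and buys an explicit bound: the count is at most the number of connected components of $N_G(Q)$, i.e.\ the order of $N(Q)/Q$. This is precisely the kind of ``effective version'' the paper remarks is possible, and it meshes with the paper's final corollary that this count is a multiple of $|W|$ or equal to $1$; the paper's version buys brevity and avoids all coset bookkeeping.

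Speaking of bookkeeping, one small slip to fix: from $Q^{g^{-1}} = Q^{b}$ you get $bg \in N_G(Q)$, not $gb^{-1}$. Choose instead $b \in B$ with $Q^{g^{-1}} = Q^{b^{-1}}$ and set $n := b^{-1}g \in N_G(Q)$; then $g = bn$ and $B^g = (B^b)^n = B^n$, as needed. With your definition $n = gb^{-1}$ one gets $g = nb$ and only $B^g = (B^n)^b$, which is not yet of the desired form. This is a two-line repair and does not affect the structure or validity of your argument.
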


\begin{proof}
Consider the set $\mathcal{S} := \{ (x, y) : x \in G/N_G(Q), y \in G/N_G (B), Q^x \leq B^y\}$.

The first projection has fibers $\mathcal{S}_x = \{ y \in G/N_G (B): Q^x \leq B^y\}$. These sets are uniformly definable and have the same rank, say $k$.

The second projection has fibers $\mathcal{S}_y = \{ x \in G/N_G (Q): Q^x \leq B^y\}$, which are uniformly definable.  Since solvable groups of finite Morley rank conjugate their Carter subgroups, $\mathcal{S}_y$ has rank $\rk B - \rk N_B (Q)$.

Putting this together, we find
$$\rk \mathcal{S} = \rk G - \rk N_G (Q) + k = \rk G - \rk N_G (B) + \rk B - \rk N_B (Q),$$
whence $k = 0$.
\end{proof}

We exploit the theory of maximal intersections of Borel subgroups \cite{Bu_Bender}
 to bound the number of conjugacy classes (see also \cite[Proposition 5.5.3]{DeloroPhd}).

\begin{lemma}
Let $G$ be a minimal simple connected group of finite Morley rank.
Suppose that for some prime number $q$,
 there is a $q$-torus of \Prufer-rank at least two.
Then a Carter subgroup of $G$ is included in
 finitely many non-conjugate Borel subgroups.
\end{lemma}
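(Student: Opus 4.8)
The goal is to bound the number of $G$-conjugacy classes of Borel subgroups containing a fixed Carter subgroup $Q$. Combined with the previous lemma (finitely many conjugates of each $B$ contain $Q$), this will yield finitely many Borel subgroups containing $Q$. Since the previous lemma handles the count \emph{within} a single conjugacy class, the plan is to show that only finitely many conjugacy classes of Borel subgroups can contain $Q$.

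\medskip

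The plan is to exploit the machinery of maximal intersections of Borel subgroups from \cite{Bu_Bender}, together with the hypothesis that $G$ contains a $q$-torus of \Prufer-rank at least two. First I would fix a maximal decent torus $T$; by conjugacy I may assume $Q = C(T)$ is a Carter subgroup containing $T$, and by hypothesis $\pr_q(T) \geq 2$. The key structural input is that every Borel subgroup $B$ containing $Q$ contains $T$, and hence $Q \leq B \cap B'$ for any two such Borels $B, B'$. The idea is to attach to each such Borel subgroup $B$ a unipotence parameter (a maximal one, say $\tilde q_B$), and to distinguish two cases: those Borels for which $Q = B$ is itself nilpotent (in which case, by conjugacy of Carter subgroups, $B = Q$ and there is essentially one such), and those where $B$ is strictly larger than $Q$ so that $Q$ sits inside a proper intersection $B \cap B'$.

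\medskip

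The heart of the argument should proceed through the classification of maximal intersections. Each Borel subgroup $B \supsetneq Q$ containing $Q$ gives rise, via its unipotent radical and the action of $T$ of \Prufer-rank $\geq 2$, to a field (by Zilber's field theorem \cite[Theorem 9.1]{BN}, applied to a $B$-minimal subgroup of $U_p(B)$ on which $T$ acts nontrivially, as in the proof of Corollary \ref{BW}), and hence to a definable decent subtorus of $T$ acting nontrivially. The crucial point is that for $\pr_q(T)\ge 2$ there is a \Prufer rank $1$ subtorus $T_1 \leq T$ with $C(T_1) \leq B$; this pins $B$ down in terms of the \emph{finitely many} definable rank-one subtori that arise, or more precisely in terms of the finitely many maximal unipotence parameters and the finitely many conjugacy classes of maximal intersections catalogued in \cite{Bu_Bender}. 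Since the theory of maximal intersections of Borel subgroups produces only finitely many conjugacy classes of such maximal intersections, and each containing Borel is determined up to the data we have extracted, we obtain finitely many conjugacy classes.

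\medskip

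The main obstacle I expect is controlling the Borels $B$ containing $Q$ for which the associated intersection $B \cap B'$ is \emph{not} maximal, or for which the field-theoretic argument does not immediately localize $B$: one must ensure that distinct conjugacy classes of containing Borels correspond to genuinely distinct pieces of finite data (distinct unipotence parameters, or distinct maximal-intersection conjugacy classes), rather than accumulating infinitely. Handling the degenerate possibility that $Q$ itself equals one of the Borels, versus the generic case where $Q$ is a proper Carter subgroup of a larger $B$, and then uniformly bounding the field characteristics $p$ that can occur (each prime $p$ appearing in some $U_p(B) \neq 1$ for a containing Borel is constrained by Corollary \ref{BW} and the \Prufer-rank hypothesis), is where the care must go. I would organize the finiteness as: finitely many primes $p$ with $U_p(B) \neq 1$ possible, finitely many maximal unipotence parameters per such prime, and finitely many conjugacy classes of maximal intersections realizing each, invoking Fact \ref{Marseilles} to recover $B$ from its normal unipotent data.
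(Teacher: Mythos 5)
Your proposal assembles the right toolkit (the maximal-intersection theory of \cite{Bu_Bender}, unipotence parameters, the Pr\"ufer rank hypothesis), but the step that is supposed to deliver finiteness is not a theorem: you invoke ``the finitely many conjugacy classes of maximal intersections catalogued in \cite{Bu_Bender}'' and conclude that each containing Borel ``is determined up to the data we have extracted.'' No such catalogue exists in \cite{Bu_Bender}; that theory analyzes the structure of a single maximal non-abelian intersection and lets one compare \emph{two} Borel subgroups through their unipotence parameters, but it does not bound the number of conjugacy classes of maximal intersections --- such a bound is essentially the statement you are asked to prove, so the core of your argument is circular. Two subsidiary steps are also unsound. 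First, your application of Zilber's field theorem to a $B$-minimal subgroup of $U_p(B)$ presupposes $U_p(B)\neq 1$ for some \emph{prime} $p$; the present lemma is not restricted to degenerate type (nor does degenerate type help here), and a Borel subgroup may carry only characteristic-zero unipotence $U_{(0,r)}$, in which case this step never gets started. Second, ``finitely many definable rank-one subtori'' is false: a $q$-torus of Pr\"ufer rank $\geq 2$ has infinitely many rank-one subtori (already $\Z(q^\infty)^2$ has uncountably many), so nothing can be pinned down by them; the correct finite invariant is the set of subgroups of order $q$ in $\Omega_1(T_q)$, of which there are $1+q+\dots+q^{d-1}$.

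The paper's proof supplies exactly the mechanism your sketch lacks, and it is a proof by contradiction rather than a classification. It fixes representatives $t_1,\dots,t_k$ of the finitely many lines of $\Omega_1(T_q)$ and, once and for all, Borel subgroups $\beta_\ell \geq C^\o(t_\ell)$ with maximal unipotence parameters $\tilde q_\ell$. Assuming infinitely many pairwise non-conjugate Borel subgroups $B_i \geq Q$, none conjugate to any $\beta_\ell$, the generation principle (Fact \ref{Cgeneration}) yields $B_i = \gen{ (B_i\cap\beta_\ell)^\o : \ell = 1,\dots,k }$, so at least two intersections $H_{i,1}$, $H_{i,2}$ are distinct and non-abelian, hence maximal non-abelian intersections by \cite[Theorem 4.3]{Bu_Bender}. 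The finiteness then comes from a pigeonhole through \cite[Lemma 3.30]{Bu_Bender}: if two of the $B_i$ had maximal parameter below $\tilde q_1$ they would be conjugate, so after thinning one may assume $\tilde p_i > \tilde q_1,\tilde q_2$ for all $i$; this forces $\beta_1$ and $\beta_2$ to be conjugate and makes $r' = \rr_0(H_{i,\ell}')$ independent of $i$ and $\ell$; finally $U_{(0,r')}(Q)$ is nontrivial and central in every $H_{i,\ell}$, so that $C^\o(U_{(0,r')}(Q))$ lies in a unique Borel subgroup, which must therefore equal every $B_i$ --- a contradiction. If you want to repair your outline, this contradiction engine (fixed reference Borels attached to the lines of $\Omega_1(T_q)$, generation, then parameter comparison) is what must replace the appeal to a nonexistent classification of maximal intersections.
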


\begin{proof}
Let $T_q$ be a $q$-torus of \Prufer $q$-rank $d \geq 2$, and
let $k = 1 + \dots + q^{d-1}$ be the number of lines
 in $\Omega_1(T_q) := \gen{ t \in T_q \mid |t| = q }$.
Let $t_1, \dots, t_{k}$ be vectors of $\Omega_1(T_q)$ representing these lines.
For $\ell = 1, \dots, k$, fix some Borel subgroup $\beta_\ell \geq C^\o(t_\ell)$.
Also let $\tilde{q}_\ell$ be maximal unipotence parameters for $\beta_\ell$
 ($\ell = 1, \dots, k$).

Suppose towards a contradiction that there is an infinite set $I$ with
 pairwise non-conjugate Borel subgroups $B_i$ for $i \in I$,
 all of which contain $Q$.
Since the $B_i$'s are pairwise non-conjugate,
 we may assume that none of the $B_i$ is conjugate to any of the $\beta_\ell$.
For each $i$, choose a maximal unipotence parameter $\tilde{p}_i$ for $B_i$.

Fix some $i$.  Then $B_i \geq Q \geq T_q$.
So Fact \ref{Cgeneration} implies
 $B_i = \gen{ C_{B_i}^\o (t_\ell) : \ell = 1, \dots, k }
 = \gen{ (B_i \cap \beta_\ell)^\o : \ell = 1, \dots, k }$.
It follows that at least two of the intersections
 $H_{i, \ell} = (B_i \cap \beta_\ell)^\o$ are distinct and non-abelian.

So we may assume that for all $i \in I$,
 $H_{i, 1}$ and $H_{i, 2}$ are distinct and non-abelian.
Since $B_i$ is never conjugate to $\beta_1$ nor to $\beta_2$,
 every $H_{i, 1}$ and every $H_{i, 2}$ is a maximal non-abelian intersection
 \cite[Theorem 4.3]{Bu_Bender}.
The same result tells us that one cannot have
 $\tilde{p}_i = \tilde{q}_1$ nor $\tilde{p}_i = \tilde{q}_2$.

If there are indices $i$ and $j$ such that
 $\tilde{q}_1 > \tilde{p}_i$ and $\tilde{q}_1 > \tilde{p}_j$,
then \cite[Lemma 3.30]{Bu_Bender} says that $B_i$ is conjugate to $B_j$,
 a contradiction.
So infinitely often, it must be the case that $\tilde{p}_i > \tilde{q}_1$.

Similarly we may assume that $\tilde{p}_i > \tilde{q}_2$ for all $i \in I$.
So \cite[Lemma 3.30]{Bu_Bender} says that $\beta_1$ is conjugate to $\beta_2$.
Since $F_s (\beta_\ell)$ is not abelian only when $s = \rr_0(H_{i, \ell}')$
 by \cite[Theorem 4.5 (4)]{Bu_Bender}, we find that
$r' = \rr_0 (H_{i, \ell}')$ does not depend on $i \in I$ nor on $\ell = 1, 2$.

Set $Q_{r'} := U_{(0, r')} (Q)$.
Then, by \cite[Lemma 3.23]{Bu_Bender}, $Q_{r'}$ is non-trivial and
 central in $H_{i, \ell}$, for all $i \in I$ and $\ell = 1, 2$.
Now $ \gen{ H_{i, 1} , H_{i, 2} } \leq C^\o (Q_{r'})$.

Recall that $B_i$ and $\beta_1$ are the only Borel subgroups containing $H_{i, 1}$
 and that $B_i$ and $\beta_2$ are the only Borel subgroups containing $H_{i, 2}$.
As $H_{i, 1} \neq H_{i, 2}$, it follows that $\beta_1 \neq \beta_2$.
Then $B_i$ is the only Borel subgroup containing $C^\o (Q_{r'})$.
In particular, $B_j = B_i$ for any $j \in I$, a contradiction.
\end{proof}

We note that an effective version of this result is possible, but
 the obvious bound does not seem useful at present.

\begin{corollary}
Let $G$ be a minimal connected simple group of finite Morley rank.
Suppose there is a $q$-torus of \Prufer $q$-rank $\geq 2$ for some prime number $q$,
 and a non-trivial Weyl group.
Then the number of Borel subgroups containing a fixed Carter subgroup of $G$
 is a multiple of $|W|$ or possibly just 1.
\end{corollary}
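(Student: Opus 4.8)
The plan is to combine the two preceding lemmas of this section with the cyclicity result (Theorem \ref{thm:Weyl_cyclic}) and an action of the Weyl group on the relevant set of Borel subgroups. First I would fix a Carter subgroup $Q$ of $G$; since there is a nontrivial decent torus (a $q$-torus of \Prufer rank $\geq 2$ exists), $Q$ contains a maximal decent torus $T$ and $W = N(Q)/Q \cong N(T)/C(T)$ by the remarks following Lemma \ref{lem:Weyl_increase}. The second lemma of this section tells us that $Q$ lies in only finitely many \emph{conjugacy classes} of Borel subgroups, and the first lemma tells us that each such class contributes only finitely many Borels containing $Q$; hence the set $\mathcal{B}_Q$ of \emph{all} Borel subgroups containing $Q$ is finite. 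The task is then to understand the integer $|\mathcal{B}_Q|$ modulo $|W|$.

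The key idea is that $N(Q)$ acts on the finite set $\mathcal{B}_Q$ by conjugation: if $B \geq Q$ and $n \in N(Q)$, then $B^n \geq Q^n = Q$, so $B^n \in \mathcal{B}_Q$. Since $Q$ normalizes every $B \in \mathcal{B}_Q$ by Fact \ref{selfnormsolv} (indeed $Q$ is a Carter subgroup of the connected solvable $B$, hence self-normalizing in $B$, and $Q \leq B \leq N(B)$ would otherwise contradict triviality of Weyl groups in solvable groups), the action of $N(Q)$ factors through $W = N(Q)/Q$. Thus $W$ acts on $\mathcal{B}_Q$, and $|\mathcal{B}_Q|$ is the sum of the sizes of the $W$-orbits. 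By the orbit-stabilizer theorem each orbit has size dividing $|W|$, so to conclude that $|\mathcal{B}_Q|$ is a multiple of $|W|$ it suffices to show that every orbit is \emph{free}, i.e. that no nontrivial element of $W$ can stabilize a Borel subgroup in $\mathcal{B}_Q$.

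The main obstacle, and the crux of the proof, is precisely this freeness: I must show that if $w \in W$ fixes some $B \in \mathcal{B}_Q$, then $w = 1$. Here I would invoke the cyclic generator $w$ of Theorem \ref{thm:Weyl_cyclic}, lifted to a Weylian $\tau'$-element; a stabilizing Weyl element would lift to a torsion element $x \in N(T) \setminus C(T)$ normalizing $B$ and also normalizing (indeed centralizing) $T \leq Q \leq B$, which is exactly the configuration forbidden by Facts \ref{selfnormsolv} and \ref{unipotentselfnormalization} together with the uniqueness statements of Fact \ref{jaligot}: a nontrivial Weyl element cannot simultaneously live inside a Borel subgroup (violating solvable triviality, Fact \ref{selfnormsolv}) nor normalize it from outside while the torus sits inside (the argument sketched in the introduction for Corollary \ref{BW}*). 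Should the action fail to be free, the only escape is that $\mathcal{B}_Q$ is a single fixed point, giving $|\mathcal{B}_Q| = 1$; this accounts for the ``or possibly just $1$'' clause. So the two cases are: either $W$ acts freely, forcing $|W| \mid |\mathcal{B}_Q|$, or the unique Borel containing $Q$ is $W$-invariant and $|\mathcal{B}_Q| = 1$.
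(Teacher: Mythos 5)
Your overall architecture matches the paper's: finiteness of the set $\mathcal{B}_Q$ of Borel subgroups containing $Q$ via the two preceding lemmas, an action of the Weyl group on $\mathcal{B}_Q$ (the paper conjugates by the lifted subgroup $\hat{W} \leq G$ provided by Theorem \ref{thm:Weyl_cyclic}, you factor the $N(Q)$-action through $Q$ --- the same action), and the dichotomy ``free action, hence a multiple of $|W|$, or else $|\mathcal{B}_Q| = 1$.'' The gap lies in the only step carrying real content: freeness. You claim that a nontrivial Weyl element fixing some $B \in \mathcal{B}_Q$ is ``exactly the configuration forbidden by Facts \ref{selfnormsolv} and \ref{unipotentselfnormalization}'' in the style of the introduction's exercise. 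That argument does not transfer. First, Fact \ref{unipotentselfnormalization} only excludes $x \in N(B) \setminus B$ when $U_p(B) \neq 1$ for the relevant prime $p$, and for an arbitrary Borel subgroup $B \geq Q$ you know nothing about $U_p(B)$; in the introduction's exercise, $B$ was produced by Fact \ref{f:Weyl_p} precisely so that $U_p(B) \neq 1$. Second, and more fundamentally, the configuration is not forbidden at all: if $Q$ is itself a (necessarily nilpotent) Borel subgroup, then every element of $\hat{W} \leq N(T) \leq N(Q)$ normalizes the Borel subgroup $Q \in \mathcal{B}_Q$. So no contradiction argument can possibly succeed, and your sentence ``the only escape is that $\mathcal{B}_Q$ is a single fixed point'' asserts the needed dichotomy rather than proving it; as written, your argument would ``prove'' that the action is always free, which is false.

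What actually closes this step in the paper is Proposition \ref{BTnilp}: if a Weylian $\tau'$-element (and the nontrivial elements of $\hat{W}$ are such, being powers of the generator constructed in \S\ref{sec:Cyclicity}) normalizes a Borel subgroup $B \geq C(T) = Q$, then $B$ is nilpotent and equals $C(T)$; hence $B = Q$, and then $\mathcal{B}_Q = \{Q\}$ by maximality of Borel subgroups. That proposition is a substantive result --- its proof analyzes the intersection $(B \cap B_a)^\o$, the decomposition into $C_H(a) \oplus [H,a]$, and so on --- and is not a consequence of Facts \ref{selfnormsolv}, \ref{unipotentselfnormalization}, and \ref{jaligot} alone. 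A smaller slip: your lift $x$ lies in $N(T) \setminus C(T)$, so it normalizes but certainly does not centralize $T$.
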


\begin{proof}
This number is finite by the two preceding lemmas.
By Theorem \ref{thm:Weyl_cyclic},
 we lift $W$ to an isomorphic subgroup $\hat{W} \leq G$, and
 let $\hat{W}$ act on a $G$-conjugacy class of Borel subgroups containing $Q$.
By a Frattini argument, $\hat{W}$ acts transitively on
 each $G$-conjugacy class of such Borel subgroups.
If some $a \in \hat{W}$ fixes a Borel subgroup $B \geq Q$,
 then $B = Q$ by Proposition \ref{BTnilp}.
So unless $Q$ is a Borel subgroup,
 $\hat{W}$ acts freely on each conjugacy class.
\end{proof}

\small
\bibliographystyle{alpha} 
\bibliography{fMr,burdges-articles,burdges-preprints,burdges-books}

\end{document}